\newcommand{\banacha}{X}
\newcommand{\argmin}{argmin }
\newtheorem{theorem}{Theorem}
\newtheorem{lemma}[theorem]{Lemma}
\newtheorem{definition}{Definition}
\newtheorem{corollary}[theorem]{Corollary}
\newtheorem{proposition}[theorem]{Proposition}
\newtheorem{remark}{Remark}
\newtheorem{algorithm}{Algorithm}
\begin{document}
\title{Convergence of the Gauss-Newton method for convex composite optimization under a majorant condition}

\author{ O. P. Ferreira\thanks{IME/UFG, Campus II- Caixa
    Postal 131, CEP 74001-970 - Goi\^ania, GO, Brazil (E-mail:{\tt
      orizon@mat.ufg.br}). The author was supported in part by
    CNPq Grant 473756/2009-9, CNPq Grant 302024/2008-5, PRONEX--Optimization(FAPERJ/CNPq) and FUNAPE/UFG.}
   \and  M. L. N. Gon\c calves \thanks{IME/UFG, Campus II- Caixa
    Postal 131, CEP 74001-970 - Goi\^ania, GO, Brazil (E-mail:{\tt
      maxlng@mat.ufg.br}). The author was supported in part by CAPES and
    CNPq Grant 473756/2009-9.} \and P. R. Oliveira \thanks{COPPE-Sistemas, Universidade Federal do Rio de Janeiro,
Rio de Janeiro, RJ 21945-970, BR (Email: {\tt poliveir@cos.ufrj.br}).
This author was supported in part by CNPq.} }

\maketitle
\begin{abstract}

Under the hypothesis that an initial point is a quasi-regular point, we use a majorant condition 
to present a new semi-local convergence analysis of an extension of the Gauss-Newton method for solving  convex composite optimization problems.
In this analysis the conditions and proof of convergence are simplified by using a simple majorant condition to define  regions where a Gauss-Newton sequence is ``well behaved".

\noindent
  \textsc{AMSC: 47J15, 65H10.}

\end{abstract}
\section{Introduction}\label{sec:int}

Consider the convex composite optimation problem
\begin{equation}\label{eq:p}
\min \; h(F(x)),
\end{equation}
where  $h:\mathbb{R}^{m}\to \mathbb{R}$ is a real-valued convex and $F:\mathbb{R}^{n}\to \mathbb{R}^{m}$ is continuously differentiable.
As it is well known, see  \cite{burke,chon10,chon2002} and references therein,  a wide variety of applications with  this formulation can be found in mathematical programming literature, e.g.,  nonlinear inclusions,
penalization methods, minimax, and goal programming. Besides its practical applications, this model provides a convenient tool for the study
of first and second order  optimality conditions in constrained optimization.


The basic algorithm considered in \cite{burke,chon10,chon2002}, which is an extension of the Gauss-Newton method for solving nonlinear least square problem, will be considered in this paper.   The study of  \eqref{eq:p} is related to the convex inclusion problem
\begin{equation}\label{eq:cc}
F(x)\in C:=\{ z\in \mathbb{R}^{m} : h(z)\leq h(x), \, x\in \mathbb{R}^{m} \},
\end{equation}
because if $x_*\in \mathbb{R}^{n}$ satisfies the convex inclusion \eqref{eq:cc} then $x_*$ is a solution of \eqref{eq:p},
but if $x_*\in \mathbb{R}^{n}$ is a solution of \eqref{eq:p} it does not necessarily satisfy the inclusion  convex \eqref{eq:cc}.
Although a priori, our  goal is to  give criteria that ensure the convergence of the sequence generated by the Gauss-Newton algorithm for a solution of  \eqref{eq:p}, we  will  give a criteria that ensure the convergence of that  sequence for some $x_*\in \mathbb{R}^{n}$  satisfying  $F(x_*)\in C$ which,  in particular,  solves~\eqref{eq:p}.


In this paper, we are interested in the semi-local convergence analysis, i.e., based on the information at an initial point, criteria are given that ensure the convergence of the sequence generated by the Gauss-Newton algorithm for some $x_*\in \mathbb{R}^{n}$  with $F(x_*)\in C$. Under the  hypothesis that the initial point is a quasi-regular point of the inclusion
\eqref{eq:cc}, we use a majorant condition similar to the one used in \cite{Max2,FS02,FS01} to present  a new semi-local convergence analysis for the sequence generated by the Gauss-Newton algorithm.  The convergence analysis presented here communicates the conditions and  proof in quite a simple manner. This is possible thanks to our majorant condition and a demonstration techinique in which instead of only looking to the generated sequence, we identify regions where the Gauss-Newton sequence (for the convex composite optimation problem) is well behaved, as compared with Newton method applied to an auxiliary function associated with the majorant function. This technique was introduced in~\cite{FS02}.


The convergence of the sequence generated by the Gauss-Newton algorithm was also studied in \cite{burke,chon10,chon2002}. Among these, the criterion introduced by Li and Ng in \cite{chon10} is the best. Besides the technique used in the demonstration, the main difference from our analysis regarding \cite{chon10} is that they used
Wang's condition, introduced in  \cite{XW10}, in place of our majorant condition.
But, the formulation using the majorant condition provides a clear relationship between the majorant function and the nonlinear function $F$ under consideration. Besides this, the majorant condition simplifies the  proof of convergence.

The organization of our paper is as follows. In section \ref{aux}, we list some notation and one basic result. The Gauss-Newton algorithm is discussed  in Section \ref{sec:int.100},
   in Section \ref{sec:int.1} we present some regularity properties, and  an analysis of the
majorant and auxiliary functions is established in Section~\ref{mc}.
In Section \ref{lkant} the main result is stated and in Section \ref{sec:convxk} it is proved. Some applications of this result
are given in Section~\ref{sec:ec}.

\subsection{Notation and auxiliary result}\label{aux}

The following notation and result are used throughout our presentation.
Let $\mathbb{R}^n$ be with a norm $\| \cdot\|$.   The open and closed ball in $\mathbb{R}^n$ with center $x$ and radius $r$ are denoted, respectively by
$B(x,r) \;  \mbox{and}\;B[x,r] $.
The polar of a closed convex  $W \subset \mathbb{R}^n$ is the set
$
W^{o}:=\{z \in \mathbb{R}^n: \langle z,w \rangle\leq 0, \forall w \in W\}.
$
The distance from a point  $x$ to a set $W \subset \mathbb{R}^n$ is given by
$
d(x,W):=\inf \{\|x-w\| : w\in W \}.
$
The  set of all subsets of $\mathbb{R}^n$  is denoted by  $P(\mathbb{R}^n)$ and  $Ker(A)$ represents the kernel
 of the linear map~A.  Finally, the sum of a point  $x \in \mathbb{R}^n$ with a set $X\in P(\mathbb{R}^n) $ is the set  given by $y+\banacha=\{y+x: x\in\banacha\}$.

The following auxiliary result of elementary convex analysis will be needed:
\begin{proposition}\label{pr:conv.aux1}
Let $I\subset \mathbb{R}$ be an interval, and $\varphi:I\to \mathbb{R}$ be convex.
 If $u,v,w\in I$, $u<w$, and $u\leq v\leq w$ then
$$  \varphi(v)-\varphi(u) \leq \left[\varphi(w)-\varphi(u)\right] \frac{v-u}{w-u}.$$
\end{proposition}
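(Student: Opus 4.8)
This is a standard "three-point" or "slope" inequality for convex functions. The plan is to express the middle point $v$ as a convex combination of the endpoints $u$ and $w$ and then apply the definition of convexity directly. Since $u \le v \le w$ with $u < w$, we may write $v = (1-\lambda)u + \lambda w$ where $\lambda = (v-u)/(w-u) \in [0,1]$; note this quotient is well-defined precisely because $u < w$, and $1-\lambda = (w-v)/(w-u)$.

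Next I would invoke convexity of $\varphi$ on the interval $I$ (which contains $u,v,w$, hence their convex combinations) to get
$$
\varphi(v) = \varphi\bigl((1-\lambda)u + \lambda w\bigr) \le (1-\lambda)\varphi(u) + \lambda\varphi(w).
$$
Subtracting $\varphi(u) = (1-\lambda)\varphi(u) + \lambda\varphi(u)$ from both sides yields
$$
\varphi(v) - \varphi(u) \le \lambda\bigl(\varphi(w) - \varphi(u)\bigr) = \bigl[\varphi(w)-\varphi(u)\bigr]\frac{v-u}{w-u},
$$
which is exactly the claimed inequality.

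There is essentially no obstacle here; the only point requiring a moment's care is the degenerate case $v = u$ or $v = w$, but in both cases the inequality holds trivially (it becomes $0 \le 0$ and $\varphi(w)-\varphi(u) \le \varphi(w)-\varphi(u)$ respectively), and these are already covered by allowing $\lambda \in \{0,1\}$. The hypothesis $u < w$ is used solely to ensure the denominator $w-u$ does not vanish.
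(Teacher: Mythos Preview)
Your proof is correct and complete. The paper does not actually prove this proposition at all; it simply cites Theorem~4.1.1 of Hiriart-Urruty and Lemar\'echal's \emph{Convex Analysis and Minimization Algorithms~I}. Your direct argument via the convex-combination definition is the standard one and is more self-contained than the paper's bare citation, so there is nothing to critique.
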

\begin{proof}
See Theorem 4.1.1 on p.21 of \cite{Lem1}.
\end{proof}

\section{Preliminary } \label{sec:int.100}

In this section we present the algorithm to solve problem \eqref{eq:p}, a brief study of regularity, and
an analysis of our majorant and auxiliary functions. The results of this section are the main tools used in the proof of  convergence of the sequence generated by the
Gauss-Newton algorithm.

  In order to state the  Gauss-Newton algorithm, for solving problem \eqref{eq:p}, we need the following definition:  For  $\Delta \in (0,+\infty)$ and \(x\in \mathbb{R}^{n}\) define
 \begin{equation}\label{eq:mps}
D_{\Delta}(x):= \argmin \left\{ h(F(x)+F'(x)d): \,  d \in \mathbb{R}^n, \, \|d\| \leq \Delta \right\},
\end{equation}
 that is,   $D_{\Delta}(x)$  is   the solution set for the following  problem
\begin{equation}\label{eq:mp}
\min \left\{h(F(x)+F'(x)d): d\in \mathbb{R}^n, \, \|d\| \leq \Delta \right\}.
\end{equation}
Given that  $\Delta \in (0,+\infty]$,  $\eta \in [1,+\infty)$   and  a point    $x_0 \in \mathbb{R}^n$,  {\it the Gauss-Newton type  algorithm}  associated with \((\Delta, \eta, x_0)\)  as defined  in  \cite{burke} (see also,  \cite{chon10,chon2002})  is as follows:
\begin{algorithm} \label{algor1}
\hfill \vspace{.3cm}

\noindent
{\sc Initialization.} Take  $\Delta \in (0,+\infty]$,  $\eta \in [1,+\infty)$   and   $x_0 \in \mathbb{R}^n$. Set $k=0$.\\
{\sc Stop criterion.}  Compute $D_{\Delta}(x_k)$. If $0 \in D_{\Delta}(x_k),$  STOP. Otherwise.\\
{\sc Iterative Step.} Compute $d_k$ satisfying
$$d_k\in D_{\Delta}(x_k), \qquad \|d_k\| \leq \eta  d(0,D_{\Delta}(x_k)),$$
and set
$$x_{k+1}=x_k+d_k,$$
\(k=k+1\) and GO TO {\sc Stop criterion}.
\end{algorithm}
Note that, since  \eqref{eq:mp} is a convex optimization problem in a compact set, it follows that
 the set  $D_{\Delta}(x)$ is nonempty, for all \(x\in\mathbb{R}^n\). Therefore, the  sequence $\{x_k\}$
generated by {\bf  Algorithm~\ref{algor1}} is well defined.

\subsection{Regularity} \label{sec:int.1}
In this section  we state the hypothesis on the starting point of the sequence generated  by {\bf  Algorithm~\ref{algor1}}, which we need in our analysis, as well as some related concepts.

 Let C be as defined in \eqref{eq:cc}, that is,  C is the set of all minimum points of $h$.  For each \(x\in \mathbb{R}^n\),  we  define the set \( D_C(x)\) associated to \(C\)  as
 $$
 D_C(x):=\{d\in \mathbb{R}^n:F(x)+F'(x)d \in C\}.
 $$
In the next proposition we state a relation  between the sets   \(D_\Delta(x)\) and \(D_C(x)\).
\begin{proposition}\label{DX}
 Let  \(x\in \mathbb{R}^n\). If  $D_C(x)\neq \emptyset$ and  $ d(0,D_C(x))\leq \Delta,$  then
\[
D_\Delta(x)=\{ d\in \mathbb{R}^n:\|d\|\leq \Delta, F(x)+F'(x)d\in C\}\subset D_C(x).
\]
As a consequence,  \( d(0,D_{\Delta}(x))=d(0,D_C(x)).\)
\end{proposition}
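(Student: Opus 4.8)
The plan is to identify $C$ with the set of global minimizers of $h$ and, under the hypotheses, to exhibit a point feasible for \eqref{eq:mp} that already attains the global minimum of $h$; this pins the optimal value of \eqref{eq:mp} to $\min h$ and turns the inclusion $F(x)+F'(x)d\in C$ into a characterization of optimality.

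First I would record the elementary facts about $C$. Since $h$ is convex and finite on all of $\mathbb{R}^m$ it is continuous, so $C$ (the set of minimizers of $h$, i.e. the sublevel set of $h$ at its minimum value) is closed; writing $\mu:=\inf_{z\in\mathbb{R}^m} h(z)$ we have $C=\{z\in\mathbb{R}^m:h(z)=\mu\}$ and $h(z)\ge\mu$ for all $z$, with $\mu$ finite because $D_C(x)\neq\emptyset$ forces $C\neq\emptyset$. Next I would observe that $D_C(x)=\{d:F(x)+F'(x)d\in C\}$ is the preimage of the closed set $C$ under the continuous affine map $d\mapsto F(x)+F'(x)d$, hence closed; being nonempty by hypothesis, the infimum defining $d(0,D_C(x))$ is attained, say at $d^{*}\in D_C(x)$ with $\|d^{*}\|=d(0,D_C(x))\le\Delta$.

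The core step is then immediate: $d^{*}$ satisfies $\|d^{*}\|\le\Delta$, so it is feasible for \eqref{eq:mp}, and $h(F(x)+F'(x)d^{*})=\mu$ is the smallest value $h$ attains anywhere; hence $d^{*}$ is a global solution of \eqref{eq:mp} and the optimal value of \eqref{eq:mp} equals $\mu$. Consequently a vector $d$ with $\|d\|\le\Delta$ lies in $D_{\Delta}(x)$ if and only if $h(F(x)+F'(x)d)=\mu$, i.e. if and only if $F(x)+F'(x)d\in C$; this is exactly the asserted description of $D_{\Delta}(x)$, and since that description merely appends the constraint $\|d\|\le\Delta$ to the definition of $D_C(x)$, it yields $D_{\Delta}(x)\subset D_C(x)$.

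For the consequence on distances, from $D_{\Delta}(x)\subset D_C(x)$ we get $d(0,D_{\Delta}(x))\ge d(0,D_C(x))$ (an infimum over a smaller set is no smaller), while $d^{*}\in D_{\Delta}(x)$ with $\|d^{*}\|=d(0,D_C(x))$ gives $d(0,D_{\Delta}(x))\le d(0,D_C(x))$; hence the two distances coincide. I do not expect a genuine obstacle here: the only point needing care is the attainment of $d(0,D_C(x))$, which rests on the closedness of $D_C(x)$ and is precisely what makes the boundary case $d(0,D_C(x))=\Delta$ go through; the remainder is bookkeeping with the definitions of $D_{\Delta}(x)$ and of $C$.
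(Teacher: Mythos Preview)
Your argument is correct and follows essentially the same route as the paper's proof: exhibit a point $d^{*}\in D_C(x)$ with $\|d^{*}\|\le\Delta$, observe that it forces the optimal value of \eqref{eq:mp} to be $\mu=\min h$, and read off the description of $D_{\Delta}(x)$ together with the equality of distances. The only noteworthy difference is that you take the extra care to justify attainment of $d(0,D_C(x))$ via closedness of $D_C(x)$, a point the paper uses tacitly (writing ``there exists $\bar d\in D_C(x)$ such that $\|\bar d\|=d(0,D_C(x))$'') without spelling it out; this makes your handling of the boundary case $d(0,D_C(x))=\Delta$ cleaner.
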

\begin{proof}
By definition of  \(C\) in \eqref{eq:cc} and  \( D_\Delta(x)\) in \eqref{eq:mps} it can be seen that
 \[
  \{ d\in \mathbb{R}^n:\|d\|\leq \Delta, F(x)+F'(x)d\in C\}\subset D_\Delta(x).
  \]
Let \( d\in  D_\Delta(x)\).  Since $D_C(x)\neq \emptyset$ and $ d(0,D_C(x))\leq \Delta,$  there  exists  \(\bar d\in D_C(x)\) such that  \(\|\bar d\|\leq \Delta\) and \( F(x)+F'(x)\bar d\in C\).
Hence,  from the definition of  \(C\) in \eqref{eq:cc} and  \( D_\Delta(x)\) in \eqref{eq:mps}  we obtain  \(\bar d\in D_\Delta(x)\). Therefore,  as  \( \bar d,  d\in  D_\Delta(x)\), and using again the definition of \( D_\Delta(x)\) in \eqref{eq:mps},  we have
 \[
 h(F(x)+F'(x)d) = h(F(x)+F'(x)\bar d).
 \]
Now, using   \( F(x)+F'(x)\bar d\in C\), the last equality and definition of  \(C\), we obtain   \(
F(x)+F'(x)d\in C\),  which proves the first statement. The second statement, i.e., \(D_\Delta(x)\subset D_C(x)\)  can be seen by definition of $D_C(x)$.
 To conclude  the proof,  first note that the inclusion  \(D_\Delta(x)\subset D_C(x)\)  implies that
 \begin{equation} \label{eq:idx}
 d(0,D_{\Delta}(x))\geq d(0,D_C(x)).
 \end{equation}
Since $D_C(x)\neq \emptyset$ and  $ d(0,D_C(x))\leq \Delta,$ there exists \(\bar d\in D_C(x)\)  such that
 \[
 \| \bar d\|=d(0,D_C(x))\leq \Delta.
 \]
 Hence, from  definition of  \(C\) in \eqref{eq:cc} and  \( D_\Delta(x)\) in \eqref{eq:mps}  we  conclude that \(\bar d\in D_\Delta(x)\).  Therefore,
  \[
  d(0,D_{\Delta}(x))\leq \|\bar d\|=d(0,D_C(x))
  \]
 and taking into account \eqref{eq:idx}, the proof is concluded.
\end{proof}

\begin{definition}\label{qr}
Let $F:\mathbb{R}^n \to \mathbb{R}^{m}$ be a continuously differentiable
function and  let $h:\mathbb{R}^{m}\to \mathbb{R}$ be a real-valued convex function.
A point  $x_0 \in \mathbb{R}^n$  is called a quasi-regular point of the inclusion \eqref{eq:cc}, that is,  of the inclusion
\[
F(x)\in C:=\{ z\in \mathbb{R}^{m} : h(z)\leq h(x), \, x\in \mathbb{R}^{m} \},
\]
if  ${r} \in (0,+\infty)$ exists as well as an increasing positive-valued  function    $\beta : [0, {r} )\to (0, +\infty )$ such that
\begin{equation}\label{eq:dqr}
 D_C(x)\neq \emptyset,  \quad \quad d(0, D_C(x))\leq  \beta (\|x-x_0\|)d(F(x),C),  \qquad \forall x \in B(x_0,{r}).
\end{equation}
\end{definition}
Let $x_0 \in \mathbb{R}^n$  be a quasi-regular point of the inclusion \eqref{eq:cc}.  We denote $r_{x_0}$   the supremum of ${r}$ such that \eqref{eq:dqr}  holds for some increasing positive-valued function
$\beta$ on $[0,{r})$, that is,
\begin{equation} \label{eq:qrr}
r_{x_0}:=\sup \left\{ r : \exists \,  \beta : [0, r )\to (0, +\infty ) \, \mbox{ satisfying } \,  \eqref{eq:dqr} \right\}.
\end{equation}
Let $r \in [0,r_{x_0})$.  The set  ${\cal B}_{r}(x_0)$ denotes the set of all increasing positive-valued functions
$\beta$ on $[0,r)$ such that  \eqref{eq:dqr}  holds, that is,
\[
{\cal B}_{r}(x_0):=  \left\{  \beta : [0, r )\to (0, +\infty ) : \beta  \, \mbox{ satisfying } \,   \eqref{eq:dqr} \right\}.
\]
Define
\begin{equation} \label{eq:qrf}
\beta_{x_0}(t):=\inf \left\{\beta(t): \beta \in {\cal B}_{r_{x_0}}(x_0)\right\}, \quad \quad t \in [0, r_{x_0}).
\end{equation}
The number  $r_{x_0}$ and  the function $\beta_{x_0}$ are called, respectively,   the {\it quasi-regular radius} and the {\it quasi-regular bound function} of the quasi-regular point $x_0$.
\begin{remark}\label{rem:1}
Note that from the definition of  $r_{x_0}$  and  $\beta_{x_0}$ it is easy to prove that  for all \( r \leq  r_{x_0}  \)  such that  \( \lim_ { t \to r ^{-}} \beta (t)<+ \infty \) it holds that
$$
\beta_{x_0}(t)=\inf \left\{\beta(t): \beta \in {\cal B}_{r }(x_0)\right\}, \qquad t \in [0, r ).
$$
\end{remark}
\subsection{The majorant condition } \label{mc}
In this section, we define the  majorant condition for the nonlinear function $F$, which relaxes the assumption of Lipschitz
continuity to $F'$, used in our analysis.   We   present an analysis of the behavior of the majorant
 function and  of a certain associated auxiliary function -  more  details about the majorant condition can be found in  \cite{Max2,FS02,FS01}.
\begin{definition}\label{majo}
Let $R>0$, $x_0 \in \mathbb{R}^n$ and  $F:\mathbb{R}^{n}\to \mathbb{R}^{m}$ be continuously  differentiable.
 A   twice-differentiable function $f:[0,\; R)\to \mathbb{R}$  is a majorant function  for  the function  \(F\) on $B(x_0,R)$ if it satisfies
  \begin{equation}\label{KH}
  \|F'(y)-F'(x)\| \leq f'(\|y-x\|+\|x-x_0\|)-f'(\|x-x_0\|),
  \end{equation}
   for any $x,y\in B(x_0, R)$,  $\|x-x_0\|+\|y-x\|< R$,  and moreover,
  \begin{itemize}
  \item[{\bf h1)}] $f(0)=0$, $f'(0)=-1$;
  \item[{\bf h2)}] $f'$ is convex and strictly increasing.
  \end{itemize}
\end{definition}
In the next result we  bound the linearization error of  the function $F$  by the error in the linearization on the  majorant function.
\begin{lemma}  \label{pr:taylor}
Take
$$
   x,y\in B(x_0, R) \quad\mbox{and}\quad 0\leq t<v<R.
$$
If $\|x-x_0\|\leqslant t$ and $\|y-x\|\leqslant v-t$, then

\[
\|F(y)-[ F(x)+F'(x)(y-x)\|\leqslant f(v)-[f(t) +f'(t)(v-t)]\left(\frac{\|y-x\|}{v-t}\right)^2.
\]
\end{lemma}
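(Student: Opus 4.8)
The plan is to combine the integral (Taylor) form of the linearization error of $F$ with the majorant condition \eqref{KH}, and then to absorb the resulting scalar estimate into the quantity $f(v)-[f(t)+f'(t)(v-t)]$ by two monotonicity arguments coming from the convexity of $f'$. We may assume $y\neq x$, the case $y=x$ being trivial; set $b:=\|y-x\|\in(0,v-t]$ and $a:=\|x-x_0\|\le t$.

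First I would write
$$F(y)-[F(x)+F'(x)(y-x)]=\int_0^1\bigl[F'(x+\tau(y-x))-F'(x)\bigr](y-x)\,d\tau,$$
so that, taking norms,
$$\bigl\|F(y)-[F(x)+F'(x)(y-x)]\bigr\|\le b\int_0^1\bigl\|F'(x+\tau(y-x))-F'(x)\bigr\|\,d\tau.$$
For each $\tau\in[0,1]$ the points $x$ and $x+\tau(y-x)$ lie in $B(x_0,R)$ and satisfy $\|x-x_0\|+\tau\|y-x\|\le a+\tau b\le t+(v-t)=v<R$, so the majorant condition \eqref{KH} applies and gives $\|F'(x+\tau(y-x))-F'(x)\|\le f'(a+\tau b)-f'(a)$.

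Next I would reduce the base point from $a$ to $t$: since $f'$ is convex and $a\le t$, the secant slope of $f'$ over an interval of fixed length $\tau b$ is nondecreasing, hence $f'(a+\tau b)-f'(a)\le f'(t+\tau b)-f'(t)$; this follows from Proposition \ref{pr:conv.aux1} (applied to $f'$, twice, and added). Substituting this estimate and using $\int_0^1 f'(t+\tau b)\,d\tau=[f(t+b)-f(t)]/b$ yields
$$\bigl\|F(y)-[F(x)+F'(x)(y-x)]\bigr\|\le f(t+b)-[f(t)+f'(t)b].$$
Writing $\psi(u):=f(u)-[f(t)+f'(t)(u-t)]$, the right-hand side is $\psi(t+b)$, and it remains only to show $\psi(t+b)\le\psi(v)\,(b/(v-t))^2$, which gives the claimed bound since $b=\|y-x\|$ and $t+b\le v$.

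This last inequality is the main obstacle, and it is where Proposition \ref{pr:conv.aux1} does the real work. Observe that $\psi(t)=0$ and $\psi'=f'-f'(t)$ is convex with $\psi'(t)=0$, so $\psi\ge0$ on $[t,R)$. I would prove that $u\mapsto\psi(u)/(u-t)^2$ is nondecreasing on $(t,R)$, which yields the claim with $u_1=t+b\le v=u_2$. For fixed $t<u_1\le u_2<R$, set $\lambda:=(u_1-t)/(u_2-t)\in(0,1]$; since $\psi'$ is convex and $\psi'(t)=0$, the convex-combination inequality gives $\psi'(t+\lambda(r-t))\le\lambda\,\psi'(r)$ for $r\in[t,u_2]$, and integrating over $r\in[t,u_2]$ after the change of variables $s=t+\lambda(r-t)$ (so $\psi(u_1)=\lambda\int_t^{u_2}\psi'(t+\lambda(r-t))\,dr$) gives $\psi(u_1)\le\lambda^2\psi(u_2)$, i.e.\ $\psi(u_1)/(u_1-t)^2\le\psi(u_2)/(u_2-t)^2$. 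Combining the displays proves the lemma. The only points requiring care are the verification that \eqref{KH} is legitimately applicable for every $\tau\in[0,1]$ — here the strict inequality $v<R$ is essential — and the base-point bookkeeping in the reduction from $a$ to $t$; the remaining manipulations are routine.
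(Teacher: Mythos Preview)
Your proof is correct and follows essentially the same pattern that the paper invokes by reference to Lemma~7 of \cite{FS01}: the integral Taylor remainder, the majorant inequality \eqref{KH} applied along the segment, the shift of base point from $a=\|x-x_0\|$ to $t$ via convexity of $f'$, and a second convexity argument to produce the factor $(\|y-x\|/(v-t))^2$. The only organisational difference is in this last step: the Ferreira--Svaiter argument typically applies Proposition~\ref{pr:conv.aux1} to $f'$ \emph{inside} the integral, bounding $f'(t+\tau b)-f'(t)\le \dfrac{b}{v-t}\,[f'(t+\tau(v-t))-f'(t)]$ and then integrating in $\tau$, whereas you first integrate to obtain $\psi(t+b)$ and then prove directly that $u\mapsto\psi(u)/(u-t)^2$ is nondecreasing. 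The two routes are equivalent and rest on the same convexity facts; your version has the minor advantage of isolating the scalar inequality $\psi(t+b)\le(b/(v-t))^2\psi(v)$ as a clean stand-alone statement.
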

\begin{proof}
The proof follows the same pattern as Lemma 7 from \cite{FS01}.
 \end{proof}
To state our main theorem we need  a certain auxiliary function associated with the majorant function.
We shall see later that the  sequence generated by {\bf Algorithm \ref{algor1}} will be ``majorized " by the Newton sequence associated with this auxiliary function.

Let   $f:[0,\; R)\to \mathbb{R}$  be a  majorant
function for the function $F$ on  $B(x_0, R)$. Take $ \xi>0$,  $\alpha>0$  and define  the { \it auxiliary  function}
  \begin{equation} \label{defh}
  \begin{array}{rcl}
f_{ \xi, \alpha} :[0, R)&\to& \mathbb{R}\\
    t&\mapsto& \xi+(\alpha-1)t+\alpha f(t).
  \end{array}
\end{equation}
Now,  consider the following conditions on the auxiliary  function   $ f_{ \xi,\alpha}$:
    \begin{itemize}
  \item[{\bf h3)}]  there exists  $t_* \in (0,  R)$ such that   $f_{ \xi, \alpha}(t)>0$   for all  $t \in (0,  t_*)$ and $f_{ \xi, \alpha}(t_*)=0$;
    \end{itemize}
  \begin{itemize}
  \item[{\bf h4)}] $f_{\xi, \alpha}'(t_*)<0$.
  \end{itemize}
From now on, we assume that  $f:[0,\; R)\to \mathbb{R}$ is a  majorant
function for the function $F$ on  $B(x_0, R)$ and that {\bf h3} holds.  The assumption {\bf h4} will be considered to hold only when explicitly stated.
\begin{proposition} \label{pr:10} The following statements  hold:
\begin{itemize}
 \item[{\bf i)}]  $f_{\xi,\alpha}(0)=\xi >0$,   $f'_{\xi, \alpha}(0)=-1$;
  \item[{\bf  ii)}]  $f'_{\xi,\alpha}$ is convex and strictly increasing.
  \end{itemize}
\end{proposition}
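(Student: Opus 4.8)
The plan is to argue directly from the closed form \eqref{defh}, using only the normalization \textbf{h1} and the convexity/monotonicity assumption \textbf{h2} on the majorant function $f$; elementary one-variable calculus suffices and there is no genuine obstacle.

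First I would compute the derivative of $f_{\xi,\alpha}$. Differentiating $f_{\xi,\alpha}(t)=\xi+(\alpha-1)t+\alpha f(t)$ gives $f_{\xi,\alpha}'(t)=(\alpha-1)+\alpha f'(t)$ on $[0,R)$, which is legitimate since $f$ is (twice) differentiable there. For statement \textbf{i)}, I evaluate at $t=0$: using $f(0)=0$ from \textbf{h1} yields $f_{\xi,\alpha}(0)=\xi+(\alpha-1)\cdot 0+\alpha\cdot 0=\xi$, which is strictly positive because the auxiliary function is defined only for $\xi>0$; and using $f'(0)=-1$ from \textbf{h1} yields $f_{\xi,\alpha}'(0)=(\alpha-1)+\alpha(-1)=-1$.

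For statement \textbf{ii)}, the observation is that $f_{\xi,\alpha}'$ is obtained from $f'$ by multiplying by the positive scalar $\alpha$ and adding the constant $\alpha-1$. Since by \textbf{h2} the function $f'$ is convex and strictly increasing, and since multiplication by a positive constant together with addition of a constant preserves both convexity and strict monotonicity, it follows that $f_{\xi,\alpha}'$ is convex and strictly increasing. The only point requiring a moment's care is the sign bookkeeping in the identity $f_{\xi,\alpha}'(0)=-1$, which relies precisely on the normalization $f'(0)=-1$ supplied by \textbf{h1}.
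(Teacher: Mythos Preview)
Your proposal is correct and takes essentially the same approach as the paper, which simply records that the statements follow from the definition \eqref{defh} together with \textbf{h1} and \textbf{h2}. You have merely spelled out the one-line verification in full detail.
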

\begin{proof}
Seen from the definition in \eqref{defh} and assumptions  {\bf h1} and {\bf h2}.
\end{proof}
\begin{proposition} \label{pr:1}
 The function $f_{ \xi, \alpha}$  is strictly convex, and
  \begin{equation} \label{eq:n.f}
    f_{ \xi, \alpha}(t)>0, \quad f_{ \xi, \alpha}'(t)<0, \qquad t<t-f_{ \xi, \alpha}(t)/f_{ \xi, \alpha}'(t)< t_*,
    \qquad\qquad  \forall ~t\in [0,t_*) .
  \end{equation}
  Moreover,  $f_{ \xi, \alpha}'(t_*)\leq 0$.
\end{proposition}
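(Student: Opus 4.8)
The plan is to derive everything from the two facts recorded in Proposition~\ref{pr:10}, namely that $f_{\xi,\alpha}(0)=\xi>0$, $f'_{\xi,\alpha}(0)=-1$, and $f'_{\xi,\alpha}$ is convex and strictly increasing, together with hypothesis {\bf h3}. First I would note that strict convexity of $f_{\xi,\alpha}$ is immediate: $f'_{\xi,\alpha}$ is strictly increasing by Proposition~\ref{pr:10}(ii), so $f_{\xi,\alpha}$ is strictly convex on $[0,R)$. Next I would establish the pointwise sign statements in \eqref{eq:n.f}. The inequality $f_{\xi,\alpha}(t)>0$ for $t\in[0,t_*)$ is exactly part of {\bf h3} combined with $f_{\xi,\alpha}(0)=\xi>0$. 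For $f'_{\xi,\alpha}(t)<0$ on $[0,t_*)$, I would argue by convexity: if $f'_{\xi,\alpha}(\bar t)\geq 0$ for some $\bar t\in[0,t_*)$, then since $f'_{\xi,\alpha}$ is increasing it would be nonnegative on $[\bar t,t_*)$, so $f_{\xi,\alpha}$ would be nondecreasing there; but $f_{\xi,\alpha}(\bar t)>0$ and $f_{\xi,\alpha}(t_*)=0$ force $f_{\xi,\alpha}$ to strictly decrease somewhere on $[\bar t,t_*]$ (or at least to reach $0$ from a positive value), a contradiction. More cleanly: $f_{\xi,\alpha}(t_*)=0<f_{\xi,\alpha}(t)$ for $t<t_*$ means the difference quotient $\bigl(f_{\xi,\alpha}(t_*)-f_{\xi,\alpha}(t)\bigr)/(t_*-t)<0$, and by convexity $f'_{\xi,\alpha}(t)\leq \bigl(f_{\xi,\alpha}(t_*)-f_{\xi,\alpha}(t)\bigr)/(t_*-t)<0$.

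Having $f_{\xi,\alpha}(t)>0$ and $f'_{\xi,\alpha}(t)<0$ on $[0,t_*)$, the Newton iterate $n(t):=t-f_{\xi,\alpha}(t)/f'_{\xi,\alpha}(t)$ is well defined, and since we are subtracting a negative quantity, $n(t)>t$. It remains to prove $n(t)<t_*$, i.e. $t-f_{\xi,\alpha}(t)/f'_{\xi,\alpha}(t)<t_*$. Multiplying through by the negative number $f'_{\xi,\alpha}(t)$ (which reverses the inequality), this is equivalent to $f_{\xi,\alpha}(t) > f'_{\xi,\alpha}(t)(t_*-t)$, i.e. $f_{\xi,\alpha}(t) + f'_{\xi,\alpha}(t)(t_*-t) > 0$. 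Since $0=f_{\xi,\alpha}(t_*)$, this reads $f_{\xi,\alpha}(t) + f'_{\xi,\alpha}(t)(t_*-t) > f_{\xi,\alpha}(t_*)$, which is precisely the statement that the tangent line to $f_{\xi,\alpha}$ at $t$ lies strictly above the graph at $t_*$ — and that is a direct consequence of strict convexity (the tangent line underestimates the function, with strict inequality away from the point of tangency since $t\neq t_*$). So this step follows from strict convexity applied at the points $t$ and $t_*$.

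Finally, for $f'_{\xi,\alpha}(t_*)\leq 0$, I would take the limit $t\to t_*^-$ in the inequality $f'_{\xi,\alpha}(t)<0$ just established; since $f'_{\xi,\alpha}$ is continuous (indeed $f$ is twice differentiable), the limit is $f'_{\xi,\alpha}(t_*)\leq 0$. The main obstacle — really the only place requiring care — is getting the direction of the inequality right in the $n(t)<t_*$ step when multiplying by $f'_{\xi,\alpha}(t)<0$; once that is reformulated as the tangent-line-above-graph statement, strict convexity closes it immediately. Everything else is a direct reading-off of {\bf h3} and Proposition~\ref{pr:10}.
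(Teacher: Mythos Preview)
Your overall strategy is exactly the standard one (and is what the paper intends by pointing to \cite{FS01}): strict convexity from the strict monotonicity of $f'_{\xi,\alpha}$, positivity from {\bf h3}, negativity of the derivative from the convex secant inequality, and the Newton-step bound from the tangent-line inequality for strictly convex functions. The final claim $f'_{\xi,\alpha}(t_*)\le 0$ by continuity is fine.

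There is, however, a sign slip precisely at the place you flagged as delicate. Starting from $-f_{\xi,\alpha}(t)/f'_{\xi,\alpha}(t) < t_*-t$ and multiplying by $f'_{\xi,\alpha}(t)<0$ gives $-f_{\xi,\alpha}(t) > f'_{\xi,\alpha}(t)(t_*-t)$, i.e.
\[
f_{\xi,\alpha}(t) + f'_{\xi,\alpha}(t)(t_*-t) \;<\; 0 \;=\; f_{\xi,\alpha}(t_*),
\]
not the reverse inequality you wrote. In words, what you need is that the tangent line at $t$ lies \emph{strictly below} the graph at $t_*$, which is exactly the strict-convexity inequality you then invoke (``the tangent line underestimates the function''). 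So your verbal justification is the correct one; only the displayed inequality and the phrase ``tangent line \ldots lies strictly above the graph'' need to be flipped. With that correction the argument is complete and matches the paper's intended proof.
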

\begin{proof}
Using Proposition \ref{pr:10}, the proof follows the same pattern as Proposition 3 from \cite{FS01}.
\end{proof}
In view of the second inequality in (\ref{eq:n.f}), the Newton iteration map is
well defined in $[0,t_*)$.
 Let us call it
\begin{equation} \label{eq:n.f.2}
  \begin{array}{rcl}
  n_{f_{\xi, \alpha}}:[0,t_*)&\to& \mathbb{R}\\
    t&\mapsto& t-f_{ \xi, \alpha}(t)/f_{\xi,\alpha}'(t).
  \end{array}
\end{equation}
\begin{proposition} \label{pr:1.5}
  For  each $ t\in [0,t^*)$ it holds that $\xi \leq n_{f_{ \xi, \alpha}}(t)$.
  \end{proposition}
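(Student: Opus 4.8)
The plan is to deduce the bound from a single monotonicity property of the Newton iteration map. First I would record the value at the origin: by Proposition~\ref{pr:10} we have $f_{\xi,\alpha}(0)=\xi$ and $f'_{\xi,\alpha}(0)=-1$, so $n_{f_{\xi,\alpha}}(0)=0-\xi/(-1)=\xi$. It therefore suffices to show that $n_{f_{\xi,\alpha}}$ is nondecreasing on $[0,t_*)$, since then $n_{f_{\xi,\alpha}}(t)\ge n_{f_{\xi,\alpha}}(0)=\xi$ for every $t\in[0,t_*)$, which is exactly the claim.

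To prove the monotonicity I would differentiate the iteration map. Because $f$ is twice differentiable by Definition~\ref{majo} and $f'_{\xi,\alpha}$ does not vanish on $[0,t_*)$ by Proposition~\ref{pr:1}, the function $n_{f_{\xi,\alpha}}(t)=t-f_{\xi,\alpha}(t)/f'_{\xi,\alpha}(t)$ is differentiable on $[0,t_*)$, and a direct computation yields
$$
n_{f_{\xi,\alpha}}'(t)=1-\frac{f'_{\xi,\alpha}(t)^{2}-f_{\xi,\alpha}(t)f''_{\xi,\alpha}(t)}{f'_{\xi,\alpha}(t)^{2}}=\frac{f_{\xi,\alpha}(t)\,f''_{\xi,\alpha}(t)}{f'_{\xi,\alpha}(t)^{2}}.
$$
I would then check the sign of each factor on $[0,t_*)$ using the results already available: $f_{\xi,\alpha}(t)>0$ by Proposition~\ref{pr:1} (equivalently {\bf h3}); $f''_{\xi,\alpha}(t)=\alpha f''(t)\ge 0$ because $f'_{\xi,\alpha}$ is strictly increasing by Proposition~\ref{pr:10}; and $f'_{\xi,\alpha}(t)^{2}>0$ since $f'_{\xi,\alpha}(t)<0$ on $[0,t_*)$ by Proposition~\ref{pr:1}. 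Hence $n_{f_{\xi,\alpha}}'(t)\ge 0$ on $[0,t_*)$, so $n_{f_{\xi,\alpha}}$ is nondecreasing and the proof is complete.

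There is no real obstacle here; the only point that needs attention is confirming that every regularity and sign fact invoked in the derivative computation --- differentiability of $n_{f_{\xi,\alpha}}$, nonvanishing of $f'_{\xi,\alpha}$, positivity of $f_{\xi,\alpha}$, and nonnegativity of $f''_{\xi,\alpha}$ --- is genuinely supplied by Propositions~\ref{pr:10} and~\ref{pr:1} on $[0,t_*)$, so that no hidden extra hypothesis creeps in. If one wished to avoid differentiating the quotient, an equivalent route is to observe that, after multiplying by $f'_{\xi,\alpha}(t)<0$, the claim $\xi\le n_{f_{\xi,\alpha}}(t)$ reads $g(t):=f_{\xi,\alpha}(t)-(t-\xi)f'_{\xi,\alpha}(t)\ge 0$; one checks $g(0)=0$, $g'(t)=(\xi-t)f''_{\xi,\alpha}(t)$ so that $g$ is nondecreasing on $[0,\xi]$ and nonincreasing on $[\xi,t_*]$, and $g(t_*)=-(t_*-\xi)f'_{\xi,\alpha}(t_*)\ge 0$ using $f'_{\xi,\alpha}(t_*)\le 0$ and $\xi<t_*$ from Proposition~\ref{pr:1}, so $g$ attains its minimum over $[0,t_*]$ at an endpoint and is nonnegative. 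I would present the monotone-iteration-map argument as the main proof, since it is the shorter of the two.
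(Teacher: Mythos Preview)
Your proof is correct, and it takes a genuinely different route from the paper's. The paper argues pointwise: it uses convexity of $f_{\xi,\alpha}$ to obtain the tangent-line inequality $t-\xi\ge -f_{\xi,\alpha}(t)$, substitutes this into $n_{f_{\xi,\alpha}}(t)-\xi$, and factors to get
\[
n_{f_{\xi,\alpha}}(t)-\xi\ge \frac{f_{\xi,\alpha}(t)}{-f'_{\xi,\alpha}(t)}\bigl(f'_{\xi,\alpha}(t)+1\bigr),
\]
which is nonnegative since $f'_{\xi,\alpha}$ is increasing with $f'_{\xi,\alpha}(0)=-1$. Your argument is more structural: you observe $n_{f_{\xi,\alpha}}(0)=\xi$ and then prove the Newton iteration map itself is nondecreasing on $[0,t_*)$ by computing $n_{f_{\xi,\alpha}}'(t)=f_{\xi,\alpha}(t)f''_{\xi,\alpha}(t)/f'_{\xi,\alpha}(t)^{2}\ge 0$. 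This monotonicity of $n_{f_{\xi,\alpha}}$ is a slightly stronger fact than the paper extracts here, and it dovetails nicely with Proposition~\ref{fcdecrescente} (which proves the companion statement that $-f_{\xi,\alpha}/f'_{\xi,\alpha}$ is decreasing via essentially the same derivative computation). The paper's approach, on the other hand, avoids any appeal to second derivatives and works purely from convexity and the sign information in Proposition~\ref{pr:1}. Both are short; yours gives a reusable monotonicity statement, the paper's is marginally lighter on regularity. Your alternative $g(t)$ argument is also correct, and you are right that $\xi<t_*$ can be read off from \eqref{eq:n.f} at $t=0$.
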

\begin{proof}
Proposition~\ref{pr:1} implies that $f_{\xi, \alpha}$ is convex. Hence, using the first item of Proposition~\ref{pr:10}
 it is easy to see, by using convexity properties,  that $t-\xi \geq - f_{ \xi, \alpha}(t)$. So,  the above definition  implies that
$$
 n_{f_{\xi, \alpha}}(t)-\xi=t- \frac{f_{\xi, \alpha}(t)}{f'_{\xi, \alpha}(t)}-\xi\geq -f_{\xi, \alpha}(t)- \frac{f_{\xi, \alpha}(t)}{f'_{\xi, \alpha}(t)}=\frac{f_{\xi, \alpha}(t)}{-f'_{\xi, \alpha}(t)}[f'_{\xi, \alpha}(t)+1], \qquad  \forall ~t\in [0,t_*) .
$$
Proposition~\ref{pr:10} implies that $f'_{\xi, \alpha}(0)=-1$ and   $f'_{\xi,\alpha}$ is strictly increasing. Thus, we obtain  $f'_{\xi, \alpha}(t)+1\geq 0$, for all $t\in [0,t_*)$.
Therefore,  combining the above inequality with the first two inequalities in Proposition~\ref{pr:1} the desired result follows.
\end{proof}
\begin{proposition} \label{pr:2}
  Newton iteration map  $n_{f_{ \xi, \alpha}}$  maps $[0,t^*)$ in
  $[0,t^*)$, and it holds that
  $$  t<n_{f_{ \xi, \alpha}}(t), \qquad    t_*-n_{f_{ \xi, \alpha}}(t)\leqslant  \frac{1}{2}(t_*-t) ,\qquad
    \forall\,t \in [0,t_*).$$
If ${f_{ \xi, \alpha}}$ also satisfies {\bf h4}, i.e., ${f_{ \xi, \alpha}'}(t_*)<0$, then
$$
t_*-n_{f_{ \xi, \alpha}}(t) \leq \frac{f_{ \xi, \alpha}''(t_*)}{-2f_{\xi,\alpha}'(t_*)}  (t_*-t)^2, \qquad \forall\,t\in [0,t_*).
$$
\end{proposition}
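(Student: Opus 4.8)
The plan is to exploit that $n_{f_{\xi,\alpha}}$ is exactly the Newton iteration for the strictly convex function $f_{\xi,\alpha}$, and to reduce everything to the convexity of $f'_{\xi,\alpha}$ via Proposition~\ref{pr:conv.aux1}. First I would record that $n_{f_{\xi,\alpha}}$ maps $[0,t_*)$ into itself and that $t<n_{f_{\xi,\alpha}}(t)$: both are immediate from the third chain of inequalities in \eqref{eq:n.f} of Proposition~\ref{pr:1}, namely $t<t-f_{\xi,\alpha}(t)/f'_{\xi,\alpha}(t)<t_*$ for all $t\in[0,t_*)$.

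For the linear estimate $t_*-n_{f_{\xi,\alpha}}(t)\leq \tfrac12(t_*-t)$, I would write
\[
t_*-n_{f_{\xi,\alpha}}(t)=t_*-t+\frac{f_{\xi,\alpha}(t)}{f'_{\xi,\alpha}(t)}
=\frac{(t_*-t)f'_{\xi,\alpha}(t)+f_{\xi,\alpha}(t)}{f'_{\xi,\alpha}(t)},
\]
and since $f_{\xi,\alpha}(t_*)=0$ by {\bf h3}, the numerator equals $f_{\xi,\alpha}(t)-f_{\xi,\alpha}(t_*)-f'_{\xi,\alpha}(t)(t-t_*)$, i.e.\ minus the first-order Taylor remainder of $f_{\xi,\alpha}$ at $t$ evaluated at $t_*$. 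Writing this remainder as an integral $\int_t^{t_*}\bigl(f'_{\xi,\alpha}(s)-f'_{\xi,\alpha}(t)\bigr)\,ds$ and applying Proposition~\ref{pr:conv.aux1} to the convex function $f'_{\xi,\alpha}$ (Proposition~\ref{pr:10}(ii)) with $u=t$, $v=s$, $w=t_*$ gives $f'_{\xi,\alpha}(s)-f'_{\xi,\alpha}(t)\leq \bigl(f'_{\xi,\alpha}(t_*)-f'_{\xi,\alpha}(t)\bigr)\tfrac{s-t}{t_*-t}$; integrating yields that the remainder is at most $\tfrac12(t_*-t)\bigl(f'_{\xi,\alpha}(t_*)-f'_{\xi,\alpha}(t)\bigr)$. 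Dividing by $-f'_{\xi,\alpha}(t)>0$ (Proposition~\ref{pr:1}) and using $f'_{\xi,\alpha}(t_*)\leq 0$ (last line of Proposition~\ref{pr:1}) to drop the $f'_{\xi,\alpha}(t_*)$ term gives the factor $\tfrac12$.

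For the quadratic estimate under {\bf h4}, I would start again from $t_*-n_{f_{\xi,\alpha}}(t)=\bigl[f_{\xi,\alpha}(t)-f_{\xi,\alpha}(t_*)-f'_{\xi,\alpha}(t)(t-t_*)\bigr]/f'_{\xi,\alpha}(t)$ and bound the Taylor remainder above using the second derivative: since $f'_{\xi,\alpha}$ is convex and increasing, $f''_{\xi,\alpha}$ is nondecreasing, so on $[t,t_*]$ one has $f'_{\xi,\alpha}(s)-f'_{\xi,\alpha}(t)\leq f''_{\xi,\alpha}(t_*)(s-t)$, and integrating over $s\in[t,t_*]$ bounds the remainder by $\tfrac12 f''_{\xi,\alpha}(t_*)(t_*-t)^2$. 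Dividing by $-f'_{\xi,\alpha}(t)$ and using that $-f'_{\xi,\alpha}$ is positive and decreasing on $[0,t_*]$ with value $-f'_{\xi,\alpha}(t_*)>0$ at $t_*$, so $-f'_{\xi,\alpha}(t)\geq -f'_{\xi,\alpha}(t_*)$... wait—$f'_{\xi,\alpha}$ is increasing, so $-f'_{\xi,\alpha}(t)\geq -f'_{\xi,\alpha}(t_*)$ indeed holds—gives $t_*-n_{f_{\xi,\alpha}}(t)\leq \tfrac{f''_{\xi,\alpha}(t_*)}{-2f'_{\xi,\alpha}(t_*)}(t_*-t)^2$, as claimed. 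The main obstacle is purely bookkeeping: keeping the signs straight (note $f'_{\xi,\alpha}<0$ throughout $[0,t_*)$, so dividing by it reverses inequalities) and justifying the monotonicity of $f''_{\xi,\alpha}$ from convexity of $f'_{\xi,\alpha}$; none of this is deep, and the whole argument is essentially the one used in Proposition~4 of \cite{FS01}, so I would simply invoke that reference for the routine parts.
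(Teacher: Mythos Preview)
Your proposal is correct and is precisely the argument of Proposition~4 in \cite{FS01}, which is all the paper itself invokes for this proof. Your sketch faithfully expands that reference---the Taylor-remainder identity together with the convexity estimate from Proposition~\ref{pr:conv.aux1} for the linear rate, and the monotonicity of $f''_{\xi,\alpha}$ for the quadratic rate---so nothing further is needed.
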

\begin{proof}
  The proof follows the same pattern as Proposition 4 of \cite{FS01}.
\end{proof}
The Newton sequence  $\{t_{k}\}$  for solving the equation  $f_{\xi, \alpha}(t)=0$  with starting point $t_0=0$   is defined as
\begin{equation}
  \label{eq:tknk}
  t_{0}=0,\quad t_{k+1}=n_{f_{\xi, \alpha}}(t_{k}), \qquad k=0,1,\ldots\, .
\end{equation}
Therefore, by also using Proposition~\ref{pr:2} it is easy to prove  that
\begin{corollary} \label{cr:kanttk}
  The sequence $\{t_{k}\}$ is well defined, is strictly increasing,
  is contained in $[0,t_*)$, and converges $Q$-linearly to $t_*$  as follows
$$
 t_*-t_{k+1}\leq \frac{1}{2}( t_*-t_{k}), \qquad   k=0,1,\,\ldots\, .
$$
If ${f_{ \xi, \alpha}}$ also satisfies assumption {\bf h4}, then $\{t_{k}\}$ converges $Q$-quadratically to $t_*$  as follows
$$
       t_*-t_{k+1} \leq \frac{f_{\xi,\alpha}''(t_*)}{-2 f'_{\xi,\alpha}(t_*)}(t_*-t_{k})^2,
    \quad  k=0,1,\dots\,.
  $$
\end{corollary}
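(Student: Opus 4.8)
The plan is to deduce everything from Proposition \ref{pr:2} by a routine induction on $k$, since that proposition already packages the two essential facts about the Newton map $n_{f_{\xi,\alpha}}$: that it sends $[0,t_*)$ into itself while increasing its argument, and that it contracts the distance to $t_*$ by at least a factor $1/2$ (quadratically, with the stated constant, when {\bf h4} holds). First I would check that $\{t_k\}$ is well defined: since $t_0 = 0 \in [0,t_*)$ and $n_{f_{\xi,\alpha}}$ maps $[0,t_*)$ into $[0,t_*)$ by Proposition \ref{pr:2}, an immediate induction shows $t_k \in [0,t_*)$ for all $k$, so each subsequent iterate $t_{k+1} = n_{f_{\xi,\alpha}}(t_k)$ is legitimately defined; this also gives containment in $[0,t_*)$.

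Next I would establish strict monotonicity: the inequality $t < n_{f_{\xi,\alpha}}(t)$ from Proposition \ref{pr:2}, applied at $t = t_k \in [0,t_*)$, yields $t_k < t_{k+1}$ for every $k$, so $\{t_k\}$ is strictly increasing. Being strictly increasing and bounded above by $t_*$, it converges to some limit $\ell \le t_*$. To pin down $\ell = t_*$ I would invoke the contraction estimate $t_* - n_{f_{\xi,\alpha}}(t) \le \tfrac12 (t_* - t)$ from Proposition \ref{pr:2} at $t = t_k$, which is exactly the claimed $Q$-linear bound
\[
t_* - t_{k+1} \le \tfrac12 (t_* - t_k), \qquad k = 0,1,\dots,
\]
and iterating it gives $t_* - t_k \le 2^{-k}(t_* - t_0) = 2^{-k} t_* \to 0$, so $t_k \to t_*$.

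Finally, under the additional hypothesis {\bf h4} (so $f'_{\xi,\alpha}(t_*) < 0$ and the constant $f''_{\xi,\alpha}(t_*)/(-2f'_{\xi,\alpha}(t_*))$ is a well-defined nonnegative number), the second estimate of Proposition \ref{pr:2} applied at $t = t_k$ gives directly
\[
t_* - t_{k+1} \le \frac{f''_{\xi,\alpha}(t_*)}{-2 f'_{\xi,\alpha}(t_*)} (t_* - t_k)^2, \qquad k = 0,1,\dots,
\]
which is the asserted $Q$-quadratic rate. There is no real obstacle here: the entire content has been front-loaded into Proposition \ref{pr:2}, and the corollary is just the observation that instantiating those pointwise statements along the orbit $\{t_k\}$ and performing a one-line induction produces the sequence-level conclusions; the only point requiring a word of care is noting that the limit of a bounded increasing sequence exists before using the linear estimate to identify it with $t_*$.
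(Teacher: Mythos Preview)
Your proposal is correct and matches the paper's approach exactly: the paper states only that the corollary follows ``by also using Proposition~\ref{pr:2},'' and your argument is precisely the routine induction along the orbit $\{t_k\}$ that this remark invites. The only minor redundancy is that once you have the contraction estimate $t_*-t_{k+1}\le\tfrac12(t_*-t_k)$ you do not need the monotone convergence step to identify the limit---iterating the estimate already forces $t_k\to t_*$---but this does no harm.
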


\begin{proposition}\label{fcdecrescente}
The map $[0,t_*) \ni t \to -f_{ \xi, \alpha}(t)/f_{ \xi, \alpha}'(t)$ is  decreasing.
\end{proposition}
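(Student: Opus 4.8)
The plan is to differentiate the map in question and reduce the whole statement to a single pointwise inequality. Write $g(t):=-f_{\xi,\alpha}(t)/f_{\xi,\alpha}'(t)$, so that $g(t)=n_{f_{\xi,\alpha}}(t)-t$; by Proposition~\ref{pr:1} we have $f_{\xi,\alpha}'(t)<0$ on $[0,t_*)$, hence $g$ is well defined and differentiable there, and a direct computation gives
\[
g'(t)=\frac{f_{\xi,\alpha}(t)\,f_{\xi,\alpha}''(t)-\big(f_{\xi,\alpha}'(t)\big)^2}{\big(f_{\xi,\alpha}'(t)\big)^2}, \qquad t\in[0,t_*).
\]
So the proposition is equivalent to the inequality $f_{\xi,\alpha}(t)\,f_{\xi,\alpha}''(t)\leq \big(f_{\xi,\alpha}'(t)\big)^2$ for all $t\in[0,t_*)$, and this is the only thing that really has to be established.

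To prove that inequality I would fix $t\in[0,t_*)$ and use convexity of $f_{\xi,\alpha}'$ (Proposition~\ref{pr:10}): being a convex, differentiable function, $f_{\xi,\alpha}'$ dominates its tangent at $t$, i.e.\ $f_{\xi,\alpha}'(s)\geq f_{\xi,\alpha}'(t)+f_{\xi,\alpha}''(t)(s-t)$ for $s\in[0,R)$. Integrating this from $t$ to $t_*$ and using $f_{\xi,\alpha}(t_*)=0$ from {\bf h3} yields
\[
0\ \geq\ f_{\xi,\alpha}(t)+f_{\xi,\alpha}'(t)(t_*-t)+\frac{1}{2}\,f_{\xi,\alpha}''(t)(t_*-t)^2 .
\]
Next I would read the right-hand side as the value at $z=t_*-t>0$ of the quadratic $P(z):=\frac{1}{2}f_{\xi,\alpha}''(t)\,z^2+f_{\xi,\alpha}'(t)\,z+f_{\xi,\alpha}(t)$, whose leading coefficient is $\geq 0$ (since $f_{\xi,\alpha}'$ is increasing) and whose constant term is $f_{\xi,\alpha}(t)>0$ by {\bf h3} when $t\in(0,t_*)$ and $=\xi>0$ when $t=0$. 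If $f_{\xi,\alpha}''(t)=0$ the desired inequality is immediate; if $f_{\xi,\alpha}''(t)>0$, then $P$ is an upward-opening parabola that attains a non-positive value (namely at $z=t_*-t$), so its discriminant is nonnegative, which is exactly $\big(f_{\xi,\alpha}'(t)\big)^2\geq 2\,f_{\xi,\alpha}(t)\,f_{\xi,\alpha}''(t)\geq f_{\xi,\alpha}(t)\,f_{\xi,\alpha}''(t)$. In either case $g'(t)\leq 0$, so $g$ is decreasing on $[0,t_*)$.

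The hard part is not any computation but spotting the right mechanism: assumption {\bf h3} must be used, because $f_{\xi,\alpha}f_{\xi,\alpha}''\leq (f_{\xi,\alpha}')^2$ is false for a generic strictly convex $f_{\xi,\alpha}$ with $f_{\xi,\alpha}'(0)=-1$, and it is precisely the existence of a zero $t_*$ of $f_{\xi,\alpha}$ to the right of $t$ that forces it. Encoding the second-order estimate at $t$, evaluated at $t_*$, as a quadratic in $z=t_*-t$ and reading off its discriminant is the device that converts ``$f_{\xi,\alpha}(t_*)=0$'' into the required bound; the monotonicity of $f_{\xi,\alpha}''$ coming from {\bf h2} is what makes the quadratic lower bound legitimate, and everything else is routine.
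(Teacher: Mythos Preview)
Your proof is correct. Both you and the paper begin identically: differentiate the quotient and reduce the proposition to the pointwise inequality $f_{\xi,\alpha}(t)f_{\xi,\alpha}''(t)\leq (f_{\xi,\alpha}'(t))^2$ on $[0,t_*)$. From there the routes diverge. The paper combines two separate convexity facts---the tangent inequality for $f_{\xi,\alpha}$ at $t$ evaluated at $t_*$ (giving $f_{\xi,\alpha}(t)<f_{\xi,\alpha}'(t)(t-t_*)$) and the tangent inequality for $f_{\xi,\alpha}'$ at $t$ evaluated at $t_*$ (giving $f_{\xi,\alpha}''(t)(t_*-t)\leq f_{\xi,\alpha}'(t_*)-f_{\xi,\alpha}'(t)$)---and then chains them algebraically, finishing with the additional fact $f_{\xi,\alpha}'(t_*)\leq 0$ from Proposition~\ref{pr:1}. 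You instead integrate the single tangent inequality for $f_{\xi,\alpha}'$ from $t$ to $t_*$, obtaining a second-order Taylor-type estimate, and read off the discriminant of the resulting quadratic in $z=t_*-t$. Your device is arguably cleaner: it uses one inequality instead of two, does not need $f_{\xi,\alpha}'(t_*)\leq 0$, and in fact delivers the sharper bound $(f_{\xi,\alpha}'(t))^2\geq 2\,f_{\xi,\alpha}(t)f_{\xi,\alpha}''(t)$. The paper's chain, on the other hand, is more transparent about which convexity is doing what and stays at first order throughout.
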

\begin{proof}
Proposition~\ref{pr:1} implies that   $f_{ \xi, \alpha}'(t)\neq 0$ for all $t\in [0,t_*)$. So, the function in the proposition is well defined.
As $f_{ \xi, \alpha}$ is twice-differentiable we have
$$\left(\frac{-f_{ \xi, \alpha}(t)}{f_{ \xi, \alpha}'(t)}\right)'=\frac{f_{ \xi, \alpha}(t)f_{ \xi, \alpha}''(t)-(f_{ \xi, \alpha}'(t))^2}{(f_{ \xi, \alpha}'(t))^2}, \qquad \forall \; t \in [0,t_*).$$
Hence, it suffices to show that
  \begin{equation}\label{eq:aux.pr.10}
  f_{ \xi, \alpha}(t)f_{ \xi, \alpha}''(t)-(f_{ \xi, \alpha}'(t))^2 \leq 0,\qquad \forall \; t \in [0,t_*).
    \end{equation}
Since $f_{ \xi, \alpha}$ is strictly convex (Proposition \ref{pr:1}) and $f_{ \xi, \alpha}'$ is  convex (Proposition \ref{pr:10}), we have
 $$
        0> f_{ \xi, \alpha}(t)+f_{ \xi, \alpha}'(t)(t_*-t),\quad f_{ \xi, \alpha}''(t)\geq 0, \quad f_{ \xi, \alpha}'(t_*)\geq f_{ \xi, \alpha}'(t)+f_{ \xi, \alpha}''(t)(t_*-t), \quad \forall \; t\in [0,t_*).
$$
Using these inequalities and  the second inequality in \eqref{eq:n.f}, we obtain
$$
f_{ \xi, \alpha}(t)f_{ \xi, \alpha}''(t)-(f_{ \xi, \alpha}'(t))^2 \leq f_{ \xi, \alpha}'(t)(t-t_*)f_{ \xi, \alpha}''(t)-(f_{ \xi, \alpha}'(t))^2\\
\leq-f_{ \xi, \alpha}'(t)f_{ \xi, \alpha}'(t_*),
$$
which combined with Proposition \ref{pr:1} yields the inequality in \eqref{eq:aux.pr.10}. Therefore, the proposition is fulfilled.
\end{proof}
\begin{proposition} \label{nnn}
It holds that  $\xi <t_*$.  Moreover, if
\begin{equation}\label{alfa}
\alpha\geq \frac{\eta\beta_{x_0}(t)}{\eta\beta_{x_0}(t)[f'(t)+1]+1}, \qquad \forall\; \xi\leq t< t_*,
\end{equation}
then
$$
{\eta\beta_{x_0}{(t)}}/{\alpha}\leq -{1}/{f'_{ \xi, \alpha}(t)}, \qquad \forall\; \xi\leq t< t_*.
$$
\end{proposition}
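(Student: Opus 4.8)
The plan is to handle the two assertions separately; both are short, and the second is purely algebraic.

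For the bound $\xi<t_*$ I would simply specialize the chain of inequalities $t<t-f_{\xi,\alpha}(t)/f_{\xi,\alpha}'(t)<t_*$ from Proposition~\ref{pr:1} (which holds for every $t\in[0,t_*)$) to $t=0$. Since $f_{\xi,\alpha}(0)=\xi$ and $f_{\xi,\alpha}'(0)=-1$ by Proposition~\ref{pr:10}, the middle term is $\xi$, so this reads $0<\xi<t_*$, which is more than what is claimed. (Alternatively one could note $n_{f_{\xi,\alpha}}(0)=\xi$ and invoke the fact from Proposition~\ref{pr:2} that $n_{f_{\xi,\alpha}}$ maps $[0,t_*)$ into itself.)

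For the second assertion, the key computation is that from the definition \eqref{defh} one has $f_{\xi,\alpha}'(t)=\alpha\,(f'(t)+1)-1$, hence $-f_{\xi,\alpha}'(t)=1-\alpha(f'(t)+1)$. Proposition~\ref{pr:1} gives $f_{\xi,\alpha}'(t)<0$ on $[0,t_*)$, so $-f_{\xi,\alpha}'(t)>0$ on $[\xi,t_*)$; moreover, since $\beta_{x_0}$ is positive-valued, $\eta\geq1$, and $f'(t)>f'(0)=-1$ for $t\geq\xi>0$ by {\bf h1} and {\bf h2}, all of $\eta\beta_{x_0}(t)$, $\alpha$ and $\eta\beta_{x_0}(t)[f'(t)+1]+1$ are strictly positive. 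I would then rewrite the target inequality $\eta\beta_{x_0}(t)/\alpha\leq -1/f_{\xi,\alpha}'(t)$: multiplying through by the positive numbers $\alpha$ and $-f_{\xi,\alpha}'(t)$ and substituting $-f_{\xi,\alpha}'(t)=1-\alpha(f'(t)+1)$ turns it into $\eta\beta_{x_0}(t)\leq \alpha\big(\eta\beta_{x_0}(t)[f'(t)+1]+1\big)$, and dividing by the positive quantity $\eta\beta_{x_0}(t)[f'(t)+1]+1$ yields exactly the hypothesis \eqref{alfa}. Since every step is a multiplication or division by a strictly positive quantity, the chain is reversible, and reading it backwards completes the proof.

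There is no real obstacle here: the argument is a one-line manipulation of the defining formula for $f_{\xi,\alpha}'$. The only point that needs care is the sign bookkeeping, which is why one first records $f_{\xi,\alpha}'(t)<0$ (Proposition~\ref{pr:1}), $f'(t)+1>0$ (from {\bf h1}, {\bf h2} and $\xi>0$), and $\beta_{x_0}(t)>0$ before clearing denominators.
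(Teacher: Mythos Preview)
Your proof is correct and essentially matches the paper's. The only cosmetic difference is in the first assertion: the paper uses strict convexity of $f_{\xi,\alpha}$ to write $0=f_{\xi,\alpha}(t_*)>f_{\xi,\alpha}(0)+f_{\xi,\alpha}'(0)t_*=\xi-t_*$, whereas you specialize the Newton-step inequality of Proposition~\ref{pr:1} at $t=0$; for the second assertion both arguments are the same algebraic rearrangement of $f_{\xi,\alpha}'(t)=\alpha(f'(t)+1)-1$, with your version stated as a reversible chain and the paper's as a forward deduction.
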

\begin{proof}
Proposition \ref{pr:1} implies  that $f_{ \xi, \alpha}$ is strictly convex, which combined with the definition of $t_*$ in {\bf h3} and item {\bf i}  of  Proposition~\ref{pr:10}   gives
$$
0=f_{ \xi, \alpha}(t_*)>f_{ \xi, \alpha}(0)+f_{ \xi, \alpha}'(0)(t_*-0)=  \xi-t_*,
$$
which proves the first statement.

Combining the assumption in \eqref{alfa}, as well as {\bf  h1}  and{ \bf h2}, we obtain after simple calculus that
$$
\alpha {\eta\beta_{x_0}(t)(f'(t)+1)+\alpha}\geq \eta\beta_{x_0}(t), \qquad \forall\; \xi \leq t< t_*.
$$
Hence, using $f'_{\xi,\alpha }(t)=(\alpha-1)+\alpha f'(t)$ and some algebraic manipulations, the last inequality becomes
$$
 \eta\beta_{x_0}(t)f'_{\xi,\alpha }(t) \geq -\alpha, \qquad \forall\; \xi\leq t< t_*,
$$
which combined with the second inequality in \eqref{eq:n.f} yields the desired inequality.
\end{proof}
\begin{proposition}\label{pr:111}
Let $0<\bar{\alpha}<\alpha$ for the corresponding auxiliary functions $f_{\xi,\bar{\alpha}}$ and $f_{\xi,\alpha}$, as well as  ${\bar t}_{*}$ and $ t_{*}$,   its smallest  zeros, respectively. Then the following assertions hold:
\begin{itemize}
 \item[{\bf i)}] $f_{ \xi, \bar{\alpha}}< f_{\xi,{\alpha}}$ on $(0, R)$;
 \item[{\bf ii)}]  $f_{ \xi, \bar{\alpha}}'<f_{\xi,{\alpha}}'$ on $(0, R)$;
\item[{\bf iii)}]  $ {\bar t}_{*}<t_*$.
\end{itemize}
\end{proposition}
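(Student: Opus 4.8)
The plan is to reduce everything to the elementary observation that $t+f(t)>0$ and $1+f'(t)>0$ for $t\in(0,R)$. First I would write out, directly from the definition \eqref{defh}, the difference of the two auxiliary functions and of their derivatives:
$$
f_{\xi,\alpha}(t)-f_{\xi,\bar\alpha}(t)=(\alpha-\bar\alpha)\bigl(t+f(t)\bigr),\qquad
f_{\xi,\alpha}'(t)-f_{\xi,\bar\alpha}'(t)=(\alpha-\bar\alpha)\bigl(1+f'(t)\bigr),\qquad t\in[0,R).
$$
Since $\alpha-\bar\alpha>0$, items {\bf i} and {\bf ii} amount to showing $t+f(t)>0$ and $1+f'(t)>0$ on $(0,R)$. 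By {\bf h1}, $f'(0)=-1$, and by {\bf h2}, $f'$ is strictly increasing; hence $1+f'(t)>1+f'(0)=0$ for every $t\in(0,R)$, which is {\bf ii}. Integrating and using $f(0)=0$ from {\bf h1}, $t+f(t)=\int_0^t\bigl(1+f'(s)\bigr)\,ds>0$ for $t\in(0,R)$, which is {\bf i}. (Equivalently, $t\mapsto t+f(t)$ is strictly convex with derivative $0$ at $0$, hence strictly increasing from the value $0$.)

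For {\bf iii}, I would combine {\bf i} with the defining property of the smallest zeros. Applying {\bf i} at $t=\bar t_*$ and using $f_{\xi,\bar\alpha}(\bar t_*)=0$ gives $f_{\xi,\alpha}(\bar t_*)>0$; moreover, since $f_{\xi,\bar\alpha}>0$ on $(0,\bar t_*)$ (this is the characterization of the smallest zero, cf.\ {\bf h3} and Proposition~\ref{pr:1}) and $f_{\xi,\alpha}>f_{\xi,\bar\alpha}$ there by {\bf i}, we obtain $f_{\xi,\alpha}(t)>0$ for all $t\in[0,\bar t_*]$ (the endpoint $0$ being covered by $f_{\xi,\alpha}(0)=\xi>0$). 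By {\bf h3}, $t_*$ is the smallest zero of $f_{\xi,\alpha}$ in $(0,R)$, i.e.\ $f_{\xi,\alpha}>0$ on $(0,t_*)$ and $f_{\xi,\alpha}(t_*)=0$; since $f_{\xi,\alpha}$ has no zero in $(0,\bar t_*]$, this forces $t_*>\bar t_*$, proving {\bf iii}.

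The only point needing a little care is that $\bar t_*$ be well defined, i.e.\ that $f_{\xi,\bar\alpha}$ also satisfies {\bf h3}; this is implicit in the statement, and it can in any case be deduced from {\bf h3} for $f_{\xi,\alpha}$: indeed $f_{\xi,\bar\alpha}(0)=\xi>0$ while $f_{\xi,\bar\alpha}(t_*)<f_{\xi,\alpha}(t_*)=0$ by {\bf i}, so by continuity $f_{\xi,\bar\alpha}$ vanishes somewhere in $(0,t_*)$, and the strict convexity of $f_{\xi,\bar\alpha}$ (argued exactly as in Proposition~\ref{pr:1}) shows that its smallest zero $\bar t_*$ satisfies $f_{\xi,\bar\alpha}>0$ on $(0,\bar t_*)$. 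Beyond this bookkeeping I do not anticipate any genuine obstacle: the whole proposition is a monotonicity-in-$\alpha$ statement that collapses once the signs of $t+f(t)$ and $1+f'(t)$ are pinned down.
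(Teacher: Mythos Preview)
Your proof is correct and follows essentially the same approach as the paper: both arguments reduce items {\bf i} and {\bf ii} to the positivity of $t+f(t)$ and $1+f'(t)$ on $(0,R)$, obtained from {\bf h1}--{\bf h2} (the paper phrases the first via strict convexity of $f$, you via integration, which is the same thing), and then derive {\bf iii} from {\bf i} together with the defining property of the smallest zeros. Your extra paragraph verifying that $\bar t_*$ is well defined is a welcome piece of bookkeeping that the paper leaves implicit.
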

\begin{proof}
From   {\bf h2} it follows that   $f'$  is strictly increasing which implies that $f$ is strictly convex. Thus, using  $ {\bf h1}$ we conclude  that
  $  f(t)+t> 0,$ for all $ t\in (0, R)$  and hence  the assumption  $\alpha>\bar{\alpha}$ implies
 $$
   \bar{\alpha} (t+f(t))< \alpha(t+f(t)),  \qquad \forall \; t \in [0, R).
  $$
To conclude the proof of item {\bf i}, add  $\xi -t$  on both sides of the last inequality and use the definition in \eqref{defh}.

To prove item {\bf ii},   we first use that $f'$ is strictly increasing ($ {\bf h2}$), as well as the assumption  $\alpha>\bar{\alpha}$ to obtain that  $ (\alpha-\bar{\alpha})(f'(t)-f'(0))> 0,$
for all $ t\in (0, R).$ Hence, from $ {\bf h1}$ and some algebraic manipulation, we obtain
 \[
    (\bar{\alpha}-1)+\bar{\alpha}f'(t)< (\alpha-1)+\alpha  f'(t),  \qquad \forall \; t \in [0, R).
 \]
 So, by using the  definition in \eqref{defh}, the statement holds true.

To establish item {\bf iii}, use  item {\bf i}  and the definition of ${\bar t}_{*}$ and $t_*$ in~{\bf h3}.
\end{proof}
\section{Semi-local analysis for the Gauss-Newton method } \label{lkant}
In this section our goal is to state and prove a semi-local theorem for  the  sequence generated by   {\bf Algorithm \ref{algor1}}
in order to solve problem \eqref{eq:p}. Under the  hypothesis that the initial point is a quasi-regular point of the inclusion
\eqref{eq:cc} and the nonlinear function \(F\) satisfies  the  majorant condition in Definition~\ref{majo}, we will prove
convergence  of the sequence to a point $x_*\in B[x_0, t_*]$ such that  $F(x_*)\in C,$ and in particular that $x_*$ solves \eqref{eq:p}.  The statement of the theorem~is:
\begin{theorem}\label{th:knt}
Let $F:\mathbb{R}^n \to \mathbb{R}^{m}$ be a continuously differentiable function.  Assume  that  $R>0$,  $x_0\in\mathbb{R}^n$ and $f:[0,\;  R)\to \mathbb{R}$   is a  majorant
function for $F$ on  $B(x_0,  R).$   Take  the constants
\(
 \alpha >0
\)
and
\(
 \xi >0
\)
and consider the auxiliary
function \mbox{$f_{ \xi, \alpha}:[0, R)\to \mathbb{R}$},
$$
  f_{ \xi, \alpha}(t):=\xi+(\alpha-1)t+\alpha f(t).
$$
If    $f_{ \xi, \alpha}$   satisfies {\bf h3}, i.e.,  $t_*$  is the smallest zero of $f_{ \xi, \alpha}$,  then the sequence
  generated by Newton's method  for solving $ f_{ \xi, \alpha}(t)=0,$  with starting
  point $t_{0}=0$,
\begin{equation}\label{ns.KT}
    t_{k+1} ={t_{k}}-f_{ \xi, \alpha}'(t_{k}) ^{-1}f_{ \xi, \alpha}(t_{k}),\quad k=0,1,\ldots\,,
\end{equation}
  is well defined, $\{t_{k}\}$ is strictly increasing, is contained in
  $[0,t_*)$, and converges $Q$-linearly to $t_*$.
  Let   \(\eta \in [1,\infty)\),  \( \Delta \in (0,\infty]\) and   $h:\mathbb{R}^{m}\to \mathbb{R}$  a real-valued convex function with minimizer set \(C\)  nonempty.
Suppose that  $x_0 \in \mathbb{R}^n$ is a
quasi-regular point of the inclusion
\[
F(x)\in C,
\]
with the quasi-regular radius $r_{x_0}$ and
the quasi-regular bound function~$\beta_{x_0}$  as defined in \eqref{eq:qrr} and  \eqref{eq:qrf}, respectively.
 If    \(d(F(x_0),C) >0\),   $t_* \leq r_{x_0}$,
 \begin{equation}\label{a11}
  {\Delta}\geq \xi \geq \eta\beta_{x_0}(0)d(F(x_0),C), \qquad \alpha \geq  \sup\left\{\frac{\eta\beta_{x_0}(t)}{\eta\beta_{x_0}(t)[f'(t)+1]+1}:  \xi \leq t < t_*\right\},
  \end{equation}
  then the sequence generated by {\bf Algorithm \ref{algor1}}, denoted by $\{x_k\},$ is contained in $B(x_0,
  t_*)$,
\begin{equation}\label{eq:001}
F(x_k)+F'(x_k)(x_{k+1}-x_{k}) \in C,  \quad k=0,1, \ldots\, ,
\end{equation}
satisfies the inequalities
\begin{equation}\label{eq:bd}
  \|x_{k+1}-x_{k}\|   \leq  t_{k+1}-t_{k} , \qquad \|x_{k+1}-x_{k}\|\leq   \frac{t_{k+1}-t_{k}}{(t_{k}-t_{k-1})^2} \|x_k-x_{k-1}\|^2,
  \end{equation}
 \( k=0, 1, \ldots\, , \) and \( k=1,2, \ldots\, \), respectively,  converge  to a point $x_*\in B[x_0, t_*]$ such that  $F(x_*)\in C,$
\begin{equation}\label{eq:002}
 \|x_*-x_{k}\|   \leq  t_*-t_{k}, \qquad k=0,1, \ldots\,
\end{equation}
and  the convergence is  $R$-linear. If, additionally,   $f_{\xi, \alpha}$ satisfies  {\bf h4}    then the sequences $\{t_k\}$ and  $\{x_k\}$ converge $Q$-quadratically  and $R$-quadratically
 to $t_*$ and $x_*$, respectively.
\end{theorem}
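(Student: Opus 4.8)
The plan is to reduce everything about $\{x_k\}$ to the scalar Newton sequence $\{t_k\}$, via the ``region where the iteration is well behaved'' technique of \cite{FS02}. The assertions on $\{t_k\}$ alone --- well definedness, strict monotonicity, containment in $[0,t_*)$, $Q$-linear convergence, and $Q$-quadratic convergence under {\bf h4} --- are exactly Corollary~\ref{cr:kanttk}, and $\xi<t_*$ is the first part of Proposition~\ref{nnn}; so the work is to track $\{x_k\}$ and to establish \eqref{eq:001}--\eqref{eq:002}. Throughout, $t_*\le r_{x_0}$ guarantees that any iterate with $\|x_k-x_0\|\le t_k<t_*$ lies in $B(x_0,r_{x_0})$, so the quasi-regularity inequality \eqref{eq:dqr} (with $\beta=\beta_{x_0}$) applies at it and in particular $D_C(x_k)\neq\emptyset$.

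The core is an induction on $k$ proving simultaneously: (a) $x_k$ is well defined and $\|x_k-x_0\|\le t_k$; (b) $d(0,D_C(x_k))\le\Delta$, so that Proposition~\ref{DX} applies at $x_k$ and therefore $d_k\in D_\Delta(x_k)\subset D_C(x_k)$ --- which yields \eqref{eq:001} --- and $\|d_k\|\le\eta\,d(0,D_C(x_k))$; (c) $\|d_k\|\le t_{k+1}-t_k$. The base case $k=0$ I would treat by hand, since $t_0=0<\xi$ and Proposition~\ref{nnn} is not yet usable: from \eqref{eq:dqr} at $x_0$ and the left-hand part of \eqref{a11}, using $\eta\ge1$, one gets $d(0,D_C(x_0))\le\beta_{x_0}(0)\,d(F(x_0),C)\le\eta\beta_{x_0}(0)\,d(F(x_0),C)\le\xi\le\Delta$, and then $\|d_0\|\le\eta\,d(0,D_C(x_0))\le\xi=t_1-t_0$. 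For the inductive step I would first note $\|x_{k+1}-x_0\|\le\|x_k-x_0\|+\|d_k\|\le t_k+(t_{k+1}-t_k)=t_{k+1}<t_*$; then, since $F(x_k)+F'(x_k)d_k\in C$ and $x_{k+1}=x_k+d_k$, Lemma~\ref{pr:taylor} with $t=t_k$, $v=t_{k+1}$ bounds the residual,
\[
d(F(x_{k+1}),C)\le\|F(x_{k+1})-F(x_k)-F'(x_k)d_k\|\le\bigl[f(t_{k+1})-f(t_k)-f'(t_k)(t_{k+1}-t_k)\bigr]\Bigl(\frac{\|d_k\|}{t_{k+1}-t_k}\Bigr)^{2}.
\]
The algebraic hinge is that, since $t_{k+1}=n_{f_{\xi,\alpha}}(t_k)$ one has $f_{\xi,\alpha}(t_k)+f_{\xi,\alpha}'(t_k)(t_{k+1}-t_k)=0$ and the affine part of $f_{\xi,\alpha}$ contributes nothing to the quadratic remainder, so $\alpha\bigl[f(t_{k+1})-f(t_k)-f'(t_k)(t_{k+1}-t_k)\bigr]=f_{\xi,\alpha}(t_{k+1})$; bounding the squared ratio by $1$ gives $d(F(x_{k+1}),C)\le f_{\xi,\alpha}(t_{k+1})/\alpha$. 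Since $t_{k+1}\ge\xi$ by Proposition~\ref{pr:1.5}, the quasi-regularity inequality and Proposition~\ref{nnn} then give
\[
d(0,D_C(x_{k+1}))\le\beta_{x_0}(t_{k+1})\frac{f_{\xi,\alpha}(t_{k+1})}{\alpha}\le\frac{-f_{\xi,\alpha}(t_{k+1})}{f_{\xi,\alpha}'(t_{k+1})}=t_{k+2}-t_{k+1}\le t_1-t_0=\xi\le\Delta,
\]
where the penultimate inequality uses that $t\mapsto-f_{\xi,\alpha}(t)/f_{\xi,\alpha}'(t)$ is decreasing (Proposition~\ref{fcdecrescente}); inserting the factor $\eta$ and using (b) closes the induction with $\|d_{k+1}\|\le t_{k+2}-t_{k+1}$. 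Keeping the factor $\bigl(\|d_{k-1}\|/(t_k-t_{k-1})\bigr)^{2}$ instead of bounding it by $1$ and rerunning the same chain for $k\ge1$ produces the second inequality in \eqref{eq:bd}.

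To finish: from $\|x_{k+1}-x_k\|\le t_{k+1}-t_k$ and convergence of $\{t_k\}$, $\{x_k\}$ is Cauchy, hence converges to some $x_*\in B[x_0,t_*]$, and summing the tail of these bounds gives \eqref{eq:002}. Since $C$ is the minimizer set of the finite-valued, hence continuous, convex $h$, it is closed; letting $k\to\infty$ in \eqref{eq:001}, using $\|x_{k+1}-x_k\|\to0$ and continuity of $F$ and $F'$, gives $F(x_*)\in C$, and by \eqref{eq:cc} this $x_*$ solves \eqref{eq:p}. The rate statements follow from \eqref{eq:002}: $\{x_k\}$ converges $R$-linearly, and $R$-quadratically under {\bf h4}, because $\{t_k\}$ converges to $t_*$ $Q$-linearly, resp. $Q$-quadratically, by Corollary~\ref{cr:kanttk}. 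I expect the main obstacle to be the bookkeeping of the induction --- checking at each step the hypotheses of Proposition~\ref{DX}, Lemma~\ref{pr:taylor} and Proposition~\ref{nnn}, and the fact that the base case must be handled separately --- together with recognizing the identity $f_{\xi,\alpha}(t_{k+1})=\alpha\bigl[f(t_{k+1})-f(t_k)-f'(t_k)(t_{k+1}-t_k)\bigr]$, which is what transfers control from the scalar majorant to the Gauss-Newton residual.
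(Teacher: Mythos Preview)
Your proposal is correct and follows essentially the same argument as the paper. The only organizational difference is that the paper packages the induction via the ``well-behaved'' sets $K(t):=\{x:\|x-x_0\|\le t,\ \eta\,d(0,D_C(x))\le -f_{\xi,\alpha}(t)/f_{\xi,\alpha}'(t)\}$ and proves $G_F(K(t))\subset K(n_{f_{\xi,\alpha}}(t))$ (Lemma~\ref{First}, Lemma~\ref{NfNF}), then obtains $x_k\in K(t_k)$; your direct induction on $k$ proving $\|x_k-x_0\|\le t_k$ and $\eta\,d(0,D_C(x_k))\le t_{k+1}-t_k$ is the unwound form of that same step, and uses the identical ingredients (Proposition~\ref{DX}, Lemma~\ref{pr:taylor}, the identity $f_{\xi,\alpha}(t_{k+1})=\alpha[f(t_{k+1})-f(t_k)-f'(t_k)(t_{k+1}-t_k)]$, Propositions~\ref{pr:1.5}, \ref{fcdecrescente}, \ref{nnn}).
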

\begin{remark}
If,
$$\alpha>\bar{\alpha}:=\sup\left\{\frac{\eta\beta_{x_0}(t)}{\eta\beta_{x_0}(t)[f'(t)+1]+1}:  \xi \leq t < t_*\right\},  $$
 then the sequence $\{x_k\}$ converges R-quadratically  to $x_*$.
To prove this assertion,  note that through item {\bf iii} of Proposition~\ref{pr:111}, we have ${\bar t}_{*}<t_{*}$. Hence, using that
 $f_{\xi,\bar{\alpha}}'$ strictly increasing  and item~{\bf ii} of Proposition~\ref{pr:111}, we obtain
$$f_{\xi,\bar{\alpha}}'( {\bar t}_{*})<f_{\xi,\bar{\alpha}}'( t_{*})<f_{ \xi,\alpha}'( t_{*}),$$
which, combined with  Proposition \ref{pr:1} implies  that  $f_{\xi,\bar{\alpha}}'( t_{\bar*})<0$. So, the statement
is correct if $f_{\xi,\alpha}$ is replaced by $f_{\xi,\bar{\alpha}}$ in Theorem \ref{th:knt}.
\end{remark}

Remember that all statements  made in Theorem~\ref{th:knt} for the sequence $t_k$  were proven in Corollary \ref{cr:kanttk}.

From now on, we assume that the hypotheses of Theorem \ref{th:knt}
hold, with the exception of {\bf h4}, which will be considered to hold
only when explicitly stated.

\subsection{Proof of convergence}\label{sec:convxk}
In this section we prove  convergence of
the sequence $\{x_k\}$ generated  by {\bf Algorithm \ref{algor1}} for solving  \eqref{eq:p}, based on the assumptions stated in
Theorem~\ref{th:knt}.

As we saw in Section \ref{sec:int.100},  $D_\Delta (x) \neq \emptyset$ for all $x\in \mathbb{R}^n$, therefore  the sequence $\{x_k\}$ is well defined.
But this is not enough  to prove the convergence of sequence $\{x_k\}$ to some point $x_*\in \mathbb{R}^n$  such that $F(x_*)\in C$, because we have no relationship between
the set of search directions  $D_\Delta (x)$ to the set of solutions of the linearized inclusion
$$F(x)+F'(x)d \in C, \qquad \|d\|\leq \Delta.$$
Now, if we prove that  $D_\Delta (x) \subset D_C(x)$ for suitable points, then  we can use  the results of regularity to relate the sets mentioned above.
First, we define some subsets of $B(x_0, t_*)$ in which, as we shall
prove, the desired inclusion holds   for all points in these subsets.  
\begin{align}\label{E:K}
K(t)&:=\left\{ x\in\mathbb{R}^n \, : \;  \|x-x_0\|\leq t, ~ \eta d(0,D_C(x)) \leqslant -\frac{f_{ \xi, \alpha}(t)}{f_{ \xi, \alpha}'(t)}\right\},\qquad
   t\in [0,t_*)\,,\\
  \label{eq:def.K}
 K&:=\bigcup_{t\in[0,t_*)} K(t).
\end{align}
In \eqref{E:K}  we assume that $0\leqslant t<t_*$, therefore it follows from  Proposition \ref{pr:1}  that  $f'_{\xi,\alpha}(t)\neq 0$.
So, the above definitions are consistent.


\begin{proposition} \label{sss}
If $x\in K$,  then
 $$
 D_\Delta(x)=\{ d\in \mathbb{R}^n~:~ F(x)+F'(x)d\in C,~\|d\|\leq \Delta \} \subset D_C(x),
 $$
 and
 $$
  d(0,D_{\Delta}(x))=d(0,D_C(x)).
 $$
\end{proposition}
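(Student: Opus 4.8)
The plan is to invoke Proposition~\ref{DX}, whose conclusion is exactly the desired pair of identities, provided we can verify its two hypotheses at the point $x$: namely $D_C(x)\neq\emptyset$ and $d(0,D_C(x))\leq\Delta$. Since $x\in K$, by \eqref{eq:def.K} there is some $t\in[0,t_*)$ with $x\in K(t)$, so from \eqref{E:K} we have $\|x-x_0\|\leq t<t_*$ and
$$
\eta\,d(0,D_C(x))\leq -\frac{f_{\xi,\alpha}(t)}{f_{\xi,\alpha}'(t)}.
$$
The membership $x\in K(t)$ already presupposes that $D_C(x)$ is involved in a finite distance bound; to be safe I would first note that the quasi-regularity hypothesis $t_*\leq r_{x_0}$ together with $\|x-x_0\|<t_*\leq r_{x_0}$ places $x\in B(x_0,r_{x_0})$, so by Definition~\ref{qr} (and \eqref{eq:dqr}) $D_C(x)\neq\emptyset$. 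That disposes of the first hypothesis of Proposition~\ref{DX}.

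For the second hypothesis I need $d(0,D_C(x))\leq\Delta$. From the displayed inequality above and $\eta\geq 1$,
$$
d(0,D_C(x))\leq \eta\,d(0,D_C(x))\leq -\frac{f_{\xi,\alpha}(t)}{f_{\xi,\alpha}'(t)}.
$$
Now by Proposition~\ref{fcdecrescente} the map $t\mapsto -f_{\xi,\alpha}(t)/f_{\xi,\alpha}'(t)$ is decreasing on $[0,t_*)$, so its value at $t$ is at most its value at $0$, which by Proposition~\ref{pr:10}(i) equals $-f_{\xi,\alpha}(0)/f_{\xi,\alpha}'(0) = -\xi/(-1) = \xi$. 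Hence $d(0,D_C(x))\leq\xi\leq\Delta$ by the left-hand inequality in \eqref{a11}. Both hypotheses of Proposition~\ref{DX} now hold at $x$, and applying it yields
$$
D_\Delta(x)=\{d\in\mathbb{R}^n:\|d\|\leq\Delta,\ F(x)+F'(x)d\in C\}\subset D_C(x),\qquad d(0,D_\Delta(x))=d(0,D_C(x)),
$$
which is precisely the claim.

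I do not expect any serious obstacle here: the proposition is essentially a bookkeeping application of Proposition~\ref{DX}, and the only mildly delicate point is making sure the hypothesis $d(0,D_C(x))\leq\Delta$ is genuinely available. That step hinges on chaining the $K(t)$-defining inequality through the monotonicity of $-f_{\xi,\alpha}/f_{\xi,\alpha}'$ (Proposition~\ref{fcdecrescente}) to compare with its value $\xi$ at $t=0$, and then using $\Delta\geq\xi$ from \eqref{a11}; once that chain is written out, the rest is immediate.
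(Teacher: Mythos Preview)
Your proposal is correct and follows essentially the same approach as the paper: both reduce to Proposition~\ref{DX}, verify $D_C(x)\neq\emptyset$ via quasi-regularity and $\|x-x_0\|<t_*\leq r_{x_0}$, and obtain $d(0,D_C(x))\leq\Delta$ by chaining the $K(t)$ inequality through the monotonicity of $-f_{\xi,\alpha}/f_{\xi,\alpha}'$ (Proposition~\ref{fcdecrescente}) to the value $\xi$ at $t=0$ and then using $\xi\leq\Delta$ from \eqref{a11}.
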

\begin{proof}
From Proposition~\ref{DX} it is sufficient to prove that \(D_C(x)\neq \emptyset \) and \(d(0, D_C(x))\leq \Delta\) for all \(x\in K\).
Let  $x\in K$,  then $x\in K(t)$ for some $t\in [0,t_*)$ which implies that \(x\in B(x_0,t_*)\).  Since \(t_*\leq r_{x_0}\) and \(x_0\) is a quasi-regular point, it follows from Definition~\ref{qr} and  the definition of the quasi-regular radius in \eqref{eq:qrr} that $D_C(x)  \neq \emptyset$.

By hypothesis $\eta \geq 1$ and \(\xi\leqslant \Delta\).  Thus,  as $x\in K(t)$, by using the  definition  in   \eqref{E:K},
Proposition~\ref{fcdecrescente}, and  Proposition \ref{pr:10}   we obtain
$$d(0,D_C(x))\leqslant \eta d(0,D_C(x))
\leqslant -\frac{f_{ \xi, \alpha}(t)}{f_{ \xi, \alpha}'(t)}\leqslant -\frac{f_{ \xi, \alpha}(0)}{f_{ \xi, \alpha}'(0)}=\xi\leqslant \Delta,$$
which proves the desired result.
\end{proof}
For each  $x\in \mathbb{R}^n$, we define the set $\bar{D}_{\Delta}(x)$  as
\begin{equation}\label{def11}
\bar{D}_{\Delta}(x):= \left\{  d \in D_{\Delta}(x)~:~ \|d\| \leq \eta d(0,D_{\Delta}(x)) \right\}.
\end{equation}
As $D_\Delta (x) \neq \emptyset$ for all $x\in \mathbb{R}^n$, we have  $\bar{D}_\Delta (x) \neq \emptyset$ for all $x\in \mathbb{R}^n$
and consequently, the Gauss-Newton iteration multifunction is well defined.
Let us call  $G_{F}$   the Gauss-Newton
iteration multifunction for $F$ in $B(x_0, t_*)$:
\begin{equation} \label{NF}
  \begin{array}{rcl}
  G_{F}:B(x_0, t_*)&\to& P(\mathbb{R}^n)\\
    x&\mapsto& x+\bar{D}_\Delta(x).
  \end{array}
\end{equation}
We shall prove that the Gauss-Newton iteration multifunction  is ``well behaved'' on the subsets defined in \eqref{E:K}, but first we need the following  technical  result:
\begin{lemma} \label{First}
For each $t\in [0, t_*)$, $x\in K(t)$ and $y \in G_F(x)$ it holds that:
\begin{itemize}
 \item[{\bf i)}]  $ \|y-x\| \leq n_{f_{ \xi, \alpha}}(t)-t$;
 \item[{\bf ii)}]  $\|y-x_0\| \leq n_{f_{ \xi, \alpha}}(t)<t_*$;
 \item[{\bf iii)}]  $\displaystyle \eta d(0,D_{C}(y))\leq -\frac{f_{\xi,\alpha}(n_{f_{\xi,\alpha}}(t))}{f'_{\xi,\alpha}(n_{f_{\xi,\alpha}}(t))}\left( \frac{\|y-x\|}{n_{f_{\xi,\alpha}}(t)-t}\right)^2.$
\end{itemize}
\end{lemma}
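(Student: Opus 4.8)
The plan is to dispatch the three items in order, the engine being that $x\in K(t)\subset K$ unlocks Proposition~\ref{sss}. Write $t^{+}:=n_{f_{\xi,\alpha}}(t)$ throughout, and recall from Proposition~\ref{pr:1} that $t<t^{+}<t_*$ and $f_{\xi,\alpha}(t^{+})>0>f_{\xi,\alpha}'(t^{+})$.

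For (i), put $y=x+d$ with $d\in\bar D_{\Delta}(x)$, so $\|d\|\leq\eta\,d(0,D_{\Delta}(x))$; since $x\in K$, Proposition~\ref{sss} gives $d(0,D_{\Delta}(x))=d(0,D_{C}(x))$, and the defining inequality of $K(t)$ in \eqref{E:K} bounds this by $-f_{\xi,\alpha}(t)/f_{\xi,\alpha}'(t)=t^{+}-t$. Hence $\|y-x\|\leq t^{+}-t$, and (ii) follows from the triangle inequality, $\|y-x_0\|\leq(t^{+}-t)+t=t^{+}<t_*$.

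Item (iii) carries the weight. Since $\|y-x_0\|\leq t^{+}<t_*\leq r_{x_0}$, quasi-regularity of $x_0$ and the definition of $\beta_{x_0}$ give $d(0,D_{C}(y))\leq\beta_{x_0}(\|y-x_0\|)\,d(F(y),C)\leq\beta_{x_0}(t^{+})\,d(F(y),C)$, using that $\beta_{x_0}$ is increasing. Because $d\in D_{\Delta}(x)\subset D_{C}(x)$ by Proposition~\ref{sss}, one has $F(x)+F'(x)(y-x)\in C$, so $d(F(y),C)\leq\|F(y)-[F(x)+F'(x)(y-x)]\|$; then Lemma~\ref{pr:taylor} with $v=t^{+}$ — applicable since $\|x-x_0\|\leq t$, $\|y-x\|\leq t^{+}-t$ by (i), and $0\leq t<t^{+}<R$ — bounds the right side by $\bigl(f(t^{+})-f(t)-f'(t)(t^{+}-t)\bigr)\bigl(\|y-x\|/(t^{+}-t)\bigr)^{2}$. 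Now I substitute $\alpha f(s)=f_{\xi,\alpha}(s)-\xi-(\alpha-1)s$ into the first factor and invoke the Newton identity $f_{\xi,\alpha}'(t)(t^{+}-t)=-f_{\xi,\alpha}(t)$: the terms linear in $s$ cancel and one is left with $f(t^{+})-f(t)-f'(t)(t^{+}-t)=f_{\xi,\alpha}(t^{+})/\alpha$. Chaining the estimates yields $\eta\,d(0,D_{C}(y))\leq\frac{\eta\beta_{x_0}(t^{+})}{\alpha}\,f_{\xi,\alpha}(t^{+})\bigl(\|y-x\|/(t^{+}-t)\bigr)^{2}$.

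To finish, I replace $\eta\beta_{x_0}(t^{+})/\alpha$ by $-1/f_{\xi,\alpha}'(t^{+})$: by Proposition~\ref{pr:1.5} and (ii) we have $\xi\leq t^{+}<t_*$, so Proposition~\ref{nnn} applies — its hypothesis \eqref{alfa} is precisely the lower bound on $\alpha$ imposed in \eqref{a11} — giving $\eta\beta_{x_0}(t^{+})/\alpha\leq -1/f_{\xi,\alpha}'(t^{+})$; since $f_{\xi,\alpha}(t^{+})\bigl(\|y-x\|/(t^{+}-t)\bigr)^{2}\geq 0$, multiplying through produces exactly the bound claimed in (iii). The only genuine obstacle is bookkeeping: one must keep verifying that $t^{+}$ lies in $[\xi,t_*)$ so that both Lemma~\ref{pr:taylor} and Proposition~\ref{nnn} are in force, and carry out the cancellation that converts the majorant error $f(t^{+})-f(t)-f'(t)(t^{+}-t)$ into $f_{\xi,\alpha}(t^{+})/\alpha$; the rest is a direct concatenation of results already proved.
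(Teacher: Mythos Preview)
Your proof is correct and follows essentially the same route as the paper's: both use Proposition~\ref{sss} together with the definition of $K(t)$ to get item~(i), the triangle inequality for (ii), and then for (iii) chain quasi-regularity, the inclusion $F(x)+F'(x)(y-x)\in C$, Lemma~\ref{pr:taylor}, the identity $\alpha\bigl(f(t^{+})-f(t)-f'(t)(t^{+}-t)\bigr)=f_{\xi,\alpha}(t^{+})$ coming from the Newton step, and finally Proposition~\ref{nnn} after checking $\xi\leq t^{+}<t_*$ via Proposition~\ref{pr:1.5}. The only differences are cosmetic (you introduce the abbreviation $t^{+}$ and apply the monotonicity of $\beta_{x_0}$ one line earlier).
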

\begin{proof}
Since  $t\in[0,t_*)$ and $x\in K(t)$,  by using the definition in \eqref{E:K}, Proposition \ref{sss}  and the
first two statements in Proposition~\ref{pr:2}, we obtain
\begin{equation} \label{eq:eq.aux.k}
  \| x-x_0\|\leq t, \qquad    \eta d(0,D_{\Delta}(x))=\eta d(0,D_C(x)) \leqslant -\frac{f_{ \xi, \alpha}(t)}{f_{ \xi, \alpha}'(t)}, \qquad t<n_{f_{\xi,\alpha}}(t)<t_*.
\end{equation}
Now, as  $y \in G_F(x)$  there exists  $d \in \bar{D}_\Delta(x)$  such that  $y=x+d$. Using the definition of the set $ \bar{D}_\Delta(x)$ in \eqref{def11}
and  the second  inequality in \eqref{eq:eq.aux.k} it  follows that
$$
\|d\|\leq \eta d(0,D_{\Delta}(x))= \eta d(0,D_{C}(x)) \leq -f_{ \xi, \alpha}(t)/f_{ \xi, \alpha}'(t).
$$
Since $d=y-x$,  the  last  inequality  together with the  definition in \eqref{eq:n.f.2} implies item {\bf i}.

Triangular inequality combined with the first  inequality in \eqref{eq:eq.aux.k},  item {\bf i}, and the last inequality in \eqref{eq:eq.aux.k} yields
\begin{equation}\label{eq:110}
\|y-x_0\| \leq \| y-x\|+ \|x-x_0\| \leq n_{f_{ \xi, \alpha}}(t)<t_*,
\end{equation}
which proves item {\bf ii}.

Since  \(\|y-x_0\|  <t_*\) and     $ t_*\leq r_{x_0}$ we obtain by the quasi regularity assumption
$$
D_C(y)\neq \emptyset,  \qquad d(0,D_C(y))\leq \beta_{x_0}(\|y-x_0\|) d(F(y),C).
$$
As $x\in  K(t)\subset K $ and $y-x=d \in D_\Delta(x)$,  it follows from Proposition \ref{sss} that
\[
F(x)+F'(x)(y-x)\in C.
\]
Therefore,  taking into account   that $\eta \geq 1$, by using the above inequality and the last  inclusion it is easy to conclude that
$$
\eta d(0,D_C(y))\leq  \eta \beta_{x_0}(\|y-x_0\|)\|F(y)-F(x)-F'(x)(y-x)\|.
$$
On the other hand, from item {\bf i} we have  $\|y-x\|\leq n_{f_{\xi,\alpha}}(t)-t$
and,  as  $\| x-x_0\|\leq t$, by using  Lemma \ref{pr:taylor}  we have
$$
\|F(y)-F(x)-F'(x)(y-x)\|\leq [f(n_{f_{\xi,\alpha}}(t))-f(t)-f'(t)(n_{f_{\xi,\alpha}}(t)-t)]\left( \frac{\|y-x\|}{n_{f_{\xi,\alpha}}(t)-t}\right)^2.
$$
Hence, combining  the two above inequalities  we conclude that
\begin{equation} \label{eq:mtae}
    \eta d(0,D_C(y))\leq  \eta \beta_{x_0}(\|y-x_0\|)[f(n_{f_{\xi,\alpha}}(t))-f(t)-f'(t)(n_{f_{\xi,\alpha}}(t)-t)]\left( \frac{\|y-x\|}{n_{f_{\xi,\alpha}}(t)-t}\right)^2.
\end{equation}
Now, the definition in \eqref{eq:n.f.2} implies  that $f_{\xi,\alpha}(t)+f'_{\xi,\alpha}(t)(n_{f_{\xi,\alpha}}(t)-t)=0$. So, we have
$$
f_{\xi,\alpha}(n_{f_{\xi,\alpha}}(t))=f_{\xi,\alpha}(n_{f_{\xi,\alpha}}(t))-f_{\xi,\alpha}(t)-f'_{\xi,\alpha}(t)(n_{f_{\xi,\alpha}}(t)-t)
$$
By using the definition in  \eqref{defh} and after simple algebraic manipulation, the last equality becomes
$$
f_{\xi,\alpha}(n_{f_{\xi,\alpha}}(t))=\alpha\left(f(n_{f_{\xi,\alpha}}(t))-f(t)-f'(t)(n_{f_{\xi,\alpha}}(t)-t)\right).\\
$$
So,  as $\beta_{x_0}$ is an increasing function,  by a simple combination of \eqref{eq:110},  \eqref{eq:mtae} and the last equality, we obtain
$$
   \eta d(0,D_{C}(y))\leq   \frac{\eta\beta_{x_0}(n_{f_{\xi,\alpha}}(t))}{\alpha}f_{\xi,\alpha}(n_{f_{\xi,\alpha}}(t))\left( \frac{\|y-x\|}{n_{f_{\xi,\alpha}}(t)-t}\right)^2.$$
From Proposition~\ref{pr:1.5} and the first statement in Proposition~\ref{pr:2} we have  $\xi \leq n_{f_{\xi,\alpha}}(t)<t_*$. Thus, by using the last inequality  and  Proposition~\ref{nnn},  the last  inequality of  the lemma follows.
\end{proof}
In the next result we prove  the desired result, namely,  that the Gauss-Newton iteration multifunction  is ``well behaved'' on the subsets defined in \eqref{E:K}.
\begin{lemma} \label{NfNF}
For each $t\in [0, t_*)$,  the following inclusions  hold: $ K(t)\subset B(x_0,t_*)$ and
$$
G_F\left( K(t) \right)\subset K\left( n_{f_{\xi,\alpha}}(t) \right).
$$
As a consequence,
$K\subset B(x_0,t_*)$ and $G_F(K)\subset K.$
\end{lemma}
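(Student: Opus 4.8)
The plan is to extract everything from Lemma~\ref{First} together with the elementary facts about the Newton iteration map $n_{f_{\xi,\alpha}}$ recorded in Proposition~\ref{pr:2} and the sign information in Proposition~\ref{pr:1}. First I would dispose of $K(t)\subset B(x_0,t_*)$: by the definition of $K(t)$ in \eqref{E:K}, every $x\in K(t)$ satisfies $\|x-x_0\|\leq t<t_*$, so $x\in B(x_0,t_*)$; taking the union over $t\in[0,t_*)$ then yields $K\subset B(x_0,t_*)$ at once.

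For the main inclusion, I would fix $t\in[0,t_*)$, take $x\in K(t)$ and $y\in G_F(x)$, and check that $y$ meets the two defining conditions of $K\bigl(n_{f_{\xi,\alpha}}(t)\bigr)$. Note first that $n_{f_{\xi,\alpha}}(t)\in[0,t_*)$ by Proposition~\ref{pr:2}, so this set is well defined. Item~\textbf{ii} of Lemma~\ref{First} gives directly $\|y-x_0\|\leq n_{f_{\xi,\alpha}}(t)$, which is the first condition in \eqref{E:K}. For the second, I would start from item~\textbf{iii} of Lemma~\ref{First},
$$
\eta\,d(0,D_C(y))\leq -\frac{f_{\xi,\alpha}(n_{f_{\xi,\alpha}}(t))}{f'_{\xi,\alpha}(n_{f_{\xi,\alpha}}(t))}\left(\frac{\|y-x\|}{n_{f_{\xi,\alpha}}(t)-t}\right)^2,
$$
and observe that by item~\textbf{i} of Lemma~\ref{First} we have $\|y-x\|\leq n_{f_{\xi,\alpha}}(t)-t$, while $t<n_{f_{\xi,\alpha}}(t)$ (Proposition~\ref{pr:2}) makes the denominator positive; hence the squared ratio lies in $[0,1]$. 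Since $-f_{\xi,\alpha}(s)/f'_{\xi,\alpha}(s)\geq 0$ for every $s\in[0,t_*)$ by Proposition~\ref{pr:1}, evaluating at $s=n_{f_{\xi,\alpha}}(t)$ and multiplying by a factor in $[0,1]$ can only decrease the right-hand side, so
$$
\eta\,d(0,D_C(y))\leq -\frac{f_{\xi,\alpha}(n_{f_{\xi,\alpha}}(t))}{f'_{\xi,\alpha}(n_{f_{\xi,\alpha}}(t))},
$$
which is exactly the second condition for $y\in K\bigl(n_{f_{\xi,\alpha}}(t)\bigr)$. This proves $G_F(K(t))\subset K\bigl(n_{f_{\xi,\alpha}}(t)\bigr)$.

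The consequence $G_F(K)\subset K$ then follows formally: for $x\in K$ choose $t\in[0,t_*)$ with $x\in K(t)$, so that $G_F(x)\subset K\bigl(n_{f_{\xi,\alpha}}(t)\bigr)$, and since $n_{f_{\xi,\alpha}}(t)\in[0,t_*)$ this set is one of the members of the union \eqref{eq:def.K}, hence contained in $K$; unioning over $x\in K$ gives the claim, while $K\subset B(x_0,t_*)$ was already noted. I do not anticipate a genuine obstacle here: all the analytic work is already done inside Lemma~\ref{First}, and the only points needing a moment's attention are the well-definedness of $K\bigl(n_{f_{\xi,\alpha}}(t)\bigr)$ via Proposition~\ref{pr:2} and the monotonicity/sign remark that permits discarding the squared factor.
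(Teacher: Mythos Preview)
Your proposal is correct and follows essentially the same route as the paper: the paper also obtains the first inclusion directly from the definition of $K(t)$, then for $x\in K(t)$ and $y\in G_F(x)$ combines items~\textbf{i} and~\textbf{iii} of Lemma~\ref{First} to drop the squared ratio and get the second membership condition, with item~\textbf{ii} supplying the first; the consequences are then drawn from the definitions exactly as you do. Your write-up is in fact slightly more explicit than the paper's about why the squared factor may be discarded (sign from Proposition~\ref{pr:1} and the bound from item~\textbf{i}), but the argument is the same.
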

\begin{proof}
The first inclusion follows trivially from the definition of $K(t)$.  Take $x\in K(t)$ and  $y \in G_F(x)$.
Combining items   {\bf i} and {\bf iii} of Lemma~\ref{First} we have
$$
     \eta d(0,D_{C}(y))\leq -\frac{f_{\xi,\alpha}(n_{f_{\xi,\alpha}}(t))}{f'_{\xi,\alpha}(n_{f_{\xi,\alpha}}(t))}.
$$
The last inequality together with item {\bf ii}  of Lemma~\ref{First}  and the definition in \eqref{E:K} show us  that $y\in
K(n_{f_{\xi,\alpha}}(t))$, which proves the second inclusion.

The next inclusion, first on the second sentence, follows trivially
from definitions \eqref{E:K} and \eqref{eq:def.K}.  To verify the last
inclusion, take $x\in K$.  Therefore, $x\in K(t)$ for some $t\in [0,t_*)$.
Using the  first part of the lemma, we conclude that $G_F(x)\subset
K(n_{f_{\xi,\alpha}}(t))$. To end the proof, note that $n_{f_{\xi,\alpha}}(t)\in [0,t_*)$ and use
the definition of $K$.
\end{proof}

Finally, we are ready to prove the main result of this section which
is an immediate consequence of the latter results. First, note that definitions \eqref{def11} and \eqref{NF} imply that the
sequence $\{x_k\}$   satisfies
\begin{equation} \label{NFS}
x_{k+1}\in G_F(x_k),\qquad k=0,1,\ldots \,,
\end{equation}
which is indeed an equivalent definition of this sequence.
\begin{corollary}\label{auxcor}
The sequence $\{x_k\}$ which is contained in $B(x_0,t_*)$,
converges to a point $x_*\in B[x_0,t_*]$ such that  $F(x_*)\in C$.  Moreover,
  $\{x_k\}$ and $\{t_{k}\}$
 satisfy \eqref{eq:001}, \eqref{eq:bd} and \eqref{eq:002}. Furthermore,  if ${f_{ \xi, \alpha}}$
also satisfies assumption {\bf h4}  then $\{x_{k}\}$  converges $R$-quadratically to $x_*$.
\end{corollary}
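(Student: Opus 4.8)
The plan is to prove everything by a single induction establishing that $x_k\in K(t_k)$ for all $k\ge 0$; once this is in place, the inclusion \eqref{eq:001}, the bounds \eqref{eq:bd}, the estimate \eqref{eq:002} and the convergence rates are all read off from Lemma~\ref{First}, Lemma~\ref{NfNF}, Proposition~\ref{sss} and Corollary~\ref{cr:kanttk}. For the base case $x_0\in K(0)=K(t_0)$, I would note that $0\le\xi<t_*\le r_{x_0}$ (Proposition~\ref{nnn}), so $x_0\in B(x_0,r_{x_0})$ and the quasi-regularity assumption gives $D_C(x_0)\neq\emptyset$ together with $\eta\, d(0,D_C(x_0))\le\eta\beta_{x_0}(0)\,d(F(x_0),C)$; by the first part of hypothesis \eqref{a11} this is $\le\xi$, and since Proposition~\ref{pr:10} gives $-f_{\xi,\alpha}(0)/f_{\xi,\alpha}'(0)=\xi$, membership in $K(0)$ follows. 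For the inductive step, \eqref{NFS} gives $x_{k+1}\in G_F(x_k)$, and from $x_k\in K(t_k)$ and Lemma~\ref{NfNF} one obtains $x_{k+1}\in G_F(K(t_k))\subset K(n_{f_{\xi,\alpha}}(t_k))=K(t_{k+1})$, closing the induction.

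With $x_k\in K(t_k)$ available, the definition of $K(t)$ and Corollary~\ref{cr:kanttk} give $\|x_k-x_0\|\le t_k<t_*$, so $\{x_k\}\subset B(x_0,t_*)$. The inclusion \eqref{eq:001} follows because $d_k:=x_{k+1}-x_k\in\bar D_\Delta(x_k)\subset D_\Delta(x_k)$ and, since $x_k\in K$, Proposition~\ref{sss} identifies $D_\Delta(x_k)$ with the solution set of the linearized inclusion $F(x_k)+F'(x_k)d\in C$, $\|d\|\le\Delta$. The first inequality in \eqref{eq:bd} is item~{\bf i} of Lemma~\ref{First} applied to $x=x_k$, $y=x_{k+1}$, using $n_{f_{\xi,\alpha}}(t_k)-t_k=t_{k+1}-t_k$. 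For the second inequality in \eqref{eq:bd} (with $k\ge 1$), I would apply item~{\bf iii} of Lemma~\ref{First} at the \emph{previous} step, i.e.\ with $t=t_{k-1}$, $x=x_{k-1}$, $y=x_k$, which bounds $\eta\, d(0,D_C(x_k))$ by $-\,\frac{f_{\xi,\alpha}(t_k)}{f_{\xi,\alpha}'(t_k)}\bigl(\frac{\|x_k-x_{k-1}\|}{t_k-t_{k-1}}\bigr)^2$; then $\|x_{k+1}-x_k\|=\|d_k\|\le\eta\, d(0,D_\Delta(x_k))=\eta\, d(0,D_C(x_k))$ by the algorithm and Proposition~\ref{sss}, and $-f_{\xi,\alpha}(t_k)/f_{\xi,\alpha}'(t_k)=t_{k+1}-t_k$ finishes it.

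Convergence is then the standard majorization argument: summing the first inequality in \eqref{eq:bd} gives $\|x_m-x_k\|\le\sum_{j=k}^{m-1}(t_{j+1}-t_j)=t_m-t_k$ for $m>k$, and since $\{t_k\}$ converges to $t_*$ (Corollary~\ref{cr:kanttk}) it is Cauchy, hence so is $\{x_k\}$, giving $x_k\to x_*$ for some $x_*\in\mathbb{R}^n$; letting $m\to\infty$ in the last display yields \eqref{eq:002} and $\|x_*-x_0\|\le t_*$, i.e.\ $x_*\in B[x_0,t_*]$. To get $F(x_*)\in C$, I would use that $C$ is closed (it is the sublevel set $\{z:h(z)\le\min h\}$ of the continuous convex function $h$) and pass to the limit in \eqref{eq:001}: since $\|x_{k+1}-x_k\|\le t_{k+1}-t_k\to 0$ and $F,F'$ are continuous, $F(x_k)+F'(x_k)(x_{k+1}-x_k)\to F(x_*)\in C$. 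Finally, \eqref{eq:002} transfers the rate: $t_*-t_k\le t_*2^{-k}$ gives $R$-linear convergence of $\{x_k\}$, and if {\bf h4} holds then Corollary~\ref{cr:kanttk} gives $Q$-quadratic convergence of $\{t_k\}$, so $\|x_*-x_k\|\le t_*-t_k$ makes $\{x_k\}$ converge $R$-quadratically.

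There is no real obstacle here: the substantive work was front-loaded into Lemmas~\ref{First} and~\ref{NfNF}, and this corollary is essentially their bookkeeping. The only points needing a little care are the base case of the induction (matching the constant $\xi$ with $-f_{\xi,\alpha}(0)/f_{\xi,\alpha}'(0)$ via the first half of \eqref{a11}), the index shift in the derivation of the second inequality of \eqref{eq:bd} (Lemma~\ref{First}~{\bf iii} at step $k-1$ controlling the increment at step $k$), and invoking closedness of $C$ to conclude $F(x_*)\in C$.
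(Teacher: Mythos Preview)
Your proposal is correct and follows essentially the same approach as the paper's proof: establish $x_k\in K(t_k)$ by induction (base case via the first half of \eqref{a11} and Proposition~\ref{pr:10}, inductive step via Lemma~\ref{NfNF} and \eqref{NFS}), then read off \eqref{eq:001} from Proposition~\ref{sss}, the first inequality in \eqref{eq:bd} from Lemma~\ref{First}~{\bf i}, the second from Lemma~\ref{First}~{\bf iii} applied at the previous step, and finish with the Cauchy/majorization argument and Corollary~\ref{cr:kanttk} for the rates. The only cosmetic difference is that the paper first argues $\{x_k\}\subset K$ via the coarser inclusion $G_F(K)\subset K$ before redoing the finer induction $x_k\in K(t_k)$, whereas you go straight to the latter; your organization is slightly more economical but not a different method.
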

\begin{proof}
Since $x_0\in B(x_0,t_*)\subseteq B(x_0,r_{x_0})$; by using the quasi regularity assumption,
 $\eta \geq 1$, the first inequality in \eqref{a11}, and Proposition \ref{pr:10}; we obtain
$$
 D_{C}(x_0)\neq \emptyset, \qquad  \eta d(0,D_{C}(x_0))\leq  \eta \beta_{x_0}(0)d(F(x_0),C)\leq\xi=-\frac{f_{ \xi, \alpha}(0)}{f_{ \xi, \alpha}'(0)}.$$
Therefore,
\[  x_0\in K(0)\subset K,\]
where the second inclusion follows trivially from \eqref{eq:def.K}.
Using the above inclusion, the inclusions $G_F(K)\subset K$ (Lemma \ref{NfNF}) and   \eqref{NFS}, we conclude that  the sequence
$\{x_k\}$   rests in $K$ and, in particular, we have $\{x_k\}$ contained in $B(x_0, t_*)$. Since $\{x_k\}\subset K$,  by combining   Proposition~\ref{sss} and  {\bf Algorithm \ref{algor1}}, the inclusion in \eqref{eq:001} follows.
Now, we  prove  by induction that
\begin{equation}
        \label{eq:xktk}
        x_k\in K(t_{k}), \qquad k=0,1,\ldots \,.
\end{equation}
The above inclusion, for  $k=0$, is the first result in this proof.
Assume that $x_k\in K(t_{k})$.   From  \eqref{eq:tknk} we have $t_{k+1}=\eta_{f_{\xi, \alpha}}(t_k)$ and, as $x_k\in K(t_{k})$,  Lemma \ref{NfNF} implies that $G_F(x_k)\subset K(t_{k+1})$, which taking into account  \eqref{NFS} completes the induction  proof.

Simple combination  of {\bf Algorithm \ref{algor1}} with
\eqref{eq:xktk}, Proposition \ref{sss},    and  \eqref{E:K}  yields
\begin{equation} \label{eq:itf}
\|x_{k+1}-x_k\|\leq \eta d(0,D_\Delta(x_k))= \eta d(0,D_{C}(x_k))
\leqslant -\frac{f_{ \xi, \alpha}(t_{k})}{f_{ \xi, \alpha}'(t_{k})}, \qquad k=0,1,\ldots\,,
\end{equation}
which,  using  \eqref{ns.KT}  becomes
$$
         \|x_{k+1}-x_k\|\leq t_{k+1}-t_{k}, \qquad k=0,1,\ldots\,.
$$
So, the first inequality in \eqref{eq:bd} holds. On the other hand, as $\{t_{k}\}$ converges to $t_*$, the above inequalities imply
that
\[
   \sum_{k=k_0}^\infty \|x_{k+1}-x_k\|\leq
   \sum_{k=k_0}^\infty t_{k+1}-t_{k} =t_*-t_{k_0}<+\infty,
\]
for any $k_0\in\mathbb{N}$. Hence, $\{x_k\}$ is a Cauchy sequence in
$B(x_0, t_*)$ and so converges to some $x_*\in B[x_0,t_*]$. Moreover, the above inequality also implies \eqref{eq:002}, i.e., $\|x_*-x_k\|\leq
t_*-t_{k}$, for any $k$.
As C is closed,  $\{x_k\}$ converges to $x_*$,
$$
F(x_k)+F'(x_k)(x_{k+1}-x_k) \in C,
$$
 and $F$ is a continuously differentiable function; therefore, we have $F(x_*) \in C$.

In order to prove  the second  inequality in \eqref{eq:bd}, first note that $  x_k\in K(t_{k})$ and $t_{k+1}=n_{f_{\xi,\alpha}}(t_k)$, for all $k=0,1,\ldots .$
Thus,  take an arbitrary $k$ and apply item~{\bf iii} of  Lemma~\ref{First}  with $y=x_k$, $x=x_{k-1}$  and $t=t_{k-1}$
to obtain
$$
\eta d(0,D_{C}(x_k))\leq -\frac{f_{\xi,\alpha}(t_k)}{f'_{\xi,\alpha}(t_k)}\left( \frac{\|x_k-x_{k-1}\|}{t_{k}-t_{k-1}}\right)^2,
$$
which, using \eqref{ns.KT} and the first inequality in \eqref{eq:itf} yields the desired  inequality.

To end the proof,  combine \eqref{eq:002} with the last inequality in Corollary~\ref{cr:kanttk}.
\end{proof}

Therefore, it follows from Corollaries \ref{cr:kanttk} and \ref{auxcor} that all statements in Theorem \ref{th:knt} are valid.

\section{Special cases} \label{sec:ec}
In this section, we present special cases for Theorem~\ref{th:knt}.
They include the case  where $x_0$ is a regular point of the inclusion  \eqref{eq:cc},  and the case where $x_0$  satisfies the Robinson condition. Moreover, we  present the result of convergence under the Lipschitz and Smale conditions.

\subsection{Convergence result for regular  starting point}
In this section we present a correspondent theorem to Theorem~\ref{th:knt}, namely,   we assume that  $x_0$ is a regular point of the inclusion \eqref{eq:cc}, see \cite{burke} and  references therein.
We also present results of convergence under the Lipschitz and Smale condition. We start by defining regularity.
\begin{definition}\label{regular2}
Let $F:\mathbb{R}^n \to \mathbb{R}^{m}$ be a continuously differentiable
function and  let $h:\mathbb{R}^{m}\to \mathbb{R}$ be a real-valued convex function with minimizer set $C$ nonempty.
A point  $x_0 \in \mathbb{R}^n$  is a regular point of the inclusion $F(x)\in C$ if
$$
Ker(F'(x_0)^T)\cap(C-F(x_0))^{o}=\{0\},
$$
\end{definition}
As we know (see \cite{chon10})  the  definition of a quasi-regular point extends the definition of a regular point.
The following proposition  relates these two concepts, where the existence
of constants r and $\beta$ is due to Burke and Ferris in \cite{burke}, and the second assertion then
follows from  Remark~\ref{rem:1}.
\begin{proposition}\label{regular}
Let $x_0 \in \mathbb{R}^n$ be a regular point of the inclusion $F(x)\in C$. Then there exist
constants $r>0$ and  $\beta>0$ such that
$$
D_C(x)\neq \emptyset \quad e \quad d(0,D_C(x))\leq\beta d(F(x),C),  \qquad \forall x \in B(x_0,r).
$$
 Consequently,
$x_0$ is a quasi-regular point with the quasi-regular radius $r_{x_0}\geq r$ and the quasi-regular bound function
$\beta_{x_0} (\cdot)\leq \beta$ on $[0,r)$, as defined in \eqref{eq:qrr} and \eqref{eq:qrf}, respectively.
\end{proposition}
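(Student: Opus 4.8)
The plan is to prove Proposition~\ref{regular} in two parts, mirroring its statement. First I would establish the existence of constants $r>0$ and $\beta>0$ with $D_C(x)\neq\emptyset$ and $d(0,D_C(x))\leq \beta\, d(F(x),C)$ on $B(x_0,r)$; this is precisely the content of the regularity analysis of Burke and Ferris, so I would invoke \cite{burke} directly. The key point is that regularity, i.e. $\mathrm{Ker}(F'(x_0)^T)\cap (C-F(x_0))^o=\{0\}$, is an open-type condition: by a standard Robinson-type stability / open mapping argument (Robinson's theorem on regularity of convex inclusions), the linearized inclusion $F(x)+F'(x)d\in C$ remains solvable with a linear (in the residual $d(F(x),C)$) bound on the minimal-norm solution, uniformly for $x$ near $x_0$, since $F$ and $F'$ are continuous and the regularity condition persists under small perturbations.

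Next I would deduce the ``consequently'' clause. Given the constants $r,\beta$ from the first part, the constant function $\beta(\cdot)\equiv\beta$ on $[0,r)$ is trivially an increasing positive-valued function satisfying \eqref{eq:dqr}, so $\beta\in\mathcal{B}_r(x_0)$, $x_0$ is a quasi-regular point, and by the definition of $r_{x_0}$ in \eqref{eq:qrr} we immediately get $r_{x_0}\geq r$. For the bound on $\beta_{x_0}$, I would argue that since the constant function $\beta$ lies in $\mathcal{B}_r(x_0)$ and $\lim_{t\to r^-}\beta(t)=\beta<+\infty$, Remark~\ref{rem:1} applies and gives $\beta_{x_0}(t)=\inf\{\tilde\beta(t):\tilde\beta\in\mathcal{B}_r(x_0)\}\leq \beta$ for all $t\in[0,r)$, which is exactly the asserted inequality $\beta_{x_0}(\cdot)\leq\beta$ on $[0,r)$.

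The main obstacle is conceptual rather than computational: the first part is not something one proves from scratch here but rather imports wholesale from \cite{burke}, so the ``proof'' is really a careful citation plus a verification that the objects defined in \cite{burke} match the quasi-regularity framework of Definition~\ref{qr}. In particular one must check that the set $D_C(x)$ used here coincides (or is compatible) with the solution set of the linearized inclusion studied by Burke and Ferris, and that their regularity hypothesis is literally Definition~\ref{regular2}. Once that identification is made, the second part is a routine unwinding of the definitions \eqref{eq:qrr}, \eqref{eq:qrf} together with Remark~\ref{rem:1}, with no estimates to grind through.

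Thus the proof reduces to: (i) cite \cite{burke} for the existence of $r,\beta$; (ii) observe $\beta\in\mathcal{B}_r(x_0)$, hence $x_0$ is quasi-regular and $r_{x_0}\geq r$; (iii) apply Remark~\ref{rem:1}, using $\lim_{t\to r^-}\beta<+\infty$, to conclude $\beta_{x_0}\leq\beta$ on $[0,r)$.
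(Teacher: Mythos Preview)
Your proposal is correct and matches the paper's own treatment essentially verbatim: the paper does not give a self-contained proof but states that the existence of $r$ and $\beta$ is due to Burke and Ferris \cite{burke}, and that the second assertion then follows from Remark~\ref{rem:1}. Your write-up simply fleshes out those two sentences, including the observation that the constant function $\beta$ lies in $\mathcal{B}_r(x_0)$ and has finite limit at $r^-$, which is exactly what is needed to invoke Remark~\ref{rem:1}.
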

From now on, for each   regular point $x_0 \in \mathbb{R}^n$  of the inclusion  $F(x)\in C$ we will denote  by $r>0$ and $\beta>0$, the associated  constants given by the last proposition.
\begin{theorem}\label{th:kntrp}
Let $F:\mathbb{R}^n \to \mathbb{R}^{m}$ be a continuously differentiable
function. Assume that $R>0$, $x_0 \in \mathbb{R}^n$ and $f:[0,\; R)\to \mathbb{R}$ is a
 majorant function for $F$ on  $B(x_0,R).$ Take the constants $\alpha>0$ and
 $\xi>0$ and consider the
auxiliary function $f_{ \xi, \alpha}:[0,R)\to \mathbb{R}$,
$$
  f_{ \xi, \alpha}(t)=\xi+(\alpha-1)t+\alpha f(t).
$$
 If,  $f_{ \xi, \alpha}$ satisfies {\bf h3}, i.e., $t_*$ is the smallest zero of $f_{ \xi, \alpha},$  then  the sequence
  generated by Newton's Method  for solving $ f_{ \xi, \alpha}(t)=0,$  with starting
  point $t_{0}=0$,
$$
    t_{k+1} ={t_{k}}-f_{ \xi, \alpha}'(t_{k}) ^{-1}f_{ \xi, \alpha}(t_{k}),\quad k=0,1,\ldots\,,
$$
  is well defined, $\{t_{k}\}$ is strictly increasing, is contained in
  $[0,t_*)$, and converges $Q$-linearly to $t_*$.
Let  \(\eta \in [1,\infty)\),  \( \Delta \in (0,\infty]\) and $h:\mathbb{R}^{m}\to \mathbb{R}$  be a real-valued convex function
with minimizer set $C$ nonempty. Suppose that $x_0 \in \mathbb{R}^n$ is a
regular point of the inclusion $F(x)\in C$ with  associated constants $r>0$ and $\beta>0$.  If    \(d(F(x_0),C) >0\),   $t_* \leq r$,
$$
  {\Delta}\geq \xi \geq \eta \beta d(F(x_0),C), \qquad \alpha\geq {\eta\beta}/({\eta\beta[f'({\xi})+1]+1}),
$$
 then the sequence generated by {\bf Algorithm \ref{algor1}}, denoted by $\{x_k\},$ is contained in $B(x_0,
  t_*)$,
$$
F(x_k)+F'(x_k)(x_{k+1}-x_{k}) \in C,  \quad k=0,1, \ldots\, ,
$$
satisfies the inequalities
$$
  \|x_{k+1}-x_{k}\|   \leq  t_{k+1}-t_{k} , \qquad \|x_{k+1}-x_{k}\|\leq   \frac{t_{k+1}-t_{k}}{(t_{k}-t_{k-1})^2} \|x_k-x_{k-1}\|^2,
$$
for \( k=0, 1, \ldots\, , \) and \( k=1,2, \ldots\, \), respectively,  converging  to a point $x_*\in B[x_0, t_*]$ such that  $F(x_*)\in C,$
$$
 \|x_*-x_{k}\|   \leq  t_*-t_{k}, \qquad k=0,1, \ldots\,
$$
and  the convergence is  $R$-linear. If, additionally,   $f_{\xi, \alpha}$ satisfies  {\bf h4}    then the sequences $\{t_k\}$ and  $\{x_k\}$ converge $Q$-quadratically  and $R$-quadratically
 to $t_*$ and $x_*$, respectively.
\end{theorem}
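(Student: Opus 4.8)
The plan is to obtain Theorem~\ref{th:kntrp} as an immediate corollary of Theorem~\ref{th:knt}. The bridge is Proposition~\ref{regular}: if $x_0$ is a regular point of $F(x)\in C$ with associated constants $r>0$ and $\beta>0$, then $x_0$ is a quasi-regular point whose quasi-regular radius obeys $r_{x_0}\geq r$ and whose quasi-regular bound function obeys $\beta_{x_0}(t)\leq\beta$ for all $t\in[0,r)$. Moreover, every assertion about the scalar Newton sequence $\{t_k\}$ — well-definedness, strict monotonicity, containment in $[0,t_*)$, $Q$-linear convergence to $t_*$, and $Q$-quadratic convergence under {\bf h4} — is already established in Corollary~\ref{cr:kanttk}, so nothing new is needed there. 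Thus it suffices to verify that the quantitative hypotheses of Theorem~\ref{th:knt} (those involving $t_*\leq r_{x_0}$, $\xi$, and $\alpha$) follow from the hypotheses imposed here.

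Two of the three are immediate. Since $t_*\leq r\leq r_{x_0}$, the condition $t_*\leq r_{x_0}$ holds. Since $\beta_{x_0}(0)\leq\beta$, the chain $\Delta\geq\xi\geq\eta\beta\,d(F(x_0),C)\geq\eta\beta_{x_0}(0)\,d(F(x_0),C)$ yields the first inequality in \eqref{a11}. The only part with any content is the requirement
\[
\alpha\ \geq\ \sup\left\{\frac{\eta\beta_{x_0}(t)}{\eta\beta_{x_0}(t)[f'(t)+1]+1}\ :\ \xi\leq t<t_*\right\},
\]
which I would deduce from the hypothesis $\alpha\geq\eta\beta/(\eta\beta[f'(\xi)+1]+1)$ by an elementary monotonicity argument.

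Concretely: by {\bf h1} and {\bf h2}, $f'$ is increasing with $f'(0)=-1$, so $a:=f'(t)+1\geq 0$ for every $t\in[0,R)$. For fixed $a\geq0$ the map $s\mapsto s/(sa+1)$ has derivative $(sa+1)^{-2}>0$ on $[0,\infty)$, hence is increasing; applying this with $s=\eta\beta_{x_0}(t)\leq\eta\beta$ gives $\eta\beta_{x_0}(t)/(\eta\beta_{x_0}(t)[f'(t)+1]+1)\leq\eta\beta/(\eta\beta[f'(t)+1]+1)$. For fixed $s>0$ the map $a\mapsto s/(sa+1)$ has derivative $-s^2(sa+1)^{-2}<0$, hence is decreasing; since $f'(t)+1\geq f'(\xi)+1$ whenever $t\geq\xi$ (again because $f'$ is increasing), this gives $\eta\beta/(\eta\beta[f'(t)+1]+1)\leq\eta\beta/(\eta\beta[f'(\xi)+1]+1)\leq\alpha$ for every $t\in[\xi,t_*)$. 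Taking the supremum over $t\in[\xi,t_*)$ yields the second inequality in \eqref{a11}. With all hypotheses of Theorem~\ref{th:knt} verified, that theorem applies directly and produces every conclusion claimed in Theorem~\ref{th:kntrp}: the containment $\{x_k\}\subset B(x_0,t_*)$, the inclusions $F(x_k)+F'(x_k)(x_{k+1}-x_k)\in C$, the two estimates on $\|x_{k+1}-x_k\|$, the convergence of $\{x_k\}$ to some $x_*\in B[x_0,t_*]$ with $F(x_*)\in C$, the bound $\|x_*-x_k\|\leq t_*-t_k$, the $R$-linear rate, and the $R$-quadratic rate under {\bf h4}. I expect the only genuine (and still very mild) obstacle to be the monotonicity estimate for $\alpha$ just described; the rest is a bookkeeping reduction to the already-proved Theorem~\ref{th:knt}.
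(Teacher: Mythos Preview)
Your proposal is correct and follows essentially the same approach as the paper: reduce to Theorem~\ref{th:knt} via Proposition~\ref{regular}, check that $t_*\leq r\leq r_{x_0}$ and $\Delta\geq\xi\geq\eta\beta_{x_0}(0)d(F(x_0),C)$, and then verify the supremum condition on $\alpha$ by the same two-step monotonicity (in $\beta_{x_0}(t)\leq\beta$ and in $f'(t)+1\geq f'(\xi)+1$). The paper compresses the latter into ``after simple algebraic manipulation,'' while you spell out the derivatives of $s\mapsto s/(sa+1)$ and $a\mapsto s/(sa+1)$; otherwise the arguments coincide.
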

\begin{proof}
Since  $x_0$ is a regular point for the inclusion, we have from Proposition~\ref{regular}  that   $x_0$ is a quasi-regular point for the inclusion
 $F(x)\in C$ with the quasi-regular radius   $r_{x_0}\geq r$. So, taking into account the assumption $t_*\leq r$ we obtain
$$t_*<r_{x_0}.$$
Moreover,  Proposition~\ref{regular} also implies that the  quasi-regular bound function
\begin{equation}\label{212121}
\beta_{x_0}(t)\leq \beta, \qquad \forall~t~\in [0,r).
\end{equation}
Since $ \Delta \geq \xi\geq \eta\beta d(F(x_0),C)$ and   the last inequality implies that   $\beta_{x_0}(0)\leq \beta$,  we have
$$
  \Delta \geq \xi\geq \eta\beta_{x_0}(0)d(F(x_0),C).
 $$
Now, combining  the assumptions  $0<\xi$ and $t_* \leq r$ with the  first statement in Proposition~\ref{nnn}  we conclude that $0<\xi<t_* \leq r$.
 So,  using \eqref{212121}, $f'(0)=-1$,  $f'$ as strictly increasing and $\eta \geq1$; after simple algebraic manipulation we obtain
$$
\frac{\eta\beta}{\eta\beta[f'({\xi})+1]+1}\geq \frac{\eta\beta_{x_0}(t)}{\eta\beta_{x_0}(t)[f'(t)+1]+1}, \qquad \forall~t~\in~[\xi,t_*).
$$
Hence, the assumption $ \alpha \geq {\eta\beta}/({\eta\beta[f'({\xi})+1]+1})$ and the last inequality imply that
$$
 \alpha \geq  \sup\left\{\frac{\eta\beta_{x_0}(t)}{\eta\beta_{x_0}(t)[f'(t)+1]+1}:{\xi}\leq t < t_*\right\}.
$$
Therefore,   $F$ and $x_0$ satisfy all assumptions in Theorem~\ref{th:knt}   and consequently the statements of the theorem are satisfied.
\end{proof}
Under  the Lipschitz condition, Theorem \ref{th:kntrp} becomes:
\begin{theorem}\label{th:kntrplip}
Let $F:\mathbb{R}^n \to \mathbb{R}^{m}$ be a continuously differentiable
function. Assume that $x_0 \in \mathbb{R}^n$, $R>0$ and $K>0,$ such that
 $$
   \|F'(y)-F'(x)\| \leq
   K\|y-x\|,\qquad x,y \; \in B(x_0,R).
$$
Take the constants $\alpha>0$ and
 $\xi>0$ and consider the
auxiliary function $f_{ \xi, \alpha}:[0,R)\to \mathbb{R}$,
$$
  f_{ \xi, \alpha}(t)=\xi-t+ (\alpha Kt^2)/2.
$$
 If $ 2\alpha  K \xi \leq1$, then $t_*=(1-\sqrt{1-2\alpha K \xi}\, )/({\alpha K})$ is the smallest zero of $f_{ \xi, \alpha},$   the sequence
  generated by Newton's Method  for solving $ f_{ \xi, \alpha}(t)=0,$  with starting
  point $t_{0}=0$,
$$
    t_{k+1} ={t_{k}}-f_{ \xi, \alpha}'(t_{k}) ^{-1}f_{ \xi, \alpha}(t_{k}),\quad k=0,1,\ldots\,,
$$
 is well defined, $\{t_{k}\}$ is strictly increasing, is contained in
  $[0,t_*)$, and converges $Q$-linearly to $t_*$.
Let  \(\eta \in [1,\infty)\),  \( \Delta \in (0,\infty]\) and $h:\mathbb{R}^{m}\to \mathbb{R}$  be a real-valued convex function
with minimizer set $C$ nonempty. Suppose that $x_0 \in \mathbb{R}^n$ is a
regular point of the inclusion $F(x)\in C$ with  associated constants $r>0$ and $\beta>0$.  If    \(d(F(x_0),C) >0\),   $t_* \leq r$,
$$
  {\Delta}\geq \xi \geq \eta \beta d(F(x_0),C), \qquad \alpha \geq {\eta\beta}/({K\eta\beta\xi+1}),
$$
 then the sequence generated by {\bf Algorithm \ref{algor1}}, denoted by $\{x_k\},$ is contained in $B(x_0,
  t_*)$,
$$
F(x_k)+F'(x_k)(x_{k+1}-x_{k}) \in C,  \quad k=0,1, \ldots\, ,
$$
satisfies the inequalities
$$
  \|x_{k+1}-x_{k}\|   \leq  t_{k+1}-t_{k} , \qquad \|x_{k+1}-x_{k}\|\leq   \frac{t_{k+1}-t_{k}}{(t_{k}-t_{k-1})^2} \|x_k-x_{k-1}\|^2,
$$
for \( k=0, 1, \ldots\, , \) and \( k=1,2, \ldots\, \), respectively,  converging  to a point $x_*\in B[x_0, t_*]$ such that  $F(x_*)\in C,$
$$
 \|x_*-x_{k}\|   \leq  t_*-t_{k}, \qquad k=0,1, \ldots\,
$$
and  the convergence is  $R$-linear. If, additionally,   $ 2\alpha  K \xi <1$   then the sequences $\{t_k\}$ and  $\{x_k\}$ converge $Q$-quadratically  and $R$-quadratically
 to $t_*$ and $x_*$, respectively.
\end{theorem}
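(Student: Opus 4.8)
The plan is to obtain Theorem~\ref{th:kntrplip} as a direct specialization of Theorem~\ref{th:kntrp}, by exhibiting the quadratic $f(t):=-t+Kt^2/2$ as a majorant function for $F$ on $B(x_0,R)$ and then checking that each hypothesis of Theorem~\ref{th:kntrp} translates verbatim into the hypotheses stated here.

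First I would verify that $f(t):=-t+Kt^2/2$ is a majorant function for $F$ on $B(x_0,R)$ in the sense of Definition~\ref{majo}. Since $f'(t)=-1+Kt$, conditions \textbf{h1} ($f(0)=0$, $f'(0)=-1$) and \textbf{h2} ($f'$ affine, hence convex, and strictly increasing since $K>0$) are immediate, and $f$ is twice differentiable with $f''\equiv K$. The majorant inequality \eqref{KH} follows directly from the Lipschitz hypothesis:
\[
\|F'(y)-F'(x)\|\le K\|y-x\| = K\bigl(\|y-x\|+\|x-x_0\|\bigr)-K\|x-x_0\| = f'(\|y-x\|+\|x-x_0\|)-f'(\|x-x_0\|).
\]
Substituting this $f$ into \eqref{defh} gives $f_{\xi,\alpha}(t)=\xi+(\alpha-1)t+\alpha(-t+Kt^2/2)=\xi-t+\alpha Kt^2/2$, exactly the auxiliary function in the statement.

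Next I would analyze $f_{\xi,\alpha}$. It is an upward parabola with $f_{\xi,\alpha}(0)=\xi>0$ and roots $(1\pm\sqrt{1-2\alpha K\xi})/(\alpha K)$, which are real precisely when $2\alpha K\xi\le 1$. Under that assumption $t_*:=(1-\sqrt{1-2\alpha K\xi})/(\alpha K)$ is the smaller root, so $f_{\xi,\alpha}>0$ on $(0,t_*)$ and $f_{\xi,\alpha}(t_*)=0$, i.e.\ \textbf{h3} holds (with $t_*<R$, consistent with the subsequent requirement $t_*\le r$). A direct computation gives $f_{\xi,\alpha}'(t_*)=-1+\alpha Kt_*=-\sqrt{1-2\alpha K\xi}$, so \textbf{h4} is equivalent to the strict inequality $2\alpha K\xi<1$, precisely the extra hypothesis imposed here for the quadratic-rate conclusion.

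Finally I would match the remaining hypotheses of Theorem~\ref{th:kntrp}. The conditions that $x_0$ be a regular point with associated constants $r,\beta$, that $d(F(x_0),C)>0$, that $t_*\le r$, and that $\Delta\ge\xi\ge\eta\beta d(F(x_0),C)$ are carried over unchanged. For the constraint on $\alpha$, note $f'(\xi)+1=K\xi$, hence $\eta\beta[f'(\xi)+1]+1=K\eta\beta\xi+1$ and $\eta\beta/(\eta\beta[f'(\xi)+1]+1)=\eta\beta/(K\eta\beta\xi+1)$, so the present hypothesis $\alpha\ge\eta\beta/(K\eta\beta\xi+1)$ is exactly the one required by Theorem~\ref{th:kntrp}. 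With all hypotheses of that theorem in force, its conclusion (well-definedness and $Q$-linear convergence of $\{t_k\}$, containment of $\{x_k\}$ in $B(x_0,t_*)$, the inclusions \eqref{eq:001}, the bounds \eqref{eq:bd} and \eqref{eq:002}, $R$-linear convergence of $\{x_k\}$ to some $x_*\in B[x_0,t_*]$ with $F(x_*)\in C$, and the $Q$- and $R$-quadratic rates under \textbf{h4}) yields Theorem~\ref{th:kntrplip}. There is no substantive obstacle: the only points needing care rather than pure routine are confirming that the quadratic formula selects the smaller zero so \textbf{h3} genuinely holds, and that the sign computation $f_{\xi,\alpha}'(t_*)=-\sqrt{1-2\alpha K\xi}$ correctly pins \textbf{h4} to $2\alpha K\xi<1$.
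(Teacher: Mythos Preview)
Your proposal is correct and follows essentially the same approach as the paper: identify $f(t)=Kt^2/2-t$ as a majorant function for $F$, verify that the resulting auxiliary function coincides with the one in the statement, check that $2\alpha K\xi\le 1$ gives \textbf{h3} with the stated $t_*$ and that the strict inequality gives \textbf{h4}, rewrite the bound on $\alpha$ via $f'(\xi)+1=K\xi$, and invoke Theorem~\ref{th:kntrp}. Your write-up is slightly more explicit (e.g.\ the computation $f_{\xi,\alpha}'(t_*)=-\sqrt{1-2\alpha K\xi}$), but the logical structure is identical to the paper's.
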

\begin{proof}
It is promptly proved that   $f:[0, R)\to \mathbb{R}$ defined by $ f(t)=Kt^{2}/2-t$ is a majorant function for the function F on $B(x_0, R)$.  Hence,
$$
  f_{ \xi, \alpha}(t)=\xi-t+ (\alpha Kt^2)/2=\xi+(\alpha-1)t+\alpha f(t),
$$
 and, since  $ 2\alpha  K \xi \leq1$,   we conclude that $ f_{ \xi, \alpha}$ satisfies {\bf h3}  and   $t_*=(1-\sqrt{1-2\alpha K \xi}\, )/({\alpha K})$ is its smallest root.
In this case,  the constant $\alpha$ satisfies
$$
\alpha\geq \frac{\eta\beta}{1+K\eta\beta\xi}= \frac{\eta\beta}{\eta\beta[f'({\xi})+1]+1} .
$$
Therefore, taking $\alpha$, $ f_{\xi,\alpha}$ and $t_*$  as defined above, all  the statements of the  first part of the theorem follow from Theorem~\ref{th:kntrp}. For proving the second part,  it is sufficient to note that the assumption $ 2\alpha  K \xi <1$ implies that $ f_{ \xi, \alpha}$ satisfies {\bf h4}.
\end{proof}
Under the Smale condition, see \cite{S86}, Theorem \ref{th:kntrp} becomes:

\begin{theorem}\label{th:kntrpsma}
Let $F:\mathbb{R}^n \to \mathbb{R}^{m}$ be an analytic
function. Assume that $x_0 \in \mathbb{R}^n$  and
\begin{equation} \label{eq:SmaleCond}
  \gamma := \sup _{ n > 1 }\left\| \frac
{F^{(n)}(x_0)}{n !}\right\|^{1/(n-1)}<+\infty.
\end{equation}
Take the constants $\alpha>0$ and
 $\xi>0$ and consider the auxiliary function ${f_{\xi,\alpha}}:[0,1/\gamma)\to \mathbb{R}$,
$$
 {f_{\xi,\alpha}}(t)=\frac{\alpha\gamma}{1-\gamma t}t^2-t+\xi.
$$
If  $  \xi\gamma\leq 1+2\alpha-2\sqrt{\alpha(1+\alpha)}$ then
$$
t_*=\frac{1+\gamma\xi-\sqrt{(1+\gamma\xi)^2-4(1+\alpha)\gamma\xi}}{2(1+\alpha)\gamma},
$$
 is the smallest zero of $f_{ \xi, \alpha},$   the sequence
  generated by Newton's Method  for solving $ f_{ \xi, \alpha}(t)=0,$  with starting
  point $t_{0}=0$,
$$
    t_{k+1} ={t_{k}}-f_{ \xi, \alpha}'(t_{k}) ^{-1}f_{ \xi, \alpha}(t_{k}),\quad k=0,1,\ldots\,,
$$
  is well defined, $\{t_{k}\}$ is strictly increasing, is contained in
  $[0,t_*)$, and converges $Q$-linearly to $t_*$.
Let  \(\eta \in [1,\infty)\),  \( \Delta \in (0,\infty]\) and $h:\mathbb{R}^{m}\to \mathbb{R}$ be  a real-valued convex function
with minimizer set $C$ nonempty. Suppose that $x_0 \in \mathbb{R}^n$ is a
regular point of the inclusion $F(x)\in C$ with  associated constants $r>0$ and $\beta>0$.  If    \(d(F(x_0),C) >0\),   $t_* \leq r$,
$$
  {\Delta}\geq \xi \geq \eta \beta d(F(x_0),C), \qquad \alpha \geq \frac{\eta\beta(1-\gamma\xi)^2}{\eta\beta+(1-\eta\beta)(1-\gamma\xi)^2},
$$
 then the sequence generated by {\bf Algorithm \ref{algor1}}, denoted by $\{x_k\},$ is contained in $B(x_0,
  t_*)$,
$$
F(x_k)+F'(x_k)(x_{k+1}-x_{k}) \in C,  \quad k=0,1, \ldots\, ,
$$
satisfies the inequalities
$$
  \|x_{k+1}-x_{k}\|   \leq  t_{k+1}-t_{k} , \qquad \|x_{k+1}-x_{k}\|\leq   \frac{t_{k+1}-t_{k}}{(t_{k}-t_{k-1})^2} \|x_k-x_{k-1}\|^2,
$$
for \( k=0, 1, \ldots\, , \) and \( k=1,2, \ldots\, \), respectively,  converging  to a point $x_*\in B[x_0, t_*]$ such that  $F(x_*)\in C,$
$$
 \|x_*-x_{k}\|   \leq  t_*-t_{k}, \qquad k=0,1, \ldots\,
$$
and  the convergence is  $R$-linear. If, additionally,   $  \xi\gamma< 1+2\alpha-2\sqrt{\alpha(1+\alpha)}$   then the sequences $\{t_k\}$ and  $\{x_k\}$ converge $Q$-quadratically  and $R$-quadratically
 to $t_*$ and $x_*$, respectively.
\end{theorem}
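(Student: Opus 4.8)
The plan is to deduce Theorem~\ref{th:kntrpsma} from Theorem~\ref{th:kntrp} by exhibiting the classical Smale majorant function. First I would set $R:=1/\gamma$ and
$$
f(t):=\frac{t}{1-\gamma t}-2t,\qquad t\in[0,1/\gamma),
$$
and check that $f$ is a majorant function for $F$ on $B(x_0,1/\gamma)$. From $f'(t)=(1-\gamma t)^{-2}-2$ and $f''(t)=2\gamma(1-\gamma t)^{-3}$ one reads off {\bf h1} and {\bf h2} at once. For the inequality \eqref{KH} I would use the Smale bound $\|F^{(n)}(x_0)\|/n!\le\gamma^{n-1}$ for $n\ge2$ to estimate, for $\|x-x_0\|<1/\gamma$,
$$
\|F''(x)\|\le\sum_{n\ge2}\frac{n!}{(n-2)!}\gamma^{n-1}\|x-x_0\|^{n-2}=\frac{2\gamma}{(1-\gamma\|x-x_0\|)^{3}}=f''(\|x-x_0\|),
$$
and then integrate $F''$ along the segment $[x,y]$, using that $f''$ is increasing, exactly as in the derivation of the Smale case from the majorant condition in \cite{FS01,S86} (cf.\ the proof pattern of Lemma~\ref{pr:taylor}); this gives $\|F'(y)-F'(x)\|\le f'(\|y-x\|+\|x-x_0\|)-f'(\|x-x_0\|)$ whenever $\|x-x_0\|+\|y-x\|<1/\gamma$.

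Next I would verify that the function in the statement is the auxiliary function associated with this $f$ in the sense of \eqref{defh}: a direct computation gives $\xi+(\alpha-1)t+\alpha f(t)=\xi-t+\alpha\gamma t^{2}/(1-\gamma t)={f_{\xi,\alpha}}(t)$. Multiplying by $1-\gamma t>0$ on $[0,1/\gamma)$ shows that ${f_{\xi,\alpha}}(t)=0$ is equivalent to the quadratic $(1+\alpha)\gamma t^{2}-(1+\gamma\xi)t+\xi=0$, whose discriminant $(1+\gamma\xi)^{2}-4(1+\alpha)\gamma\xi$, regarded as a quadratic in the variable $\gamma\xi$, is nonnegative precisely for $\gamma\xi\le 1+2\alpha-2\sqrt{\alpha(1+\alpha)}$ (the companion root being $1+2\alpha+2\sqrt{\alpha(1+\alpha)}$). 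Under that hypothesis the smaller root $t_*$ has the displayed closed form, is positive, and satisfies $t_*<1/\gamma$ because $\gamma\xi<1+2\alpha$; since the quadratic opens upward and takes the value $\xi>0$ at $t=0$, it is positive on $(0,t_*)$, whence ${f_{\xi,\alpha}}$ satisfies {\bf h3} with smallest zero $t_*$.

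It then only remains to match the constants with those demanded by Theorem~\ref{th:kntrp}. Using $f'(\xi)+1=(1-\gamma\xi)^{-2}-1$ one computes
$$
\frac{\eta\beta}{\eta\beta[f'(\xi)+1]+1}=\frac{\eta\beta(1-\gamma\xi)^{2}}{\eta\beta+(1-\eta\beta)(1-\gamma\xi)^{2}},
$$
so the bound on $\alpha$ assumed here is literally the bound required in Theorem~\ref{th:kntrp}, while the conditions $d(F(x_0),C)>0$, $\Delta\ge\xi\ge\eta\beta d(F(x_0),C)$ and $t_*\le r$ are identical; hence Theorem~\ref{th:kntrp} applies and yields all the asserted conclusions, including the $R$-linear rate. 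For the final sentence, note that ${f_{\xi,\alpha}}$ is strictly convex with ${f_{\xi,\alpha}}(0)=\xi>0={f_{\xi,\alpha}}(t_*)$, so ${f_{\xi,\alpha}}'(t_*)\le0$ always holds, and ${f_{\xi,\alpha}}'(t_*)<0$ — i.e.\ {\bf h4} — fails only when $t_*$ is a double root, that is, exactly when the discriminant vanishes; thus the strict inequality $\gamma\xi<1+2\alpha-2\sqrt{\alpha(1+\alpha)}$ forces {\bf h4}, and Theorem~\ref{th:kntrp} then gives the $Q$- and $R$-quadratic rates. The only genuinely analytic step is the verification of \eqref{KH} for $f$ (the power-series estimate of $\|F''\|$ and its integration); I expect that to be the main — though essentially routine — obstacle, everything else being elementary algebra built around the quadratic ${f_{\xi,\alpha}}$.
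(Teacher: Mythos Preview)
Your proposal is correct and follows essentially the same route as the paper: both introduce the Smale majorant $f(t)=t/(1-\gamma t)-2t$ on $[0,1/\gamma)$, verify that $f_{\xi,\alpha}$ coincides with the auxiliary function \eqref{defh}, reduce the root condition to the quadratic $(1+\alpha)\gamma t^{2}-(1+\gamma\xi)t+\xi=0$, match the bound on $\alpha$ with $\eta\beta/(\eta\beta[f'(\xi)+1]+1)$, and invoke Theorem~\ref{th:kntrp}. The paper packages your power-series bound on $\|F''(x)\|$ and the subsequent integration as Lemmas~\ref{lemma:qc1} and~\ref{lc}, whereas you carry these out inline, and you give more explicit detail on the discriminant, on $t_*<1/\gamma$, and on why the strict inequality forces {\bf h4}; but the argument is the same.
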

We need the following results to prove the above theorem.
\begin{lemma} \label{lemma:qc1}
Let $F:\mathbb{R}^n \to \mathbb{R}^{m}$ be an analytic function.
 Suppose that
$x_0\in \mathbb{R}^n $  and  $\gamma$ is defined in
\eqref{eq:SmaleCond}. Then, for all $x\in B(x_{0}, 1/\gamma)$
it holds that
$$
\|F''(x)\| \leqslant  (2\gamma)/( 1- \gamma \|x-x_0\|)^3.
$$
\end{lemma}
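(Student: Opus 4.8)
The plan is to derive the bound on $\|F''(x)\|$ directly from the Smale condition \eqref{eq:SmaleCond} by a power-series manipulation, which is the classical argument underlying all $\gamma$-theory estimates. First I would fix $x \in B(x_0, 1/\gamma)$ and write the Taylor expansion of $F''$ around $x_0$: since $F$ is analytic, for every direction we have $F''(x) = \sum_{n \geq 2} \frac{F^{(n)}(x_0)}{(n-2)!}(x-x_0)^{n-2}$, regarded as a convergent series of multilinear maps. Taking norms and using the triangle inequality gives $\|F''(x)\| \leq \sum_{n \geq 2} \frac{\|F^{(n)}(x_0)\|}{(n-2)!}\|x-x_0\|^{n-2}$.

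Next I would invoke the definition of $\gamma$ in \eqref{eq:SmaleCond}, which says precisely that $\|F^{(n)}(x_0)\| \leq \gamma^{n-1} n!$ for all $n \geq 2$. Substituting this bound into the series yields
$$
\|F''(x)\| \leq \sum_{n \geq 2} \frac{\gamma^{n-1} n!}{(n-2)!}\|x-x_0\|^{n-2} = \gamma \sum_{n \geq 2} n(n-1)(\gamma\|x-x_0\|)^{n-2}.
$$
Then I would recognize the remaining sum as the second derivative of the geometric series: setting $u = \gamma\|x-x_0\| \in [0,1)$, one has $\sum_{n \geq 2} n(n-1) u^{n-2} = \left(\sum_{j \geq 0} u^j\right)'' = \left(\frac{1}{1-u}\right)'' = \frac{2}{(1-u)^3}$. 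Combining these two facts gives $\|F''(x)\| \leq \frac{2\gamma}{(1-\gamma\|x-x_0\|)^3}$, which is the claimed inequality. The convergence of all these series on $B(x_0, 1/\gamma)$ is guaranteed because $\gamma \|x - x_0\| < 1$.

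The only genuinely delicate point is making the ``norm of a multilinear map / power series'' bookkeeping precise: one should be careful that $\|F^{(n)}(x_0)\|$ denotes the operator norm of the symmetric $n$-linear form and that the termwise norm estimate for the series representing $F''(x)$ is legitimate, i.e. that the series converges absolutely in operator norm for $\|x-x_0\| < 1/\gamma$. This is standard (it follows from the same $\gamma$-bound, which forces the radius of convergence to be at least $1/\gamma$), so I would cite it as a routine fact about analytic maps between finite-dimensional spaces and keep the exposition short — in the paper this lemma is stated exactly as in Smale's work and in \cite{chon10}, so a brief proof referring to \cite{S86} suffices.
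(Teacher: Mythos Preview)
Your argument is correct and is precisely the classical $\gamma$-theory computation: expand $F''$ in its Taylor series at $x_0$, bound each coefficient via $\|F^{(n)}(x_0)\|\le n!\,\gamma^{n-1}$, and sum the resulting series as the second derivative of $1/(1-u)$. The paper does not spell out a proof either, merely referring to Lemma~21 of \cite{Max2}, where exactly this standard argument is carried out; so your proposal matches the intended approach.
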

\begin{proof}
The proof follows the same pattern as Lemma 21 from \cite{Max2}.
\end{proof}
\begin{lemma} \label{lc}
Let $F:\mathbb{R}^n \to \mathbb{R}^{m}$ be twice continuously differentiable.  If there exists  a \mbox{$f:[0,R)\to \mathbb {R}$} twice continuously differentiable and satisfying
$$
\|F''(x)\|\leqslant f''(\|x-x_0\|),
$$
for all $x\in \mathbb{R}^n$ such that  $\|x-x_0\|<R$, then $F$ and $f$ satisfy \eqref{KH}.
\end{lemma}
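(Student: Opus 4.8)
The plan is to obtain \eqref{KH} from the fundamental theorem of calculus applied to $F'$ along the segment joining $x$ and $y$, following the pattern of the analogous estimates in \cite{FS01,Max2}. Fix $x,y\in B(x_0,R)$ with $\|x-x_0\|+\|y-x\|<R$, and set $u:=\|x-x_0\|$ and $\ell:=\|y-x\|$; if $\ell=0$ there is nothing to prove, so assume $\ell>0$. Since $F$ is twice continuously differentiable,
\[
F'(y)-F'(x)=\int_0^1 F''\bigl(x+\tau(y-x)\bigr)(y-x)\,d\tau ,
\]
and therefore $\|F'(y)-F'(x)\|\le\ell\int_0^1\bigl\|F''\bigl(x+\tau(y-x)\bigr)\bigr\|\,d\tau$.

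Next I would estimate the integrand pointwise. For $\tau\in[0,1]$ put $z_\tau:=x+\tau(y-x)$. The triangle inequality gives $\|z_\tau-x_0\|\le u+\tau\ell\le u+\ell<R$, so $z_\tau$ lies in $B(x_0,R)$ and the hypothesis of the lemma applies at $z_\tau$, giving $\|F''(z_\tau)\|\le f''(\|z_\tau-x_0\|)$. Using that $f''$ is nondecreasing together with $\|z_\tau-x_0\|\le u+\tau\ell$, we obtain $\|F''(z_\tau)\|\le f''(u+\tau\ell)$. Substituting this into the previous display and changing variables by $s=u+\tau\ell$,
\[
\|F'(y)-F'(x)\|\le\ell\int_0^1 f''(u+\tau\ell)\,d\tau=\int_u^{u+\ell}f''(s)\,ds=f'(u+\ell)-f'(u),
\]
the last equality being the fundamental theorem of calculus, legitimate since $f$ is $C^2$. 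As $u+\ell=\|x-x_0\|+\|y-x\|$ and $u=\|x-x_0\|$, this is exactly \eqref{KH}.

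The only delicate point — the step I expect to need the most care — is the use of the monotonicity of $f''$ (equivalently, convexity of $f'$) in the second paragraph: it is what converts the pointwise bound $\|F''(z_\tau)\|\le f''(\|z_\tau-x_0\|)$ into the radial bound $f''(u+\tau\ell)$ whose integral over $[0,1]$ reproduces $f'(u+\ell)-f'(u)$. This is available in every application: Lemma~\ref{lc} is invoked only with $f$ a majorant function, for which $f'$ is convex by {\bf h2}, and in the Smale case one checks directly from Lemma~\ref{lemma:qc1} that $f''(t)=2\gamma/(1-\gamma t)^3$ is increasing on $[0,1/\gamma)$. Accordingly, the hypothesis of Lemma~\ref{lc} is to be read with such an $f$, since the monotonicity of $f''$ is essential for the derivation above to go through.
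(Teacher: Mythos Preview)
Your argument is correct and follows exactly the standard pattern that the paper invokes by reference (``the same pattern as Lemma~22 from \cite{Max2}''): integrate $F''$ along the segment, apply the pointwise hypothesis, pass to a radial bound via monotonicity of $f''$, and integrate back to $f'$.

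Your closing observation is well taken and worth stating plainly: as written, Lemma~\ref{lc} does not assume $f''$ is nondecreasing, yet the step $f''(\|z_\tau-x_0\|)\le f''(u+\tau\ell)$ fails without it, so the lemma is not literally true in the generality stated. You handle this correctly by noting that in every use of the lemma here $f$ is a majorant function (so $f'$ is convex by {\bf h2}), and in the Smale application $f''(t)=2\gamma/(1-\gamma t)^3$ is manifestly increasing. This is a genuine omission in the lemma's hypotheses rather than in your proof; the referenced Lemma~22 in \cite{Max2} presumably carries the needed assumption, and the statement here should be read the same way.
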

\begin{proof}
The proof follows the same pattern as Lemma 22 from \cite{Max2}.
\end{proof}

\noindent
{\bf Proof of Theorem \ref{th:kntrpsma}.}
Consider the real function   $f:[0,1/\gamma) \to \mathbb{R}$ defined by
$$
f(t)=\frac{t}{1-\gamma t}-2t.
$$
It is straightforward to show that $f$ is  analytic and that
$$
f(0)=0, \quad f'(t)=1/(1-\gamma t)^2-2, \quad f'(0)=-1, \quad
f''(t)=(2\gamma)/(1-\gamma t)^3, \quad f^{n}(0)=n!\,\gamma^{n-1},
$$
for $n\geq 2$. It follows from the last equalities that f satisfies {\bf h1} and
{\bf h2} in Definition \ref{majo}. Now, as $f''(t)=(2\gamma)/(1-\gamma t)^3,$ combining the
Lemmas \ref{lemma:qc1} and   \ref{lc}, we have   $F$ and $f$ satisfy  \eqref{KH} with
 $R=1/\gamma.$ Therefore, f is a majorant function for $F$ on $B(x_0,1/\gamma)$.
 Hence,
 $$
 f_{\xi,\alpha}(t)=\frac{\alpha\gamma}{1-\gamma t}t^2-t+\xi=\xi+(\alpha-1)t+\alpha f(t),
 $$
and, since   $  \xi\gamma\leq 1+2\alpha-2\sqrt{\alpha(1+\alpha)}$, we conclude that $ f_{ \xi, \alpha}$ satisfies {\bf h3}  and  $$t_*=\frac{1+\gamma\xi-\sqrt{(1+\gamma\xi)^2-4(1+\alpha)\gamma\xi}}{2(1+\alpha)\gamma}$$  is its smallest root.
In this case,  the constant $\alpha$ satisfies
$$
\alpha\geq \frac{\eta\beta(1-\gamma\xi)^2}{\eta\beta+(1-\eta\beta)(1-\gamma\xi)^2}= \frac{\eta\beta}{\eta\beta[f'({\xi})+1]+1} .
$$
Therefore, taking $\alpha$, $ f_{\xi,\alpha}$ and $t_*$  as defined above, all  the statements of the  first part of the theorem follow from Theorem~\ref{th:kntrp}.
For proving the second part,  it is sufficient to note that the assumption  $  \xi\gamma < 1+2\alpha-2\sqrt{\alpha(1+\alpha)}$ implies that $ f_{ \xi, \alpha}$
satisfies {\bf h4}.\qed

\subsection{Convergence result under the Robinson condition }

In this section we present a correspondent theorem to Theorem~\ref{th:knt}, namely,   we assume that  $x_0$ satisfies the Robinson condition, see  \cite{chon10} and \cite{robinson1}.   Under the Robinson condition,  we also present results of convergence for the Lipschitz and Smale conditions. We start by defining  the Robinson condition.

Let $C\subset \mathbb{R}^m$ be a nonempty closed convex cone, $F:\mathbb{R}^n \to \mathbb{R}^{m}$ be a continuously differentiable
function and $x\in \mathbb{R}^x$. Define the multifunction  $T_{x}: \mathbb{R}^n \to  P( \mathbb{R}^m)$ as
\begin{equation}\label{ro2}
T_{x}d=F'(x)d-C.
\end{equation}
The multifunction $T_{x}$ is a convex process from $\mathbb{R}^n$ to  $\mathbb{R}^m$. Convex process has been extensively studied in \cite{Rocka2,Rocka}.  As usual, the domain, norm and inverse of  $T_{x}$ are defined, respectively, by
$$
 {\cal D}(T_{x}):=\{d\in \mathbb{R}^n:T_{x}d \neq \emptyset \}, \qquad \|T_{x}\|:=sup \; \{ \|T_{x} d\|: x\in {\cal D}(T_{x}), \; \|d\| \leq 1 \},
$$
$$
T_{x}^{-1}y:=\{d\in \mathbb{R}^n:F'(x)d\in y+C\}, \qquad    \; y \in \mathbb{R}^m.
$$
where $ \|T_{x} d\|:=\inf \{\|v\|~: ~v\in T_{x } d  \}$.

The point $x_0\in \mathbb{R}^n$  satisfies the   {\it Robinson  condition}  if  the multifunction  $T_{x_0}$  carries $\mathbb{R}^n$  onto $\mathbb{R}^m$, that is,
 \begin{equation} \label{robinson}
\forall ~ y\in \mathbb{R}^m  \quad  \exists ~ d\in \mathbb{R}^n, ~ \exists ~ c\in C;  \quad y=F'(x_0)d-c.
\end{equation}
\begin{theorem}\label{th:kntcr}
Let $F:\mathbb{R}^n \to \mathbb{R}^{m}$ be a continuously differentiable
function. Assume that $R>0$, $x_0 \in \mathbb{R}^n$ and $f:[0,\; R)\to \mathbb{R}$ is a
 majorant function for $F$ on  $B(x_0,R).$ Take the constants $\alpha>0$ and
 $\xi>0$ and consider the
auxiliary function $f_{ \xi, \alpha}:[0,R)\to \mathbb{R}$,
$$
  f_{ \xi, \alpha}(t)=\xi+(\alpha-1)t+\alpha f(t).
$$
 If,  $f_{ \xi, \alpha}$ satisfies {\bf h3}, i.e., $t_*$ is the smallest zero of $f_{ \xi, \alpha},$  then   the sequence
  generated by Newton's Method  for solving $ f_{ \xi, \alpha}(t)=0,$  with starting
  point $t_{0}=0$,
$$
    t_{k+1} ={t_{k}}-f_{ \xi, \alpha}'(t_{k}) ^{-1}f_{ \xi, \alpha}(t_{k}),\quad k=0,1,\ldots\,,
$$
 is well defined, $\{t_{k}\}$ is strictly increasing, is contained in
  $[0,t_*)$, and converges $Q$-linearly to $t_*$.
Let  \(\eta \in [1,\infty)\),  \( \Delta \in (0,\infty]\) and $h:\mathbb{R}^{m}\to \mathbb{R}$  be a real-valued convex function
with minimizer set $C$ nonempty. Suppose that $C$ is a cone and $x_0 \in \mathbb{R}^n$ satisfies the Robinson condition. Let   $\beta_0=\|T_{x_0}^{-1}\|$. If    \(d(F(x_0),C) >0\), \( t_*\leq r_{\beta_0}:=\{t\in [0,R)~:~ \beta_0-1 +\beta_0f'(t)<0\}\),
 $$
 \Delta \geq \xi\geq \eta\beta_0 d(F(x_0),C), \qquad  \alpha \geq \frac{\eta\beta_0}{1+(\eta-1)\beta_0[f'(\xi)+1]},
 $$
 then the sequence generated by {\bf Algorithm \ref{algor1}}, denoted by $\{x_k\},$ is contained in $B(x_0,
  t_*)$,
$$
F(x_k)+F'(x_k)(x_{k+1}-x_{k}) \in C,  \quad k=0,1, \ldots\, ,
$$
satisfies the inequalities
$$
  \|x_{k+1}-x_{k}\|   \leq  t_{k+1}-t_{k}, \qquad \|x_{k+1}-x_{k}\|\leq   \frac{t_{k+1}-t_{k}}{(t_{k}-t_{k-1})^2} \|x_k-x_{k-1}\|^2,
$$
for \( k=0, 1, \ldots\, , \) and \( k=1,2, \ldots\, \), respectively,  converging  to a point $x_*\in B[x_0, t_*]$ such that  $F(x_*)\in C,$
$$
 \|x_*-x_{k}\|   \leq  t_*-t_{k}, \qquad k=0,1, \ldots\,
$$
and  the convergence is  $R$-linear. If, additionally,   $f_{\xi, \alpha}$ satisfies  {\bf h4}    then the sequences $\{t_k\}$ and  $\{x_k\}$ converge $Q$-quadratically  and $R$-quadratically
 to $t_*$ and $x_*$, respectively.
\end{theorem}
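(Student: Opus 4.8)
The plan is to obtain Theorem~\ref{th:kntcr} as a specialization of Theorem~\ref{th:knt}, in exactly the way Theorem~\ref{th:kntrp} was obtained from it. So the argument splits into two tasks: (i) showing that, under the Robinson condition \eqref{robinson} together with the majorant condition \eqref{KH}, the point $x_0$ is a quasi-regular point of $F(x)\in C$ whose quasi-regular radius and bound function satisfy $r_{x_0}\geq r_{\beta_0}$ and $\beta_{x_0}(t)\leq \beta_0/\bigl(1-\beta_0(f'(t)+1)\bigr)$ on $[0,r_{\beta_0})$; and (ii) checking that the hypotheses imposed here on $\Delta,\xi,\alpha,t_*$ entail the corresponding hypotheses \eqref{a11} of Theorem~\ref{th:knt}. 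Once (i) and (ii) are settled, every conclusion in the statement — containment of $\{x_k\}$ in $B(x_0,t_*)$, the inclusion \eqref{eq:001}, the estimates \eqref{eq:bd}, convergence to some $x_*\in B[x_0,t_*]$ with $F(x_*)\in C$, the bound \eqref{eq:002}, the $R$-linear rate, and the $R$-quadratic rate under {\bf h4} — is inherited verbatim from Theorem~\ref{th:knt}.

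For task (i) I would work through the calculus of convex processes. The Robinson condition says that $T_{x_0}$ from \eqref{ro2} is surjective with $\|T_{x_0}^{-1}\|=\beta_0<+\infty$. Fix $x\in B(x_0,r_{\beta_0})$. Applying \eqref{KH} to the pair $(x_0,x)$ gives $\|F'(x)-F'(x_0)\|\leq f'(\|x-x_0\|)-f'(0)=f'(\|x-x_0\|)+1$, and by the definition of $r_{\beta_0}$ this is strictly smaller than $1/\beta_0$. The Banach-type perturbation lemma for convex processes (see \cite{robinson1,Rocka}) then yields that $T_x$ is surjective and $\|T_x^{-1}\|\leq \beta_0/\bigl(1-\beta_0\|F'(x)-F'(x_0)\|\bigr)\leq \beta_0/\bigl(1-\beta_0(f'(\|x-x_0\|)+1)\bigr)$. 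Since $C$ is a closed convex cone, picking $y^{*}\in C$ nearest to $F(x)$ one has $F(x)+F'(x)d\in y^{*}+C\subseteq C$ for every $d\in T_x^{-1}(y^{*}-F(x))$, and the definition of the inverse norm furnishes such a $d$ with $\|d\|\leq\|T_x^{-1}\|\,\|y^{*}-F(x)\|=\|T_x^{-1}\|\,d(F(x),C)$; hence $D_C(x)\neq\emptyset$ and $d(0,D_C(x))\leq \beta_0\,d(F(x),C)/\bigl(1-\beta_0(f'(\|x-x_0\|)+1)\bigr)$. By {\bf h2} the map $t\mapsto \beta_0/\bigl(1-\beta_0(f'(t)+1)\bigr)$ is positive and increasing on $[0,r_{\beta_0})$, so it lies in ${\cal B}_{r_{\beta_0}}(x_0)$; therefore $r_{x_0}\geq r_{\beta_0}$, and by \eqref{eq:qrf} together with Remark~\ref{rem:1} we get $\beta_{x_0}(t)\leq \beta_0/\bigl(1-\beta_0(f'(t)+1)\bigr)$ for $t\in[0,r_{\beta_0})$; in particular $\beta_{x_0}(0)\leq\beta_0$.

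For task (ii), $t_*\leq r_{\beta_0}\leq r_{x_0}$ is immediate, and $\Delta\geq\xi\geq\eta\beta_0\,d(F(x_0),C)\geq\eta\beta_{x_0}(0)\,d(F(x_0),C)$ since $\beta_{x_0}(0)\leq\beta_0$, so the first part of \eqref{a11} holds. For the second part, fix $t\in[\xi,t_*)\subset[0,r_{\beta_0})$; the map $a\mapsto \eta a/\bigl(\eta a(f'(t)+1)+1\bigr)$ is nondecreasing in $a\geq0$, so substituting the bound on $\beta_{x_0}(t)$ and simplifying gives $\eta\beta_{x_0}(t)/\bigl(\eta\beta_{x_0}(t)(f'(t)+1)+1\bigr)\leq \eta\beta_0/\bigl(1+(\eta-1)\beta_0(f'(t)+1)\bigr)$. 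Because $f'$ is increasing and $\eta\geq1$, the right-hand side is nonincreasing in $t$, so its supremum over $\xi\leq t<t_*$ is attained at $t=\xi$ and equals $\eta\beta_0/\bigl(1+(\eta-1)\beta_0(f'(\xi)+1)\bigr)\leq\alpha$ by hypothesis. Hence the $\alpha$-condition in \eqref{a11} is met; together with $d(F(x_0),C)>0$ all hypotheses of Theorem~\ref{th:knt} hold, and invoking it concludes the proof.

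I expect the one genuinely delicate point to be task (i): the perturbation estimate $\|T_x^{-1}\|\leq\beta_0/(1-\beta_0\|F'(x)-F'(x_0)\|)$ and the passage $d(0,D_C(x))\leq\|T_x^{-1}\|\,d(F(x),C)$ are the convex-process analogues of the classical Banach lemma, and turning them into a clean argument requires care with the definitions of the norm and inverse of a convex process and with the fact that it is precisely the cone structure of $C$ that lets one return from $T_x^{-1}(y^{*}-F(x))$ to $D_C(x)$. Everything else is the same routine algebraic reduction to Theorem~\ref{th:knt} already carried out for Theorem~\ref{th:kntrp}.
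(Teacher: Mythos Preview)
Your proposal is correct and follows essentially the same approach as the paper: the paper packages your task~(i) as Lemma~\ref{robinson2} (relying on the convex-process perturbation result, Lemma~\ref{robinson210}, from \cite{robinson1}) and then carries out exactly your task~(ii), arriving at the same inequality \eqref{eq:ax1} and invoking Theorem~\ref{th:knt}. The only cosmetic differences are that the paper derives the bound on $d(0,D_C(x))$ by taking the infimum over all $c\in C$ rather than choosing the nearest point $y^*$, and that it handles the $\alpha$-inequality by first passing to reciprocals, $\eta[f'(t)+1]+1/\beta_{x_0}(t)\geq 1/\beta_0+(\eta-1)[f'(t)+1]$, instead of your monotonicity-in-$a$ argument; both routes are equivalent.
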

We need the following two results to prove the above theorem.

\begin{lemma}\label{robinson210}
Let $F:\mathbb{R}^n \to \mathbb{R}^{m}$ be a continuously differentiable
function and  $C$ a nonempty closed convex cone. Suppose that  $x_0 \in \mathbb{R}^n$ satisfies the Robinson condition.  Then
 $$
 \|T_{x_0}^{-1}\|<+\infty.
 $$
Moreover, if S is a linear transformation from $\mathbb{R}^n$ to $ \mathbb{R}^m$ such that $\|T_{x_0}^{-1}\|\|S\|< 1$, then the convex process $\bar{T}$, defined by $\bar{T}:=T_{x_0}+S$,  carries $\mathbb{R}^n$ onto $ \mathbb{R}^m$, $\|\bar{T}^{-1}\|<+\infty$ and
$$\|\bar{T}^{-1}\|\leq\frac{\|T_{x_0}^{-1}\|}{1-\|T_{x_0}^{-1}\|\|S\|}.$$
\end{lemma}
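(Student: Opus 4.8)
The plan is to exploit the classical Robinson--Ursescu open mapping theory for convex processes, essentially as developed in \cite{Rocka2,Rocka} and already used by Li and Ng in \cite{chon10}. The statement has two parts: finiteness of $\|T_{x_0}^{-1}\|$, and a Banach-lemma--type perturbation bound for $\bar T = T_{x_0}+S$. I would treat them in that order.

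For the first part, recall that $T_{x_0}d = F'(x_0)d - C$ is a closed convex process (its graph is a closed convex cone, since $C$ is a closed convex cone and $F'(x_0)$ is linear), and that the Robinson condition \eqref{robinson} says precisely that $T_{x_0}$ is surjective, i.e.\ its range is all of $\mathbb{R}^m$. The Robinson--Ursescu theorem for closed convex processes then gives that a surjective closed convex process is \emph{open at the origin}, and openness at the origin is equivalent to the inverse process $T_{x_0}^{-1}$ having finite norm; this is exactly the content of the relevant results in \cite{Rocka}. So $\|T_{x_0}^{-1}\| < +\infty$. (Alternatively, in finite dimensions one can argue directly: surjectivity plus a compactness/homogeneity argument on the unit sphere of $\mathbb{R}^m$ yields a uniform bound $\|T_{x_0}^{-1}y\|\le c\|y\|$.)

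For the second part, fix a linear $S$ with $\|T_{x_0}^{-1}\|\,\|S\| < 1$ and set $\bar T := T_{x_0}+S$, which is again a closed convex process. The key estimate is the standard iterative (Neumann-series-style) construction: given $y\in\mathbb{R}^m$, build a sequence $d_0,d_1,\dots$ with $d_0\in T_{x_0}^{-1}y$, $\|d_0\|\le \|T_{x_0}^{-1}\|\,\|y\|$, and inductively $d_{j+1}\in T_{x_0}^{-1}(-Sd_j)$ with $\|d_{j+1}\|\le \|T_{x_0}^{-1}\|\,\|S\|\,\|d_j\|$; here I use that $T_{x_0}$ is a convex process so that $T_{x_0}^{-1}$ is positively homogeneous and that one can always select an element of $T_{x_0}^{-1}(\cdot)$ with norm at most $\|T_{x_0}^{-1}\|$ times the norm of the argument. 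The partial sums $\sum_{j=0}^N d_j$ form a Cauchy sequence because $\sum_j \|d_j\| \le \|T_{x_0}^{-1}\|\,\|y\|\,\sum_j (\|T_{x_0}^{-1}\|\,\|S\|)^j < \infty$; letting $d$ be the limit, one checks (using closedness of the graph of $\bar T$ and additivity along the telescoping construction, i.e.\ $y \in T_{x_0}d_0$ and $-Sd_j \in T_{x_0}d_{j+1}$ add up to $y \in (T_{x_0}+S)d = \bar T d$) that $d \in \bar T^{-1}y$ with
$$
\|d\| \le \sum_{j=0}^\infty \|d_j\| \le \frac{\|T_{x_0}^{-1}\|}{1-\|T_{x_0}^{-1}\|\,\|S\|}\,\|y\|.
$$
Since $y$ was arbitrary, $\bar T$ is onto $\mathbb{R}^m$ and $\|\bar T^{-1}\| \le \|T_{x_0}^{-1}\|/(1-\|T_{x_0}^{-1}\|\,\|S\|)$, which is finite.

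The main obstacle is the bookkeeping in the iterative argument for the perturbation bound: one must be careful that the selections $d_{j+1}\in T_{x_0}^{-1}(-Sd_j)$ really exist (which follows from surjectivity of $T_{x_0}$, since $\mathrm{dom}\,T_{x_0}^{-1}=\mathrm{range}\,T_{x_0}=\mathbb{R}^m$) and that the limit $d$ genuinely lies in $\bar T^{-1}y$ — this last point needs the closedness of the graph of the convex process $\bar T$, which is where the closedness of the cone $C$ is used again. Everything else is a geometric series estimate. This is the standard proof of the perturbation lemma for convex processes (cf.\ \cite{Rocka2,Rocka} and its use in \cite{chon10}), so I would either cite it or reproduce this short argument.
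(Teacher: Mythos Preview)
Your sketch is correct and is essentially the classical argument due to Robinson; the paper itself does not prove this lemma but simply cites Theorem~1 of \cite{robinson1}, so you are reconstructing the proof behind that citation rather than paralleling any argument in the paper. One small point worth making explicit in your telescoping step: the passage from $y\in T_{x_0}d_0$ and $-Sd_j\in T_{x_0}d_{j+1}$ to $y+Sd_N\in \bar T\!\left(\sum_{j=0}^N d_j\right)$ uses not just closedness but the fact that the graph of $T_{x_0}$ is a convex \emph{cone} (so $T_{x_0}(d)+T_{x_0}(d')\subset T_{x_0}(d+d')$, equivalently $\sum c_j\in C$); you rely on this implicitly, and it is exactly where the hypothesis that $C$ is a cone enters.
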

\begin{proof}
See Theorem~1 on p.342 of \cite{robinson1}.
\end{proof}
\begin{lemma}\label{robinson2}
Let $F:\mathbb{R}^n \to \mathbb{R}^{m}$ be a continuously differentiable
function and let  $h:\mathbb{R}^{m}\to \mathbb{R}$  be a real-valued convex function with minimizer set $C$ nonempty. Suppose that  $x_0 \in \mathbb{R}^n$ satisfies the Robinson condition.  Then  $x_0$ is a regular point of the inclusion  $F(x)\in C$, and in particular, $x_0$ is a quasi-regular point of the inclusion  $F(x)\in C$. Moreover,  assume  $C$ is a cone,  $R>0 $ and  $f:[0,\; R)\to \mathbb{R}$  is a  majorant  function for  $F$ on  $B(x_0,R)$. Let  $\xi>0$,  $\beta_0=\|T_{x_0}^{-1}\|$, the auxiliary function $f_{ \xi, \beta_0}:[0,R)\to \mathbb{R}$,
$$
  f_{ \xi, \beta_0}(t):=\xi+(\beta_0-1)t+\beta_0 f(t),
$$
and  $r_{\beta_0}:=\sup\{t\in [0,R): f_{\xi,\beta_0}'(t)<0\}$. If  $r_{x_0}$ is the quasi-regular radius and $\beta_{x_0}(\cdot)$ is the quasi-regular bound  function  for  the quasi-regular point  $x_0$,  then
$$
r_{x_0}\geq r_{\beta_0}, \qquad \beta_{x_0}(t)\leq \frac{\beta_0}{1-\beta_0[f'(t)+1]}, \qquad \forall \;  t \in  [0, ~r_{\beta_0}).
$$
\end{lemma}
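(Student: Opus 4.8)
The plan is to derive regularity of $x_0$ from the Robinson condition by a polar-cone duality argument, and then to obtain the two quantitative estimates by applying Robinson's stability bound (Lemma~\ref{robinson210}) to a first-order perturbation of $T_{x_0}$.

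First note that $\beta_0=\|T_{x_0}^{-1}\|$ is finite (and positive) by Lemma~\ref{robinson210}. To show $x_0$ is a regular point, take $z\in Ker(F'(x_0)^T)\cap (C-F(x_0))^{o}$. Since $C$ is a closed convex cone, testing the defining inequality of $(C-F(x_0))^{o}$ against $0\in C$ and against $\lambda c$ for $c\in C$, $\lambda\to+\infty$, gives $(C-F(x_0))^{o}\subseteq C^{o}$; hence $\langle z,c\rangle\le 0$ for all $c\in C$, while $F'(x_0)^Tz=0$. Writing an arbitrary $y\in\mathbb{R}^m$ as $y=F'(x_0)d-c$ with $c\in C$ (Robinson condition), we get $\langle z,y\rangle=\langle F'(x_0)^Tz,d\rangle-\langle z,c\rangle\ge 0$; taking $y=-z$ forces $z=0$. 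Thus $x_0$ is regular, and Proposition~\ref{regular} yields quasi-regularity.

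For the quantitative estimates, fix $r\in(0,r_{\beta_0})$ and $x\in B(x_0,r)$. The key observation is that $T_x$ is a first-order perturbation of $T_{x_0}$: writing $S:=F'(x)-F'(x_0)$, one has $T_xd=F'(x)d-C=(T_{x_0}+S)d$, and the majorant condition~\eqref{KH} evaluated at $x_0$, together with {\bf h1}, bounds $\|S\|\le f'(\|x-x_0\|)+1$. Since $f_{\xi,\beta_0}'=(\beta_0-1)+\beta_0 f'$ is strictly increasing ({\bf h2}) with $f_{\xi,\beta_0}'(0)=-1$, we have $\beta_0[f'(t)+1]<1$ throughout $[0,r_{\beta_0})$, so $\|T_{x_0}^{-1}\|\,\|S\|\le\beta_0[f'(\|x-x_0\|)+1]<1$ and Lemma~\ref{robinson210} applies: $T_x$ maps $\mathbb{R}^n$ onto $\mathbb{R}^m$ and $\|T_x^{-1}\|\le\beta_0/(1-\beta_0[f'(\|x-x_0\|)+1])$. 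Surjectivity of $T_x$ together with the identity $D_C(x)=T_x^{-1}(-F(x))$ gives $D_C(x)\ne\emptyset$. To convert the operator bound into a bound on $d(0,D_C(x))$ in terms of $d(F(x),C)$, I would use the convex-process structure of $T_x^{-1}$: for any $c\in C$, $0\in T_x^{-1}(-c)$ (since $F'(x)\cdot 0=0\in -c+C$), so superadditivity gives $T_x^{-1}(c-F(x))\subseteq T_x^{-1}(-F(x))$ and hence $d(0,D_C(x))\le\|T_x^{-1}(c-F(x))\|\le\|T_x^{-1}\|\,\|c-F(x)\|$; minimizing over $c\in C$ yields $d(0,D_C(x))\le\beta(\|x-x_0\|)\,d(F(x),C)$ with $\beta(t):=\beta_0/(1-\beta_0[f'(t)+1])$.

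The function $\beta$ is positive and strictly increasing on $[0,r)$, so $\beta\in{\cal B}_r(x_0)$; since $r<r_{\beta_0}$ was arbitrary, $r_{x_0}\ge r_{\beta_0}$, and for each $t\in[0,r_{\beta_0})$, picking $r\in(t,r_{\beta_0})$ and invoking Remark~\ref{rem:1} (the left-hand limit of $\beta$ at $r$ is finite) gives $\beta_{x_0}(t)=\inf\{\gamma(t):\gamma\in{\cal B}_r(x_0)\}\le\beta(t)=\beta_0/(1-\beta_0[f'(t)+1])$, which is the asserted estimate. I expect the delicate point to be precisely this conversion from the operator bound $\|T_x^{-1}\|$ to the distance bound for $D_C(x)$ — handling $T_x^{-1}$ as a convex process (positive homogeneity to get $\|T_x^{-1}y\|\le\|T_x^{-1}\|\,\|y\|$, superadditivity, and $0\in T_x^{-1}(-c)$ for $c\in C$) rather than as an ordinary linear inverse — and that is the step I would write out in complete detail.
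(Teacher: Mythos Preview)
Your proposal is correct and follows essentially the same route as the paper: the regularity argument via the polar cone and the Robinson decomposition, then Robinson's stability bound (Lemma~\ref{robinson210}) applied to the perturbation $T_x=T_{x_0}+[F'(x)-F'(x_0)]$ with the majorant estimate $\|F'(x)-F'(x_0)\|\le f'(\|x-x_0\|)+1$. The only cosmetic difference is in the ``delicate point'' you flag: you obtain $T_x^{-1}(c-F(x))\subseteq D_C(x)$ via superadditivity of the convex process $T_x^{-1}$ together with $0\in T_x^{-1}(-c)$, whereas the paper computes directly that $d\in T_x^{-1}(c-F(x))$ forces $F'(x)d\in (c+C)-F(x)\subseteq C-F(x)$ (using that $C$ is a cone), which is the same inclusion unpacked by hand.
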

\begin{proof}
Take $y\in Ker(F'(x_0)^T)\cap(C-F(x_0))^{o}.$ Hence,
$$
0=\langle F'(x_0)^Ty,d\rangle =\langle y,F'(x_0)d\rangle, \quad  \forall~ d\in \mathbb{R}^n, \qquad  \qquad \langle y,c-F(x_0)\rangle\leq0, \quad  \forall~ c \in C.
$$
Since $x_0$ satisfies the Robinson condition,
$d\in \mathbb{R}^n$ and $c\in C$ exist, such that $-y-F(x_0)=F'(x_0)d-c$, which  combining with the above   inequalities gives
$$
\langle y,y \rangle =\langle y,c-F(x_0)-F'(x_0)d\rangle =\langle d,c-F(x_0)\rangle \leq 0.
$$
So $y=0$, and we obtain from Definition \ref{regular2} that $x_0$ is a regular point of the inclusion $F(x) \in C.$

To establish the second part, first take $x\in \mathbb{R}^n$ such that $\|x-x_0\|\leq r_{\beta_0}$. Using $f$ as a  majorant  function of  $F$ on  $B(x_0,R),$ as well as the definitions of  $\beta_0$,  $f_{ \xi, \beta_0}$ and $r_{\beta_0}$, we obtain
\begin{equation}\label{10101}
\|T_{x_0}^{-1} \| \|F'(x)-F'(x_0)\| \leq \beta_0[f'(\|x-x_0\|)-f'(0)]= f'_{ \xi, \beta_0}(\|x-x_0\|)+1<1.
\end{equation}
Using that  $x_0$  satisfies the Robinson condition and the last inequality,  it follows from Lemma~\ref{robinson210}  that the convex process
$$
T_{x}d=F'(x)d-C=T_{x_0}d+[F'(x)-F'(x_0)]d, \qquad \forall ~d\in \mathbb{R}^n,
$$
carries $\mathbb{R}^{n}$ onto $\mathbb{R}^m$ and
\begin{equation}\label{2121}
\|{T_x}^{-1}\|\leq\frac{\|T_{x_0}^{-1}\|}{1-\|T_{x_0}^{-1}\|\|F'(x)-F'(x_0)\|}\leq\frac{\beta_0}{1-\beta_0[f'(\|x-x_0\|)-f'(0)]},
\end{equation}
where the last inequality follows the definition of $\beta_0$ and \eqref{10101}. Moreover,  as $T_x$ carries $\mathbb{R}^{n}$ onto $\mathbb{R}^m$, we also have
\begin{equation}\label{ma11}
D_C(x)=\{d\in \mathbb{R}^n:F(x)+F'(x)d \in C\} \neq \emptyset, \qquad \forall \; x\in B(x_0,  r_{\beta_0}).
\end{equation}
 Now,  let $d\in T_{x}^{-1}(c-F(x)).$ Using the definition of $T_{x}^{-1}$ it follows that
$$
F'(x)d\in c-F(x)+C=C-F(x),
$$
hence we conclude that   $F(x)+F'(x)d\in C$,  which combining with the definition of $D_C(x)$ yields
$$
T_{x}^{-1}(c-F(x))\subset D_C(x).
$$
Therefore,
$$
d(0,D_C(x))\leq \|T_{x}^{-1}(c-F(x))\|\leq \|T_{x}^{-1}\|\|c-F(x)\|, \qquad \forall c\in C.
$$
The last inequality together with \eqref{2121} imply
$$
d(0,D_C(x))\leq \|T_{x}^{-1}\|d(F(x),C)\leq\frac{\beta_0}{1-\beta_0[f'(\|x-x_0\|)-f'(0)]}d(F(x),C),
$$
which combined with \eqref{ma11}, as well as definitions of $r_{x_0}$ and $\beta_{x_0} (\cdot)$ in  \eqref{eq:qrr} and \eqref{eq:qrf}, respectively,
yields the desired inequalities.
\end{proof}

\noindent
{\bf [Proof of Theorem \ref{th:kntcr}]}  Since   $x_0 \in \mathbb{R}^n$ satisfies the Robinson condition, we have from Lemma~\ref{robinson2} that
$x_0$ is a quasi-regular point of the inclusion $F(x)\in C$  with the quasi-regular radius  $r_{x_0} \geq r_{\beta_0} $. So, taking into account the assumption $ t_*\leq r_{\beta_0}$ we obtain
$$
 t_*<r_{x_0}.
 $$
 Moreover,  Lemma~\ref{robinson2}  also implies that the  quasi-regular bound function $\beta_{x_0}(\cdot)$ satisfies
\begin{equation} \label{eq:ierc}
 \beta_{x_0}(t)\leq \frac{\beta_0}{1-\beta_0[f'(t)+1]}, \qquad \forall \;  t \in  [0,~r_{\beta_0}).
 \end{equation}
Since $ \Delta \geq \xi\geq \eta\beta_0 d(F(x_0),C)$ and   the last inequality implies that   $\beta_{x_0}(0)\leq \beta_0$,  we have
$$
  \Delta \geq \xi\geq \eta\beta_{x_0}(0)d(F(x_0),C).
 $$
Now, combining  the assumptions  $0<\xi$ and $t_* \leq r_{\beta_0}$ with the  first statement in Proposition~\ref{nnn}  we conclude that  $0<\xi < t_* \leq  r_{\beta_0}$.  So,  using \eqref{eq:ierc}, $f'(0)=-1$,  $f'$ as strictly increasing and $\eta \geq1$; after simple algebraic manipulation we obtain
$$
\eta[f'(t)+1]+\frac{1}{\beta_{x_0}(t)} \geq \frac{1}{\beta_0}+(\eta-1)[f'(t)+1]\geq \frac{1}{\beta_0}+(\eta-1)[f'(\xi)+1],  \qquad \forall~ t \in  [\xi,~ t_*),
$$
or equivalently,
\begin{equation} \label{eq:ax1}
\frac{\eta\beta_0}{1+(\eta-1)\beta_0[f'(\xi)+1]} \geq \frac{ \eta\beta_{x_0}(t)} { \eta\beta_{x_0}(t)[f'(t)+1]+1},  \qquad \forall~ t \in  [\xi,~ t_*).
\end{equation}
Hence, the assumption $ \alpha \geq \eta\beta_0/[1+(\eta-1)\beta_0(f'(\xi)+1)]$ and the last inequality imply that
$$
 \alpha \geq  \sup\left\{\frac{\eta\beta_{x_0}(t)}{\eta\beta_{x_0}(t)[f'(t)+1]+1}:{\xi}\leq t < t_*\right\}.
$$
Therefore,   $F$ and $x_0$ satisfy all assumptions in Theorem~\ref{th:knt}   and so statements of the theorem follows.
\qed
\begin{remark}
Let $F:\mathbb{R}^n \to \mathbb{R}^{m}$ be a continuously differentiable
function. Assume that $x_0 \in \mathbb{R}^n$, $R>0$ and  $K>0$ exists, such that
 $$
   \|F'(y)-F'(x)\| \leq
   K\|y-x\|,\qquad x,y \; \in B(x_0,R).
$$
Note that $f:[0, R)\to \mathbb{R}$ defined by $ f(t)=Kt^{2}/2-t$ is a majorant function for the function F on $B(x_0, R)$.  In this case, it  is easy to see that    $\bf{h3}$, $\bf{h4}$ and $t_*$  in Theorem~\ref{th:kntcr} become
$$
 2\alpha  K \xi \leq1, \qquad 2\alpha  K \xi <1, \qquad t_*=(1-\sqrt{1-2\alpha K \xi}\, )/({\alpha K}),
 $$
and $\alpha$ satisfies
$$
  \qquad\alpha\geq \frac{\eta\beta_0}{1+(\eta-1) K\beta_0 \xi}.
$$
In particular, if $C=\{0\}$ and $n=m$, the Robinson condition is equivalent to the condition that $F'(x_0)^{-1}$ is non-singular.
Hence, for $\eta=1$ we obtain the semi-local convergence for the Newton method under the Lipschitz condition,  see \cite{dennis}.
\end{remark}

\begin{remark}
Let $F:\mathbb{R}^n \to \mathbb{R}^{m}$ be an analytic
function. Assume that $x_0 \in \mathbb{R}^n$  and
\begin{equation} \label{eq:SmaleCond}
  \gamma := \sup _{ n > 1 }\left\| \frac
{F^{(n)}(x_0)}{n !}\right\|^{1/(n-1)}<+\infty.
\end{equation}
Note that the real function   $f:[0,1/\gamma) \to \mathbb{R}$ defined by
$
f(t)=t/(1-\gamma t)-2t
$
is a majorant function for the function F on $B(x_0, 1/\gamma)$.   In this case, it  is easy see that    $\bf{h3}$, $\bf{h4}$ and $t_*$  in Theorem~\ref{th:kntcr} become
$$
 \xi\gamma\leq 1+2\alpha-2\sqrt{\alpha(1+\alpha)}, \qquad \xi\gamma < 1+2\alpha-2\sqrt{\alpha(1+\alpha)},
$$
$$t_*=\frac{1+\gamma\xi-\sqrt{(1+\gamma\xi)^2-4(1+\alpha)\gamma\xi}}{2(1+\alpha)\gamma},$$
and $\alpha$ satisfies
$$\alpha\geq\frac{\eta\beta_0(1-\gamma\xi)^2}{(\eta-1)\beta_0+[1-\beta_0(\eta-1)](1-\gamma\xi)^2}.$$
In particular, if $C=\{0\}$ and $n=m$, the Robinson condition is equivalent to the condition that $F'(x_0)^{-1}$ is non-singular.
Hence, for $\eta=1$ we obtain the semi-local convergence for the Newton method under the Smale condition,  see \cite{S86}.

\end{remark}
\bibliographystyle{abbrv}

\begin{thebibliography}{10}


\bibitem{burke}
J.~Burke and M.~C. Ferris.
\newblock A {G}auss-{N}ewton method for convex composite optimization.
\newblock Technical report, 1993.

\bibitem{dennis}
J.~E. Dennis, Jr. and R.~B. Schnabel.
\newblock {\em Numerical methods for unconstrained optimization and nonlinear
  equations (Classics in Applied Mathematics, 16)}.
\newblock Soc for Industrial \& Applied Math, 1996.

\bibitem{Max2}
O.~P. Ferreira, M.~L.~N. Goncalves, and P.~R. Oliveira.
\newblock Local convergence analysis of the {G}auss-{N}ewton method under a
  majorant condition.
\newblock {\em J. Complexity}, 27(1):111 -- 125, 2011.

\bibitem{FS02}
O.~P. Ferreira and B.~F. Svaiter.
\newblock Kantorovich's theorem on {N}ewton's method in riemannian manifolds.
\newblock {\em J. Complexity}, 18(1):304 -- 329, 2002.

\bibitem{FS01}
O.~P. Ferreira and B.~F. Svaiter.
\newblock Kantorovich's majorants principle for {N}ewton's method.
\newblock {\em Comput. Optim. Appl.}, 42(2):213--229, 2009.

\bibitem{Lem1}
J.-B. Hiriart-Urruty and C.~Lemar{\'e}chal.
\newblock {\em Convex analysis and minimization algorithms. {I}}, volume 305 of
  {\em Grundlehren der Mathematischen Wissenschaften [Fundamental Principles of
  Mathematical Sciences]}.
\newblock Springer-Verlag, Berlin, 1993.
\newblock Fundamentals.

\bibitem{chon10}
C.~Li and K.~F. Ng.
\newblock Majorizing functions and convergence of the {G}auss-{N}ewton method for
  convex composite optimization.
\newblock {\em SIAM J. Optim}, pages 613--642, 2007.

\bibitem{chon2002}
C.~Li and X.~Wang.
\newblock On convergence of the {G}auss-{N}ewton method for convex composite
  optimization.
\newblock {\em Math. Program.}, 91:349--356, 2002.
\newblock 10.1007/s101070100249.

\bibitem{robinson1}
S.~M. Robinson.
\newblock Extension of {N}ewton's method to nonlinear functions with values in a
  cone.
\newblock {\em Numer. Math.}, 19:341--347, 1972.
\newblock 10.1007/BF01404880.

\bibitem{Rocka2}
R.~T. Rockafellar.
\newblock {\em Monotone processes of convex and concave type}.
\newblock Memoirs of the American Mathematical Society, No. 77. American
  Mathematical Society, Providence, R.I., 1967.

\bibitem{Rocka}
R.~T. Rockafellar.
\newblock {\em Convex analysis}.
\newblock Princeton Mathematical Series, No. 28. Princeton University Press,
  Princeton, N.J., 1970.

\bibitem{S86}
S.~Smale.
\newblock {N}ewton's method estimates from data at one point.
\newblock In {\em The merging of disciplines: new directions in pure, applied,
  and computational mathematics ({L}aramie, {W}yo., 1985)}, pages 185--196.
  Springer, New York, 1986.

\bibitem{XW10}
X.~Wang.
\newblock Convergence of {N}ewton's method and uniqueness of the solution of
  equations in {B}anach space.
\newblock {\em IMA J. Numer. Anal.}, 20(1):123--134, 2000.
\end{thebibliography}

\end{document}